\newcommand{\R}{\mathbb{R}}
\newcommand{\Z}{\mathbb{Z}}
\newcommand{\modsp}{\mathcal{M}(p_1,p_2;p_0 | s)}
\newcommand{\bmodsp}{\overline{\mathcal{M}}(p_1,p_2;p_0 | s)}
 \newcommand{\M}{\mathcal{M}}
\newcommand{\w}[3]{\ensuremath{w_{{#1},{#2};{#3}}}}
\newcommand{\V}[1]{\ensuremath{C^{#1}(F)}}
\newcommand{\ev}[4]{\operatorname{ev}_{{#1},{#2};{#3}}^{#4}}
\newcommand{\Mto}[4]{\mathcal{M}_{{#1},{#2};3}(P_{#3},{#4})}
\newcommand{\Mfro}[4]{\mathcal{M}_{{#1},{#2};3}({#4},P_{#3})}
\newcommand{\Mtofro}[4]{\mathcal{M}_{{#1},{#2};3}({#3},{#4})}
\newcommand{\Mtob}[4]{\overline{\mathcal{M}}_{{#1},{#2};3}(P_{#3},{#4})}
\newcommand{\Mfrob}[4]{\overline{\mathcal{M}}_{{#1},{#2};3}({#4},P_{#3})}
\DeclareMathOperator{\ind}{ind}
\DeclareMathOperator{\crit}{Crit}
\def\TM+{T^*(\rr_+ \times M)}
\newcommand{\rr}{\ensuremath{\mathbb{R}}}
\theoremstyle{plain}
\newtheorem{thm}{Theorem}[section]
\newtheorem{cor}[thm]{Corollary}
\newtheorem{lem}[thm]{Lemma}
\newtheorem{prop}[thm]{Proposition}
\theoremstyle{definition}
\newtheorem{defn}[thm]{Definition}
\newtheorem*{ack}{Acknowledgments}
\theoremstyle{remark}
\newtheorem{rem}[thm]{Remark}
\numberwithin{equation}{section}
\newcommand{\norm}[1]{\left\lVert#1\right\rVert}
\newcommand{\dfn}[1]{{\textit {#1}}}
\newcommand{\leg}{\ensuremath{\Lambda}}
\newcommand{\rgh}[2]{\ensuremath{{GH}^{#1}(#2)}}
\newcommand{\lingf} {\mathcal F^{\operatorname{lin}}}
\newcommand{\Crit}{\operatorname{Crit}}
\begin{document}

\title[A product structure on Generating Family Cohomology]
{A product structure on Generating Family Cohomology for Legendrian Submanifolds}

\author[Z. Myer]{Ziva Myer} \address{Duke University, Durham, NC 27708} \email{zmyer@math.duke.edu} 

\begin{abstract} 
One way to obtain invariants of some Legendrian submanifolds in 1-jet spaces $J^1M$, equipped with the standard contact structure, is through the Morse theoretic technique of generating families.  This paper extends the invariant of generating family cohomology by giving it a product $\mu_2$. To define the product, moduli spaces of flow trees are constructed and shown to have the structure of a smooth manifold with corners. These spaces consist of intersecting half-infinite gradient trajectories of functions whose critical points correspond to Reeb chords of the Legendrian. This paper lays the foundation for an $A_\infty$ algebra which will show, in particular, that $\mu_2$ is associative and thus gives generating family cohomology a ring structure.
\end{abstract}

\date{\today}

\maketitle

\section{Introduction}
\label{sec:intro}
A classic contact $2n+1$-dimensional manifold is the 1-jet space of a smooth $n$-manifold $M$, $J^1M = T^*M \times \R$, equipped with the contact structure $\xi = \text{ker}(dz - \lambda)$, where $z$ is the coordinate on $\R$ and $\lambda$ is the Liouville one-form on $T^*M$.
An important  class of submanifolds in any $(2n+1)$-dimensional contact manifold are the Legendrian submanifolds:
 $n$-dimensional submanifolds $\Lambda$ such that $T_p\Lambda \subset \xi$ for all $p \in \Lambda$.
 Given a smooth function $f: M \to \R$, the 1-jet of $f$, $j^1f = \{(x, \frac{\partial{f}}{\partial{x}}, f(x))\} $, is a Legendrian submanifold of $J^1M$. 
Not all Legendrian submanifolds arise as the 1-jet of a function. Generating families are a Morse theoretical tool that encode a larger class of Legendrian submanifolds through
considering functions defined on a trivial vector bundle over $M$: 
 if a Legendrian $\Lambda \subset J^1M$ has a generating family $F:M \times \R^N \to \R$, then $\Lambda = \{ (x, \frac{\partial F}{\partial x}(x,e), F(x,e)) \} \mid 
 \frac{\partial F}{\partial e}(x,e) = 0\}$.  More background on generating families is given in Section~\ref{sec:back}.

In recent years, cohomology groups that are invariant under Legendrian isotopy have been defined for some Legendrian submanifolds in $J^1M$ through the  different techniques of
pseudoholomorphic curves and of generating families; see, for example, \cite{chv, ees:pxr, lisa:links, lisa-jill, f-r}.  In both of these constructions, the cohomology groups have an underlying cochain complex generated by the Reeb chords of the Legendrian.
 For a Legendrian $\Lambda$ in $J^1M$ with a generating family $F$, the Reeb chords are in bijective correspondence with the positive-valued critical points of a ``difference function" $w$ associated to $F$.  Thus by considering a cochain complex generated by the positive-valued critical points of $w$ and a coboundary map $\partial$ defined using the
 positive gradient flow of $w$, one can define generating family cohomology $GH^*(F)$.
Sometimes a Legendrian $\Lambda$ can have multiple, non-equivalent generating families: one then obtains an invariant of $\Lambda$
by considering  the set $\{ GH^*(F)\}$ for
all generating families of $\Lambda$.

Generating family cohomology  is an effective but not complete invariant, so a natural problem is to build further invariant algebraic structures on $GH^*(F)$. Towards this goal, we define a product structure:

\begin{thm}
Given a Legendrian $\Lambda \subset J^1M$ with a generating family $F: M \times \R^N \to \R$,  there exists a map
$$\mu_2: GH^i (F) \otimes GH^j (F) \rightarrow GH^{i+j}(F). $$
This map descends to the equivalence class $[F]$ with respect to the  operations of stabilization and fiber-preserving diffeomorphism of the generating family. 
Further, if $\Lambda^0$  has a generating family $F^0$ and $\Lambda^1$ is Legendrian isotopic to  $\Lambda^0$, then, letting $F^1$ be the associated generating family of $\Lambda^1$ and $\mu_2^i$ the associated product on $GH^*(F^i)$ for $i=0,1$, the following diagram commutes:
\begin{equation}
      \xymatrix{
       GH^i(F^0) \otimes GH^j(F^0) \ar[d]_{\cong}  \ar[r]^{ \quad\quad \mu_2^0} &GH^{i+j}(F^0) \ar[d]^{\cong} \\
       GH^i(F^1) \otimes GH^j(F^1) \ar[r]^{\quad \quad \mu_2^1} &GH^{i+j}(F^1),
      }
    \end{equation}
 where the vertical isomorphisms are induced by continuation maps constructed in Section \ref{sec:inviso}.
\end{thm}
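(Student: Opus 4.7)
The plan is to define $\mu_2$ at the cochain level by counting rigid elements of the flow-tree moduli spaces $\mathcal{M}(p_1,p_2;p_0)$ constructed in the body of the paper. For positive critical points $p_1, p_2$ of the relevant difference functions associated to $F$, I set
\[
\mu_2(p_1 \otimes p_2) \;=\; \sum_{p_0} \#\, \mathcal{M}(p_1,p_2;p_0) \cdot p_0,
\]
where the sum is over positive critical points $p_0$ for which the moduli space is zero-dimensional. This sum is finite because, by the manifold-with-corners structure proven for $\overline{\mathcal{M}}(p_1,p_2;p_0)$, the zero-dimensional moduli spaces are compact $0$-manifolds. To show $\mu_2$ descends to cohomology, I would analyze the codimension-$1$ boundary of the compactified one-dimensional moduli spaces $\overline{\mathcal{M}}(p_1,p_2;p_0)$. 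Their boundary strata decompose into three families of broken configurations, corresponding to breaking of the output trajectory or of either input trajectory; each family matches one of the three terms in the Leibniz-type relation $\partial\mu_2 = \mu_2(\partial \otimes \mathrm{id}) \pm \mu_2(\mathrm{id}\otimes \partial)$. Since the signed boundary count of a compact oriented $1$-manifold vanishes, this identity holds and $\mu_2$ descends to $GH^*(F)$.

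Next I would establish independence under the two equivalence operations on generating families. Fiber-preserving diffeomorphisms induce tautological identifications of difference functions and hence of their critical points, gradient trajectories, and flow-tree moduli spaces; the resulting cochain isomorphism intertwines the products trivially. For stabilization $F \rightsquigarrow F \oplus Q$ by a nondegenerate quadratic form $Q$, the difference functions pick up the corresponding quadratic summand, and the positive gradient flow splits off along the new fiber directions. The resulting identification of critical points extends canonically to a diffeomorphism of flow-tree moduli spaces, matching rigid elements with rigid elements and thereby identifying the two product structures at the cochain level.

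For the invariance diagram under Legendrian isotopy, I would proceed in parallel with the $\partial$-invariance arguments of Section~\ref{sec:inviso}. Given a Legendrian isotopy from $\Lambda^0$ to $\Lambda^1$ with accompanying path $\{F^t\}_{t \in [0,1]}$ of generating families, the continuation cochain maps $\Phi \colon C^*(F^0) \to C^*(F^1)$ are defined by counting rigid trajectories of an appropriate time-dependent gradient equation. To obtain the diagram on cohomology, I would construct a cochain homotopy $H$ between $\Phi \circ \mu_2^0$ and $\mu_2^1 \circ (\Phi \otimes \Phi)$ by counting rigid elements of a parametrized flow-tree moduli space whose defining functions interpolate between the data for $F^0$ and $F^1$. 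Identifying the codimension-$1$ boundary of the associated one-dimensional parametrized moduli spaces with the appropriate broken configurations then yields the identity $\Phi\mu_2^0 - \mu_2^1(\Phi \otimes \Phi) = \partial H + H\partial$, which gives the commuting square on $GH^*$.

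The main obstacle is the last step: achieving transversality for the one-parameter family of flow trees uniformly across the parameter, promoting the parametrized moduli space to a manifold with corners, and verifying that every boundary stratum contributes exactly one of the expected algebraic terms in the chain-homotopy identity. Once this analytic and combinatorial bookkeeping is in place — essentially a parametrized version of the gluing/compactness package used to define $\mu_2$ itself — the remaining steps are formal consequences of the moduli-space structure already developed in the preceding sections.
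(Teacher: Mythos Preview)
Your outline is correct and matches the paper's approach closely: define $m_2$ on cochains via counts of rigid flow trees, prove the Leibniz relation from the boundary of the compactified one-dimensional tree spaces, handle fiber-preserving diffeomorphism and stabilization by direct identification of moduli spaces, and prove isotopy invariance by a chain homotopy counting ``continuation trees'' in $P_3 \times I$. The only refinements the paper makes explicit that you glossed over are the perturbation parameter $s$ used to achieve transversality (so the spaces are $\mathcal{M}(p_1,p_2;p_0\mid s)$), working over $\mathbb{Z}_2$ rather than with signs, and using three distinct chain-level continuation maps $\Phi_{1,2;3},\Phi_{2,3;3},\Phi_{1,3;3}$ (one per extended difference function) in the chain-homotopy identity, all of which induce the same isomorphism on cohomology.
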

This theorem appears in parts throughout this paper as Corollary \ref{cor:product}, Corollary \ref{cor:fpd}, Corollary \ref{cor:stab}, and Theorem \ref{inviso}.


Defining $\mu_2$ is part of a larger project in progress to define $A_{\infty}$ structures for Legendrian/Lagrangian submanifolds with generating families. This was inspired in part by Fukaya's $A_{\infty}$ category of Lagrangian submanifolds in a symplectic manifold, an extension of Floer homology, \cite{fooo}. In a toy model of Fukaya's construction, one gets an $A_\infty$ category extending the Morse cohomology of a manifold $M$ by studying gradient flow trees of Morse functions on $M$ \cite{fukaya1993morse} \cite{morse-htpy}; in Fukaya's full construction, gradient flow trees are replaced by pseudoholomorphic curves. Rather than using pseudoholomorphic curves to capture geometric information, our approach builds off of the toy model to build an $A_\infty$ category using gradient flow trees from generating families; this involves extending the tree construction from functions on $M$ to functions defined on trivial vector bundles over $M$. There are a number of analytic challenges in this approach,
including 
the fact that the geometric information is recorded in the subcomplex of the Morse cochain complex consisting of positive valued critical points and that standard generic perturbations of  functions  used for transversality arguments are no longer
possible since these perturbations destroy the  correspondence  of critical points with the geometric information of Reeb chords.

There are a number of interesting differences between  this generating family construction and analogous pseudoholomorphic curve constructions.
Pseudoholomorphic curve constructions have built a DGA \cite{yasha:icm, egh, ees:pxr}, whose homology is a strong invariant algebraic structure for Legendrian submanifolds of arbitrary dimensions, by using infinite-dimensional analysis of PDEs. Interestingly, Morse flow trees have been shown by Ekholm \cite{ekholm:morse-flow} to be a useful tool in calculating the DGA; these flow trees differ from the ones in this paper in several ways, including that they use local functions defined from the Legendrian rather global ones and can be used for a larger class of Legendrians than studied here. In low dimensions, combinatorial methods have been used to extract invariant algebraic structures similar to those that we are interested in from the DGA \cite{chv, lenny:computable, products, bc:bilinearized, ng2015augmentations}. 
Our work gives a different approach: it is a non-combinatorial, chain level construction for Legendrians with generating families in $J^1M$, where $M$ is  $\R^n$ or a closed $n$-manifold, for arbitrary $n$. Our approach differs from pseudoholomorphic curve constructions by only using finite-dimensional analytic techniques. 

 \begin{figure}[b]\label{ytreeone}
 \label{fig:Y-tree}
\includegraphics[width=1 \textwidth]{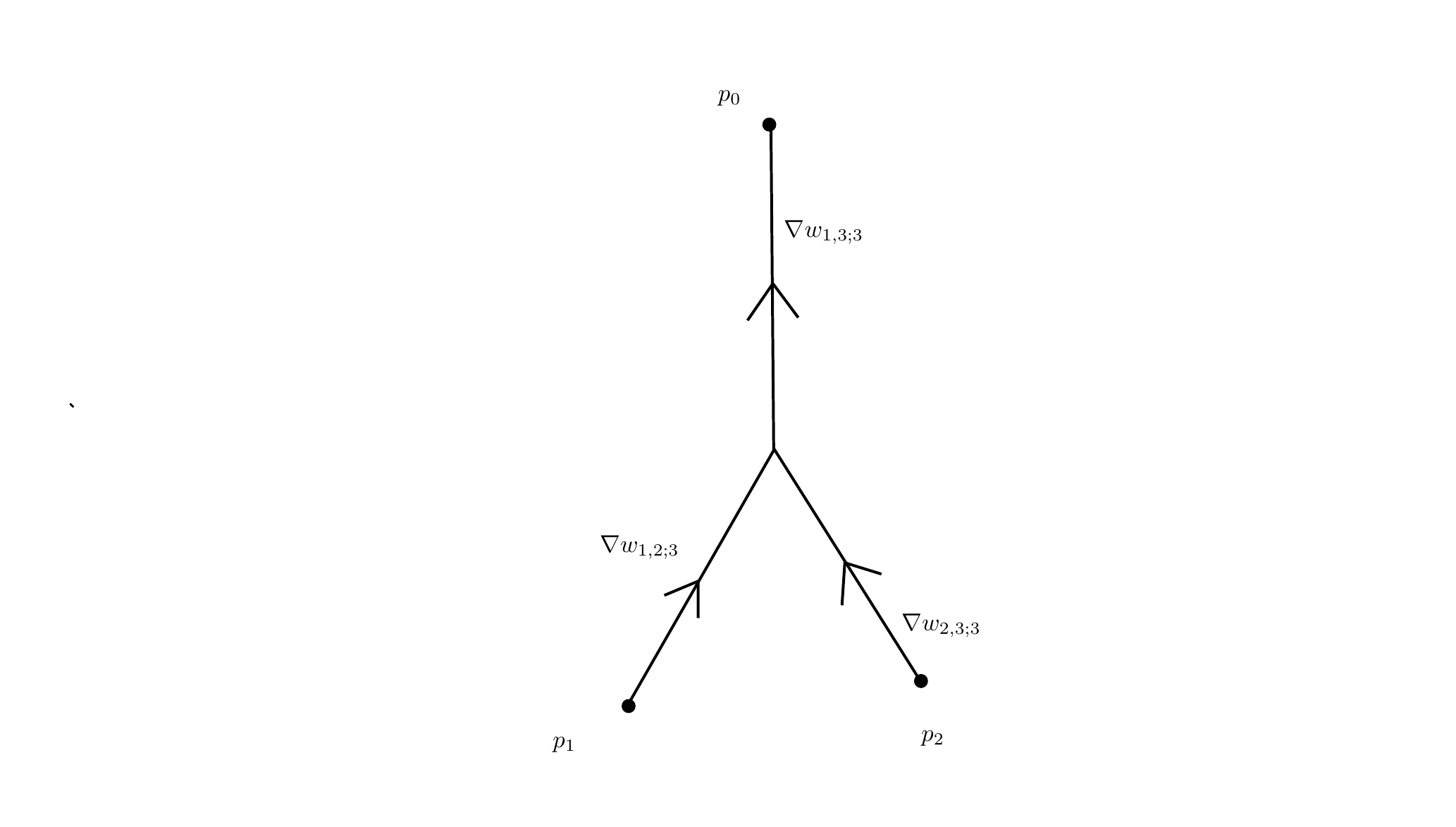} 
\caption{An element in $\mathcal{M}(p_1,p_2; p_0 | 0)$, i.e., with no perturbation, is a tree with three intersecting half-infinite trajectories that follow different quadratic stabilizations of the difference function $w$. A nonzero choice of $s$ produces a tree with ``almost" intersecting trajectories.}
\end{figure} 

The overview of the construction of the product $\mu_2$ and the layout of this paper is as follows: $\mu_2$ is defined through a count of points in a $0$-dimensional moduli space of gradient flow trees.  Namely, from $F$ (Section \ref{sec:back}), one  constructs the
difference function $w$ used to define generating family cohomology $GH^*(F)$ (Section \ref{sec:gh-legendrian}). The functions used in flow trees are three different ``quadratic-like" stabilizations of $w$, denoted by $\w{1}{2}{3}, \w{2}{3}{3}, \w{1}{3}{3}$ and defined in Section \ref{sec:extdiff}.  In Section \ref{sec:moduli}, for critical points $p_1, p_2, p_0$ of $w$, which correspond to
Reeb chords of $\Lambda$,  and a ``perturbation" parameter $s$, we construct a moduli space $\mathcal M(p_1, p_2; p_0 | s)$ of ``almost" intersecting gradient trajectories of  $\w{1}{2}{3}, \w{2}{3}{3}, \w{1}{3}{3}$; see Figure~\ref{fig:Y-tree}.
For generic choices of the $s$ parameter, $\mathcal M(p_1, p_2; p_0 | s)$ will be a manifold; 
for appropriate indices of $p_i$, $\mathcal M(p_1, p_2; p_0)$ will be $0$-dimensional and then $m_2$ is defined in Section \ref{sec:product} by a count of points in such a space.  Using the compactification of a 1-dimensional $\modsp$ from Section \ref{sec:moduli} shows that $m_2$ is a cochain map and thus descends to a map $\mu_2$ on $GH^*(F)$.
In Section \ref{secinvar}, we show that $\mu_2$ is invariant under equivalences of generating families from stabilization and fiber-preserving diffeomorphism. Lastly, we show in Section \ref{sec:inviso} that $\mu_2$ is invariant under Legendrian isotopy of $\Lambda$.
%

\begin{ack}
The author cannot thank Lisa Traynor, her graduate advisor, enough for guidance and countless conversations throughout this project. The author would also like to acknowledge Joshua Sabloff, Katrin Wehrheim, Dan Rutherford, and Lenny Ng for helpful discussions at various stages of this project. Lastly, the author thanks Bryn Mawr College for the supportive graduate school environment it provided to her while completing the majority of the research in this paper.
\end{ack}

\section{Background}
\label{sec:back}

In this section, we give some basic background on Morse Theory, generating families, and manifolds with corners.

\subsection{Morse Theory Basics}
To set notation, we recall a few facts from Morse Theory; see, for example, \cite{morse1934calculus, milnor:morse, schwarz, witten:morse, hutchings2002lecture} for more details. Let $X$ be closed manifold and let $f:X \to \R$ be a Morse function, i.e., a smooth function with nondegenerate critical points. We will relax the condition that $X$ is closed in future sections by using a function with taming properties outside a compact set. Given a critical point $p \in \crit(f)$, the \textit{Morse index} $\operatorname{ind}_f(p) \in \Z^{\geq0}$ is the dimension of the negative eigenspace of the Hessian $\operatorname{D}^2f(p)$. 

To study how a Morse function gives topological information, we pick an auxiliary Riemannian metric $g$ and study flow lines of $\nabla_gf$. For the purposes of this paper, we use \textit{positive} gradient flow. Let $\psi: \R \times X \to X$ denote the flow of this vector field and define the \textit{stable and unstable manifolds} for $p \in \crit(f)$ as
$$W_p^-(f) = \{ x \in X \mid \lim_{t \to -\infty} \psi_t(x) = p\} \quad \quad W_p^+(f) = \{ x \in X \mid \lim_{t \to \infty} \psi_t(x) = p\}. $$
These are smooth manifolds. Since we are using positive gradient flow, $W_p^-(f)$ is of dimension $\operatorname{coind}(p)$ while $W_p^+(f)$ is of dimension $\operatorname{ind}(p)$. The pair $(f,g)$ is called \textit{Morse-Smale} if $W_p^-(f) \pitchfork W_p^+(f)$, for all $p \in \Crit(f)$.

\subsection{Generating Family Background}
\label{ssec:gf-back}

In this subsection we review some background of generating families for Legendrian submanifolds that will be used in subsequent sections.  The main idea behind generating families stems from the fact that the 1-jet of a function $f: M \to \R$ is
a Legendrian submanifold of $J^1M$.  Generating families extend this idea to a larger class of Legendrian submanifolds by enlarging the domain of the function to the trivial vector bundle $M \times \R^N$ for some potentially large $N$ and considering the 1-jet over  a ``fiber-critical" submanifold of the domain.  We will denote the fiber coordinates on $\R^N$ by $e = (e_1, \ldots, e_N)$.  In this paper, $M$ will either be $\R^n$ or a closed $n$-manifold. This subsection contains only core definitions to set notation; for more information, see, for example, \cite{theret:viterbo, lisa:links, viterbo:generating}.

Given a function $F: M^n \times \R^N \to \R$ that is smooth and has $0$ as a regular value of $\partial_{e} F: M \times \R^N \to \R^N,$  define the \dfn{fiber critical submanifold} of $F$ to be the $n$-dimensional submanifold $\Sigma_F = (\partial_\eta F)^{-1}(0)$ and define the immersion $j_F: \Sigma_F \to J^1M$ in local coordinates by: 
$$
  j_F(x,e) = (x, \partial_x F(x,e), F(x,e)).
$$
The image $\leg$ of $j_F$ is an immersed Legendrian submanifold in $J^1M$ and for this setup we say that $F$ is a
\dfn{generating family} for $\leg$. 

\begin{defn}
Two generating families $F_i: M \times \R^{N_i} \to
\R$, $i= 0, 1$, are said to be \dfn{equivalent}, denoted $F_0 \sim F_1$, if they can be made equal through applying fiber-preserving diffeomorphisms and
stabilizations. These operations are defined as follows:
\begin{itemize}
\item Given $F: M \times \R^N \to \R$ and a  nondegenerate quadratic function $Q:\R^K \to \R$, define $F
  \oplus Q: M \times \R^N \times \R^K \to \R$ by $F \oplus Q(x,
  e, e') = F(x, e) + Q(e')$. Then  we say $F \oplus Q$ is a \dfn{stabilization} of $F$ (of dimension $K$).
\item For $F: M \times \R^N \to \R$, a fiber-preserving
  diffeomorphism is a map $\Phi: M \times \R^N \to M \times \R^N$ such that $\Phi(x,e) = (x, \phi_x(e))$ where $\phi_x$ is a smooth family of 
  diffeomorphisms on $\R^N$. Then $F \circ \Phi$ is said to be
  obtained from $F$ by precomposition with a \dfn{fiber-preserving diffeomorphism}.
\end{itemize}
Given a generating family $F$, we use $[F]$ to denote the equivalence class of $F$ with
respect to stabilization and fiber-preserving diffeomorphism.
\end{defn}

Using the above definitions, it is not hard to show that if $F: M \times \R^N \to \R$ is a generating family for a Legendrian $\leg$, then any $\widehat{F} \in [F]$ will also be a generating family for $\leg$. If a Legendrian submanifold has a generating family, it will always have an infinite number of generating families, but the set of equivalence classes may be finite. See the beginning of Subsection \ref{ssec:rghLeginv} for a brief discussion of results on this topic.

Having a generating family to work with will allow us to use concepts from Morse homology, as explained in the following section. As the domain of our functions are non-compact, we impose the following ``tameness" property on our generating families:

\begin{defn} \label{defn:li} A function $f: M \times \R^N \to \R$ is said to be \dfn{linear-at-infinity} if $f$ can be expressed as
$$f(x, e) = f^c(x, e) + A(e),$$
for a function $f^c:M \times \R^N \to \R$ with compact support and a non-zero linear function $A: \R^N \to \R$. 
\end{defn}

\begin{rem}\label{namingcompact}
As $M$ is a closed manifold or $\R^n$, we may assume that the compact set in Definition \ref{defn:li} is of the form $K_M \times K_E \subset M \times \R^N$ where $K_M = M$ if $M$ is closed or $K_M$ and  $K_E$ are compact Euclidean subsets.
\end{rem}

The linear-at-infinity condition is useful for studying compact Legendrians when $M= \R^n$, such as in \cite{f-r, lisa-jill}. The definition of linear-at-infinity is not preserved under stabilization of the generating family. However, we have:

\begin{lem}[\cite{josh-lisa:obstr}] \label{lem:lq-l} If $F$ is a stabilization of a
  linear-at-infinity generating family, then $F$ is equivalent to a
  linear-at-infinity generating family.
\end{lem}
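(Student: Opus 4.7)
The plan is to verify the conclusion directly from the definition of the equivalence relation. By hypothesis there exist a linear-at-infinity generating family $F_0 \colon M \times \R^N \to \R$ and a nondegenerate quadratic $Q \colon \R^K \to \R$ with $F = F_0 \oplus Q$ on $M \times \R^{N+K}$. The relation $\sim$ is defined in the preceding paragraph so as to include stabilization among its generating operations, and the passage from $F_0$ to $F$ is by construction a single stabilization step. Therefore $F \sim F_0$. Since $F_0$ is itself linear-at-infinity by assumption, this exhibits $F_0$ as a linear-at-infinity generating family in the equivalence class $[F]$, which is exactly the conclusion of the lemma.

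The only conceptual point that needs to be made explicit is that the definition of $\sim$ is permissive enough to compare generating families whose fibers have different dimensions: here $F_0$ lives over $M \times \R^N$ while $F$ lives over $M \times \R^{N+K}$. Once that flexibility is acknowledged, the lemma follows with no analytic work; no subtle Morse-theoretic or tameness argument is required, and in particular there is no need to re-prove that $F$ generates the same Legendrian as $F_0$, since that was already recorded as a consequence of stabilization.

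I would, however, flag one strengthening that could make the statement genuinely harder and would change the shape of the proof considerably: if one insisted on a linear-at-infinity representative of $[F]$ defined on the \emph{same} domain $M \times \R^{N+K}$ as $F$, then a real construction is needed. The natural strategy would be to use the nonzero linear function $A$ in the decomposition $F_0 = F_0^c + A$ as a sink for the quadratic growth of $Q$: choose $\vec w \in \R^N$ with $A(\vec w) = 1$ and attempt a fiber-preserving diffeomorphism of the form $(x, e, e') \mapsto (x, e - \chi(e') Q(e') \vec w, e')$ so that outside a compact set in $e'$ the $A$-term absorbs the $Q(e')$ term, leaving behind only $A(e)$. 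The main obstacle for that naive attempt is that the shift drags the support of $F_0^c$ along an unbounded locus in $(e,e')$, destroying the compact support of the nonlinear piece; one would need to introduce further auxiliary stabilizations and a second fiber-preserving diffeomorphism to recompactify. That route is what I would expect to find in the Sabloff--Traynor reference cited, but the statement of the lemma as written above is satisfied by the one-line definitional argument.
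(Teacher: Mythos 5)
Your one-line argument does prove the lemma exactly as it is stated here: with equivalence defined as in Subsection~\ref{ssec:gf-back} (families may be made equal by applying stabilizations and fiber-preserving diffeomorphisms, with fiber dimensions allowed to differ), $F = F_0 \oplus Q$ is by definition equivalent to $F_0$, and $F_0$ is linear-at-infinity, so the conclusion follows with no further work. Keep in mind, though, that this paper gives no proof of the lemma at all --- it imports it from \cite{josh-lisa:obstr} --- and the imported result has more content than your reading extracts. In the cited source, and in the way the lemma is actually invoked later (in Subsection~\ref{ssec:rghLeginv}, where generating families that are merely linear-\emph{quadratic}-at-infinity in the sense of \cite{lisa-jill} are asserted to be equivalent to linear-at-infinity ones), the hypothesis is a condition on behavior outside a compact set --- the function agrees with $A(e) + Q(e')$ at infinity --- rather than a global splitting $F = F_0 \oplus Q$. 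Under that weaker hypothesis there is no ready-made linear-at-infinity representative sitting in the equivalence class, and the real work is precisely the construction you sketch in your final paragraph: absorbing $Q(e')$ into the linear term via a fiber-preserving shift $(x,e,e') \mapsto (x, e - Q(e')\vec w, e')$, and then repairing the fact that this shift drags the compactly supported nonlinear piece over an unbounded locus, which requires further stabilization and cut-off diffeomorphisms to restore compact support of the nonlinear part. So what you flag as an optional strengthening is in fact the substance of the cited lemma and is what the paper implicitly relies on when it applies the statement to linear-quadratic-at-infinity families; your definitional argument is complete for the literal statement only because the hypothesis here is phrased as a global stabilization, which makes that version nearly tautological relative to the definition of $\sim$.
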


\subsection{Smooth Manifolds with Corners}

Many proofs of theorems in the Section \ref{sec:moduli} use differential topology applied to smooth manifolds with corners, which we review in this section. 

\begin{defn}
A \dfn{smooth manifold with corners} of dimension $n \in \mathbb{N}$ is a second-countable, Hausdorff space $X$ equipped with a maximal atlas of charts $\left\lbrace \phi :X \supset U \to V \subset [0, \infty )^n \right\rbrace  $ whose transition maps are smooth. The \dfn{$\ell$-stratum $X_{\ell}$} is the set of points $x \in X$ such that $\phi(x)$ has $\ell$ components equal to 0.
\end{defn}

\begin{lem}\label{cornerproducts}
If $X$ and $Y$ are smooth manifolds with corners of dimensions $m_1$ and $m_2$, respectively, then $X \times Y$ is a smooth manifold with corners of dimension $m_1 + m_2$ and $$(X \times Y)_i = \bigsqcup_{j+k=i} X_j \times Y_k.$$
\end{lem}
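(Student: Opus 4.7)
The plan is to construct a product atlas explicitly and then check that the corner stratification behaves as claimed. First I would take smooth charts $\phi : U \to V \subset [0,\infty)^{m_1}$ on $X$ and $\psi : U' \to V' \subset [0,\infty)^{m_2}$ on $Y$, and form the product chart
\[
\phi \times \psi \colon U \times U' \longrightarrow V \times V' \subset [0,\infty)^{m_1} \times [0,\infty)^{m_2} \cong [0,\infty)^{m_1+m_2}.
\]
Since second-countability and Hausdorffness are preserved under finite products, and since the transition map of a product chart is the product of the transition maps of the factors (hence smooth whenever each factor is), these charts assemble into a smooth atlas with corners of dimension $m_1+m_2$ on $X \times Y$.

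For the stratification, I would work pointwise: for $(x,y)\in X\times Y$, pick charts as above with $x\in U$ and $y\in U'$. By definition, $\phi(x)$ has exactly $j$ zero components precisely when $x\in X_j$, and similarly $\psi(y)$ has exactly $k$ zero components precisely when $y\in Y_k$. Under the identification $[0,\infty)^{m_1}\times[0,\infty)^{m_2}\cong[0,\infty)^{m_1+m_2}$, the image $(\phi\times\psi)(x,y)$ then has exactly $j+k$ zero components, so $(x,y)\in (X\times Y)_{j+k}$. This gives both inclusions
\[
(X\times Y)_i \;\supseteq\; \bigsqcup_{j+k=i} X_j \times Y_k \quad \text{and} \quad (X\times Y)_i \;\subseteq\; \bigsqcup_{j+k=i} X_j \times Y_k,
\]
and the union is disjoint because the two strata decompositions of $X$ and $Y$ are themselves disjoint.

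The only subtle point is verifying that the stratum $(X\times Y)_i$ is intrinsically defined, i.e., independent of the choice of product chart used at $(x,y)$. This is not actually special to products: it is the standard fact that for a smooth manifold with corners, the number of vanishing boundary coordinates at a point is a chart-independent invariant, because a smooth map between relatively open subsets of $[0,\infty)^n$ that is a local diffeomorphism must permute the coordinate hyperplanes locally and hence preserve the number of zero coordinates. I would cite this once and then apply it to the product atlas; with it in hand, the decomposition above is well-defined and the lemma follows. The main (very mild) obstacle is just bookkeeping the identification $[0,\infty)^{m_1}\times[0,\infty)^{m_2}\cong[0,\infty)^{m_1+m_2}$ carefully enough that the count of zero components matches up with the stratum index.
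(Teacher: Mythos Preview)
Your proposal is correct and is the standard argument. Note, however, that the paper does not actually prove this lemma: it is stated without proof as an elementary fact about manifolds with corners, so there is nothing to compare against. Your write-up would serve perfectly well as the omitted proof.
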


%

There is a natural way to understand smooth maps and derivatives on manifolds with corners. For our purposes, understanding maps from a smooth manifold with corners to a smooth manifold without boundary or corners will suffice. Consider a smooth map $f: U \subset [0, \infty )^n \to \R^\ell$ for some $n, \ell$. If $u \in U$ has no coordinates equal to 0, then $df_u$ is our usual notion of derivative. However, if $u$ has some 0 coordinates, the smoothness of $f$ implies that we may extend $f$ to a smooth map $\tilde{f}$ on a neighborhood of $u$ in $\R^n$. We define $df_u$ to be the usual derivative $d \tilde{f}_u: \R^n \to \R^\ell$. This will not depend on local extension of $f$. 

With this observation, given a manifold with corners $X$, we may define the tangent space $T_xX$ at $x \in X$ to be the image of the derivative of any local parametrization about $x$. Given a map defined on a manifold with corners $X$, let $\partial_i f: X_i \to Y$ denoted the restriction of $f$ to the stratum $X_i$. Then $T_xX_i$ is a linear subspace of $T_xX$ of codimension $i$ and $d(\partial_if)_x = df_x|_{T_xX_i} $. 

We will make use of the following natural extension of classic differential topology theorems of manifolds with boundary to manifolds with corners. We must impose extra transversality conditions on the strata to achieve these results.

The following is a known result (see, for example, \cite{nielsen1982transversality}). 

\begin{thm}\label{preimage}
Let $f$ be a smooth map of a manifold $X$ with corners onto a boundaryless manifold $Y$, and suppose $f: X \to Y$ and $\partial_i f: X_i \to Y$ are transversal to a boundaryless submanifold $Z \subset Y$ for all strata $X_i$ of $X$. Then the preimage $f^{-1}(Z)$ is a manifold with corners with $i$-stratum $f^{-1}(Z)_i = f^{-1}(Z) \cap X_i$ and the codimension of $f^{-1}(Z)$ in $X$ equals the codimension of $Z$ in $Y$.
\end{thm}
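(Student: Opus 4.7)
The plan is to reduce to a local model using a chart, extend $f$ smoothly across the corner to apply the classical preimage theorem in an ambient open neighborhood, and then use the stratum-wise transversality hypotheses to verify that the resulting preimage meets the corner structure in the standard way.

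First I would localize. Fix $x \in f^{-1}(Z) \cap X_i$ and choose a chart $\phi \colon U \to V \subset [0,\infty)^n$ with $\phi(x) = 0$, so that $X_i$ corresponds locally to $\{u_1 = \cdots = u_i = 0, \, u_{i+1},\ldots,u_n \in \R\}$ and the other strata near $x$ correspond to setting various subsets of $\{u_1,\ldots,u_i\}$ equal to $0$. Using the convention for smooth maps on corners described just above the theorem statement, extend $f \circ \phi^{-1}$ to a smooth map $\tilde f \colon \tilde V \to Y$ defined on an open neighborhood $\tilde V$ of $0$ in $\R^n$.

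Second, I would apply the classical preimage theorem to $\tilde f$. By assumption $f$ is transverse to $Z$ at $x$, so $\tilde f$ is transverse to $Z$ at $0 \in \tilde V$ as well (the derivative is the same). The standard preimage theorem then gives that $\tilde f^{-1}(Z) \cap \tilde V$ is a smooth submanifold of $\R^n$ of codimension equal to $\operatorname{codim}_Y(Z)$ near $0$. Call this local submanifold $W$. The candidate for $f^{-1}(Z)$ near $x$ is then $\phi^{-1}(W \cap [0,\infty)^n)$, and the codimension claim will follow once the corner structure is established.

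Third, I would use the stratum transversality to see that $W$ meets each coordinate hyperplane $H_J = \{u_j = 0 : j \in J\}$ (for $J \subset \{1,\ldots,i\}$) transversely inside $\R^n$. The hypothesis that $\partial_j f$ is transverse to $Z$ for every stratum $X_j$ is equivalent, after passing through the chart $\phi$ and recalling that $d(\partial_j f)_x = df_x|_{T_x X_j}$, to the statement that $\tilde f|_{H_J}$ is transverse to $Z$. By the standard transversality-of-intersections argument, this forces $W$ and $H_J$ to intersect transversely in $\R^n$. Consequently $W \cap H_J$ is a submanifold of $W$ of codimension $|J|$, and the collection $\{W \cap H_J\}$ organizes $W \cap [0,\infty)^n$ into a smooth manifold with corners whose $|J|$-stratum is $W \cap H_J \cap (0,\infty)^{\{1,\ldots,i\}\setminus J} \times \R^{n-i}$. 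Pulling back through $\phi$ gives the claimed stratification $f^{-1}(Z)_i = f^{-1}(Z) \cap X_i$. Finally, to produce charts into $[0,\infty)^k$ for $f^{-1}(Z)$ near $x$, I would combine a local diffeomorphism of $\R^n$ that straightens $W$ (which exists by the classical preimage theorem) with the fact that such a straightening can be chosen to preserve each hyperplane $H_J$, using the transverse intersections of $W$ with the $H_J$; this is the standard procedure for producing corner charts from a submanifold transverse to all coordinate faces.

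The main obstacle will be verifying the last point, that the local straightening diffeomorphism can be chosen to simultaneously preserve all relevant coordinate hyperplanes; without this, one obtains a topological manifold with corners but not a smooth chart. I would address this by working one coordinate at a time: since $W \pitchfork H_{\{j\}}$ in $\R^n$, one can choose coordinates $(v_1,\ldots,v_n)$ near $0$ in which $W = \{v_1 = \cdots = v_c = 0\}$ (with $c = \operatorname{codim}Z$) and each $H_{\{j\}}$ remains a coordinate hyperplane $\{v_{n-i+j} = 0\}$. Producing such coordinates amounts to a standard straightening lemma for a submanifold in general position with a family of transversely intersecting hyperplanes, and once established it yields a chart of $f^{-1}(Z)$ into $[0,\infty)^i \times \R^{n-i-c}$ of the required form, completing the proof.
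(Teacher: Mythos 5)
The paper itself does not prove this statement: it is quoted as a known result with a pointer to \cite{nielsen1982transversality}, so there is no in-paper argument to compare yours against. Your proof is the standard local argument, and in outline it is correct: extend $f$ across the corner in a chart, apply the classical preimage theorem to get a local submanifold $W=\tilde f^{-1}(Z)$ of the right codimension, use the stratum hypotheses to get $W\pitchfork H_J$, and then straighten compatibly with the coordinate faces. One small point worth making explicit: at the center point $x\in X_i$ the lower strata $X_\ell$, $\ell<i$, do not contain $x$, so the transversality of $\tilde f|_{H_J}$ \emph{at} $0$ for $|J|<i$ is not literally the hypothesis on $X_{|J|}$; it follows from the hypothesis on the stratum $X_i$ actually containing $x$, since $T_0H_{\{1,\dots,i\}}\subset T_0H_J$, together with openness of transversality for nearby points of the shallower strata.

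The one step I would tighten is your final straightening argument, where you justify the simultaneous normalization only from $W\pitchfork H_{\{j\}}$ for the individual hyperplanes. That hypothesis alone is not sufficient: for $W=\{u_1=u_2\}$ in a neighborhood of $0$ in $[0,\infty)^2$, $W$ is transverse to each axis, yet $W\cap[0,\infty)^2$ is a ray whose endpoint sits in the $2$-stratum, so the claimed stratification $f^{-1}(Z)_\ell=f^{-1}(Z)\cap X_\ell$ is impossible (consistently, the deepest-stratum hypothesis fails for $f(u)=u_1-u_2$, $Z=\{0\}$). What you actually need --- and have already established in your third paragraph --- is transversality of $W$ to the full intersection $H_{\{1,\dots,i\}}$. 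Given that, no coordinate-by-coordinate iteration is needed: write $Z=\pi^{-1}(0)$ for a local submersion $\pi$ defined near $f(x)$ with values in $\R^c$, and set $g=\pi\circ\tilde f$. Transversality of $\partial_i f$ at $x$ says exactly that $dg_0$ is already surjective on $T_0H_{\{1,\dots,i\}}=\operatorname{span}(e_{i+1},\dots,e_n)$, so one may replace $c$ of the coordinates $u_{i+1},\dots,u_n$ by the components of $g$ to obtain a local diffeomorphism that fixes $u_1,\dots,u_i$. In these coordinates $W$ is a coordinate subspace, the corner $\{u_1\ge0,\dots,u_i\ge0\}$ is preserved, and the corner chart, the stratification statement, and the codimension claim all follow at once; this replaces the ``standard straightening lemma'' you invoke and closes the only soft spot in the write-up.
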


The following is a generalization of the Transversality Theorem for manifolds with boundary (see, for example Section $2.3$ in \cite{g-p}) to manifolds with corners.

\begin{thm}\label{transcorners}
Suppose $F:X \times S \to Y$ is a a smooth map, where $X$ is a manifold with corners and $S$ and $Y$ are boundaryless manifolds. Let $Z$ be a boundaryless submanifold of $Y$. Suppose $F:X \times S \to Y$ is transversal to Z and $\partial_i F :X_i \times S \to Y$ is transversal to $Z$ for all i-strata $X_i$ of $X$. Then for almost every $s\in S$, $f_s$ and $\partial_i f_s$ are transversal to $Z$ for each $i$-stratum $X_i$, where $f_s(x) = F(x,s)$ and $\partial_i f_s = f_s|_{X_i}$.
\end{thm}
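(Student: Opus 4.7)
The plan is to reduce to the standard parametric transversality theorem by stratifying $X$ and applying Sard's theorem on each stratum separately. First, for each $i$, the product $X_i \times S$ is a boundaryless smooth manifold (the strata $X_i$ are boundaryless by definition, and $S$ is by hypothesis), and by assumption $\partial_i F : X_i \times S \to Y$ is transversal to the boundaryless submanifold $Z \subset Y$. The classical preimage theorem (a special case of Theorem \ref{preimage}) therefore makes
\[
W_i := (\partial_i F)^{-1}(Z) \;\subset\; X_i \times S
\]
into a smooth boundaryless submanifold of codimension equal to $\operatorname{codim}_Y Z$.

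Next, I would consider the natural projection $\pi_i : W_i \to S$. Since $W_i$ and $S$ are boundaryless, Sard's theorem furnishes a measure-zero set $\Sigma_i \subset S$ of critical values of $\pi_i$. Because $X$ has dimension $n$, it has only the finitely many strata $X_0, X_1, \ldots, X_n$, so $\Sigma := \bigcup_{i=0}^{n} \Sigma_i$ remains of measure zero in $S$ and its complement $S^*$ has full measure.

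The core step is to verify that every $s \in S^*$ satisfies the conclusion. I would run the classical ``regular value equals transversality'' argument separately on each stratum. Fix $i$, a point $x \in X_i$ with $f_s(x) \in Z$, and an arbitrary $y \in T_{f_s(x)} Y$. Transversality of $\partial_i F$ produces $(v,w) \in T_x X_i \oplus T_s S$ and $z \in T_{f_s(x)} Z$ with $d(\partial_i F)_{(x,s)}(v,w) + z = y$. Regularity of $\pi_i$ at $(x,s)$ then yields $v'' \in T_x X_i$ with $(v'', w) \in T_{(x,s)} W_i$, i.e., $z' := d(\partial_i F)_{(x,s)}(v'',w) \in T_{f_s(x)} Z$. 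Subtracting gives $d(\partial_i f_s)_x(v-v'') + (z - z') = y$, proving $\partial_i f_s \pitchfork Z$ at $x$. Since this holds for every stratum $X_i$ and $T_x X_i$ sits inside $T_x X$ at corner points of codimension $i$, full transversality $f_s \pitchfork Z$ also follows.

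The main obstacle is bookkeeping rather than analysis: one has to be careful with the identification $d(\partial_i F)_{(x,s)} = dF_{(x,s)}|_{T_x X_i \oplus T_s S}$ from the excerpt's discussion of derivatives at corner points, and with the fact that $W$ itself is only a manifold with corners, so Sard must be applied to each boundaryless stratum $W_i$ rather than to $W$ as a whole. Once these identifications are in hand, the proof is a direct combination of Theorem \ref{preimage}, Sard's theorem on each $\pi_i$, and the standard algebraic equivalence described above.
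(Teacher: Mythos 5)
Your proposal is correct and is essentially the argument the paper intends: the paper gives no written proof but cites the Guillemin--Pollack parametric transversality theorem, and your stratum-by-stratum reduction (apply the preimage theorem to each boundaryless $X_i \times S$, Sard to each projection $\pi_i : W_i \to S$, take the finite union of measure-zero critical sets, then run the standard regular-value-implies-transversality computation, noting $d(\partial_i f_s)_x = df_s|_{T_xX_i}$ so stratum transversality gives $f_s \pitchfork Z$) is exactly that standard proof adapted to corners. The only blemish is the sign in your final identity, which should read $y = d(\partial_i f_s)_x(v-v'') + z + z'$ rather than $z - z'$, an inconsequential slip since $T_{f_s(x)}Z$ is a linear subspace.
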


\section{A Flow Line Approach to Generating Family Cohomology}
\label{sec:gh-legendrian}

In this section we use gradient flow lines of a certain function (called a difference function and defined below in Equation (\ref{diff})) associated to a generating family in order to define generating family cohomological invariants of Legendrians. This setup differs from past formulations of (co)homology for generating families defined using the singular (co)homology of sublevel sets as in \cite{f-r, lisa-jill, lisa:links}, for example.

\subsection{Setup of $GH^*(F)$}

Given a generating family $F: M \times \R^N \to \R$ for a Legendrian $\leg \subset J^1M$, the \dfn{ difference function} of $F$, $w: M \times \R^N \times \R^N \to \R$, is defined by: \begin{equation}\label{diff} w(x,e,e') = F(x,e) - F(x,e').  \end{equation}
 
The difference function is important because its critical points
contain information about the \dfn{Reeb chords} of $\leg$ (with respect to the standard contact form on $J^1M$), which for our purposes are segments $\gamma: [a,b] \to J^1M$ parallel to the positive $z$-axis whose endpoints are on $\leg$. 
  For the following proposition, let $\ell(\gamma) > 0$ be the length of the Reeb chord $\gamma$, as measured along the $z$-axis.

\begin{prop}[\cite{f-r, josh-lisa:cap}] 
  \label{prop:leg-crit-point} 
  The difference function $w$ has two types of critical points:
  \begin{enumerate}
  \item\label{isolatedcrits} For each Reeb chord $\gamma$ of $\leg$, there exist exactly two
    critical points $(x,e,e')$ and $(x,e',e)$ of $w$ with
    nonzero critical values $\pm \ell(\gamma)$.
  \item The set
    $$\left\{ (x,e,e) \, \mid \, (x,e) \in \Sigma_F\right\}$$
    is a critical submanifold of $w$ and has critical value $0$.
  \end{enumerate}
  For generic $F$, these critical points and submanifold are
  nondegenerate, and the critical submanifold is of index $N$.
\end{prop}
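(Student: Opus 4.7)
The plan is to identify the critical points by directly setting the partial derivatives of $w(x,e,e') = F(x,e) - F(x,e')$ equal to zero and then interpret the resulting conditions via the Legendrian lift $j_F$. The equations $\partial_e F(x,e) = 0$ and $\partial_{e'} F(x,e') = 0$ place both $(x,e)$ and $(x,e')$ in the fiber-critical submanifold $\Sigma_F$, while $\partial_x F(x,e) = \partial_x F(x,e')$ says that $j_F(x,e)$ and $j_F(x,e')$ share the same $T^*M$ coordinates. Either $e = e'$, in which case we sit on the diagonal submanifold $\{(x,e,e) : (x,e) \in \Sigma_F\}$ with $w = 0$, or $e \neq e'$, in which case the two points of $\leg$ differ only in the $z$-coordinate and hence form the endpoints of a Reeb chord $\gamma$. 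In the latter case $w(x,e,e') = F(x,e) - F(x,e')$ is the signed difference of $z$-coordinates of the endpoints of $\gamma$, which equals $\pm \ell(\gamma)$; swapping $e$ and $e'$ reverses this sign, so each Reeb chord yields exactly the claimed pair of critical points.

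For nondegeneracy at a Reeb chord critical point, I would write the Hessian $H$ in block form in the variables $(x,e,e')$ and use a Schur-complement reduction based on the invertibility of $\partial_e^2 F$ at each endpoint; this reduces nondegeneracy of $H$ to transversality of the Lagrangian projections of the two sheets of $\leg$ at the base of $\gamma$, which is a $C^\infty$-generic (``chord-generic'') condition on $F$. For the diagonal critical submanifold, I would pass to coordinates $\eta = (e+e')/2$, $\xi = e - e'$. A direct Taylor expansion gives
\[
  w(x, \eta + \xi/2, \eta - \xi/2) \;=\; \partial_e F(x,\eta)\cdot \xi + O(|\xi|^3),
\]
so at a diagonal critical point $(x_0, e_0)$ (where $\partial_e F(x_0,e_0) = 0$), the Hessian in the ordered blocks $(\delta x, \delta \xi, \delta \eta)$ takes the form
\[
  H \;=\; \begin{pmatrix} 0 & B & 0 \\ B^T & 0 & A \\ 0 & A & 0 \end{pmatrix},
\]
with $A = \partial_e^2 F$ and $B = \partial_x \partial_e F$ evaluated at $(x_0, e_0)$. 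Assuming $A$ is invertible --- a generic condition on $F$ equivalent to $\Sigma_F$ being cut out transversally by $\partial_e F$ --- the kernel of $H$ is characterized by $\delta \xi = 0$ and $B^T \delta x + A \delta \eta = 0$, which matches exactly the $n$-dimensional tangent space of the diagonal critical submanifold. Hence the submanifold is Morse--Bott.

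For the index, I restrict $H$ to the transverse slice $\delta x = 0$ to obtain the $2N \times 2N$ symmetric form $\bigl(\begin{smallmatrix} 0 & A \\ A & 0 \end{smallmatrix}\bigr)$. Diagonalizing $A$ and noting that for each eigenvalue $\lambda$ of $A$ the vectors $(v, \pm v)$ are eigenvectors of this block form with eigenvalues $\pm \lambda$, I see that every eigenvalue of $A$ contributes exactly one negative eigenvalue to the restricted Hessian, so the Morse--Bott index is $N$ independent of the signature of $A$ itself. The main obstacle I foresee is the genericity argument: I cannot freely perturb $F$ without disturbing its role as a generating family for the fixed Legendrian $\leg$. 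I would handle this by perturbing $F$ within its stabilization/fiber-preserving equivalence class by small compactly supported bumps away from $\Sigma_F$ and applying a standard Sard--Smale transversality argument to arrange chord-genericity and invertibility of $\partial_e^2 F$ along $\Sigma_F$ simultaneously.
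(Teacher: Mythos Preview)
The paper does not prove this proposition; it is quoted from \cite{f-r, josh-lisa:cap} without argument, so there is no in-paper proof to compare against. Your outline is essentially the standard one and is largely correct, but two points need repair.

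First, your claim that invertibility of $A = \partial_e^2 F$ is ``equivalent to $\Sigma_F$ being cut out transversally by $\partial_e F$'' is false: transversality only says the $N \times (n+N)$ Jacobian $(\partial_x \partial_e F \mid \partial_e^2 F)$ has rank $N$, which is strictly weaker. In fact $A$ will typically degenerate along a codimension-one locus in $\Sigma_F$, so you cannot assume it invertible everywhere. Fortunately the Morse--Bott nondegeneracy survives under the correct hypothesis: the kernel equations $Av=0$, $Bv=0$ force $v=0$ precisely because $\bigl(\begin{smallmatrix} B \\ A \end{smallmatrix}\bigr)$ is injective by the regularity of $\partial_e F$ (built into the definition of generating family), and then $\ker H$ is exactly the tangent space to the diagonal. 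For the index, however, your slice $\{\delta x = 0\}$ fails to be a complement to that tangent space exactly when $\ker A \neq 0$, so the restriction $\bigl(\begin{smallmatrix} 0 & A \\ A & 0 \end{smallmatrix}\bigr)$ can be degenerate. A clean fix: the involution $(x,e,e') \mapsto (x,e',e)$ satisfies $w \circ \sigma = -w$, so $d\sigma$ interchanges the positive and negative eigenspaces of $H$; since $\operatorname{rank} H = 2N$, the index is $N$. Alternatively, your computation is valid at the (generically dense) points where $A$ is invertible, and the Morse--Bott index is locally constant.

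Second, perturbing $F$ by bumps supported \emph{away} from $\Sigma_F$ cannot affect chord-genericity, since the Reeb chords are determined entirely by $j_F|_{\Sigma_F}$. The usual route is to Legendrian-isotope $\leg$ to a chord-generic position and then invoke persistence of generating families (Proposition~\ref{prop:leg-persist}) to carry $F$ along.
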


In this paper, we will work with the critical points of $w$ of Type (\ref{isolatedcrits}) that have positive critical value. 


\begin{defn}\label{gradedvs}
Given $F: M \times \R^N \to \R$ and associated difference function $w: M \times \R^N \times \R^N \to \R$, let $\crit_+(w)$ be the set of critical points of $w$ with positive critical value. Then define $\V{} \coloneqq \left\langle \crit_+(w)\right\rangle_{\Z_2}$ to be the vector space generated over $\Z_2$  by elements in $\crit_+(w)$. Equip $\V{}$ with the following grading on generators:
$$|p| = \ind w(p) - N.$$
\end{defn}

\begin{rem}
 The shift in index occurs so that the groups are invariant when $F$ undergoes a stabilization operation. Note that some previous formulations of $GH^*(F)$ use a shift of $N+1$ rather than $N$ to produce an isomorphism with linearized contact homology; see \cite{f-r}. We use a shift of $N$, however, to guarantee that our product map has the standard degree.
\end{rem}

The positive-valued critical points, and in fact all critical points of $w$, are contained in a compact subset of $M \times \R^{2N}$.

\begin{lem}\label{wcritscmpct} 
Suppose that $F: M \times \R^N \to \R$ is a linear-at-infinity generating family that agrees with a non-zero linear
function outside $K_M \times K_E$, for compact sets $K_M \subset M$ and $K_E \subset \R^N$ as in Remark \ref{namingcompact}. Then every critical point of the
associated difference function $w: M \times \R^N \times \R^N \to \R$ is contained in $K_M \times K_E \times K_E$.
\end{lem}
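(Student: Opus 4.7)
The plan is to exploit the linear-at-infinity hypothesis to show the partial derivatives of $F$ in the fiber directions can only vanish inside $K_M \times K_E$, so any critical point of $w$ must sit over $K_M$ in both fiber factors.

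First I would write $F(x,e) = F^c(x,e) + A(e)$ with $F^c$ supported in $K_M \times K_E$ and $A \colon \R^N \to \R$ a non-zero linear function, and correspondingly expand
\[
w(x,e,e') \;=\; F^c(x,e) - F^c(x,e') + A(e) - A(e').
\]
The critical point equations then read
\[
\partial_x F^c(x,e) - \partial_x F^c(x,e') = 0, \qquad \partial_e F(x,e) = 0, \qquad \partial_{e'} F(x,e') = 0,
\]
where the last two are the crucial ones. Here I am using that $A$ depends only on the fiber coordinates, so it does not contribute to $\partial_x w$, while $\partial_e(A(e)) = dA$ is a non-zero constant covector.

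Next I would argue contrapositively: if $e \notin K_E$, then for every $x \in M$ the point $(x,e)$ lies outside $K_M \times K_E$, hence outside the support of $F^c$, so $F^c$ vanishes on an open neighborhood of $(x,e)$. Then
\[
\partial_e F(x,e) \;=\; \partial_e F^c(x,e) + dA \;=\; dA \;\neq\; 0,
\]
contradicting $\partial_e F(x,e)=0$. An identical argument with $x \notin K_M$ (using any single fiber-derivative equation) yields the same contradiction. Thus any critical point satisfies $x \in K_M$ and $e \in K_E$, and applying the same reasoning to the $e'$-equation gives $e' \in K_E$, proving $(x,e,e') \in K_M \times K_E \times K_E$.

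There is essentially no analytic obstacle here; the only thing to be careful about is keeping the roles of $F^c$ and $A$ straight, in particular that $A$ being a non-zero linear function guarantees $dA \neq 0$ as a covector, which is what makes $\partial_e F$ non-vanishing outside the support of $F^c$. Everything else is formal manipulation of the critical equations.
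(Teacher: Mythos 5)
Your proposal is correct and follows essentially the same route as the paper's proof: decompose $F$ as $F^c + A$ with $F^c$ supported in $K_M \times K_E$, and show that at any point with $x \notin K_M$, $e \notin K_E$, or $e' \notin K_E$ one of the fiber derivatives of $w$ equals the nonzero covector $dA$ (up to sign), so the point cannot be critical. The only difference is cosmetic: you phrase the $x \notin K_M$ case via a fiber-derivative equation, while the paper notes that $w$ there agrees with the linear function $A(e)-A(e')$, which is the same observation.
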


\begin{proof}  First consider the critical points of $w$.  By assumption 
$$F(x, e) = F^c(x, e) + A(e),$$
where $F^c = 0$ if $x \notin K_M$ or $x \notin K_E$, and $A$ is a nonzero linear function.  
Thus we see that for all $(x, e_1, e_2) \in M \times \R^N \times \R^N$,
$$w(x, e_1, e_2) = F^c(x, e_1) - F^c(x, e_2) + A(e_1) - A(e_2).$$
We want to show that if $(x, e_1, e_2)$ is a critical point of $w$, then $x \in K_M$, $e_1 \in K_E$, and $e_2 \in K_E$.
Suppose for a contradiction that $x \notin K_M$.  Then we see that $w(x, e_1, e_2)$ agrees with the linear
function $A(e_1) - A(e_2)$, and thus $(x, e_1, e_2)$
cannot be a critical point.  If  $e_1 \notin K_E$, $w(x, e_1, e_2) =  - F_0(x, e_2) + A(e_1) - A(e_2)$, and thus
the $\frac{\partial w}{\partial e_1}(x, e_1, e_2) \neq 0$, showing that $(x, e_1, e_2)$ is not critical for $w$.  A similar argument
shows that if  $e_2 \notin K_E$,  $\frac{\partial w}{\partial e_2}(x, e_1, e_2) \neq 0.$  Thus if $(x, e_1, e_2)$ is critical for $w$,
then $x \in K_M$, $e_1 \in K_E$, and $e_2 \in K_E$. \qedhere
\end{proof}

Just as the condition that $F$ is linear-at-infinity is not preserved under stabilization of $F$, if $F$ is a linear-at-infinity generating family, then the associated difference function $w$ is no longer linear-at-infinity.   However, we have:

\begin{lem}[\cite{f-r}] \label{lem:lin-diff} If $F$ is a linear-at-infinity generating family, then the associated difference function $w$ is equivalent to a linear-at-infinity function.  \end{lem}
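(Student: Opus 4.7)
The plan is to construct an explicit stabilization $Q$ and fiber-preserving diffeomorphism $\Phi$ realizing the claimed equivalence. Since $F$ is linear-at-infinity, write $F(x,e) = F^c(x,e) + A(e)$ with $F^c$ compactly supported in $K_M \times K_E$ and $A \colon \R^N \to \R$ a nonzero linear function, so that
\[
w(x, e_1, e_2) = F^c(x, e_1) - F^c(x, e_2) + \bigl(A(e_1) - A(e_2)\bigr).
\]
The piece $A(e_1) - A(e_2)$ is already a nonzero linear function on $\R^{2N}$; the only obstruction to $w$ itself being linear-at-infinity is that $F^c(x, e_1) - F^c(x, e_2)$ has non-compact support, contained in the cross $K_M \times \bigl(K_E \times \R^N \cup \R^N \times K_E\bigr)$, which is unbounded in $M \times \R^{2N}$ (one of $e_1, e_2$ is free whenever the other lies in $K_E$).

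My approach would be to stabilize by a Morse saddle $Q(\eta, \xi) = \eta\xi$ on $\R^2$ and then construct a fiber-preserving diffeomorphism $\Phi$ of $M \times \R^{2N+2}$ with shifts in the new coordinates designed to absorb the non-compact pieces. Schematically, a shift $\eta \mapsto \eta + F^c(x, e_1)\,g_1(\xi)$ generates a cross-term $F^c(x, e_1)\,\xi\,g_1(\xi)$ in the expansion of $Q$; choosing $g_1(\xi) = (\phi_1(\xi) - 1)/\xi$ with $\phi_1$ smooth, compactly supported, and $\phi_1(0) = 1$ (so $g_1$ is smooth at $\xi = 0$) converts the unwanted $F^c(x, e_1)$ term into $F^c(x, e_1)\,\phi_1(\xi)$, which at least has bounded $\xi$-support. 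A symmetric shift in $\xi$ handles the $F^c(x, e_2)$ contribution. The residual quadratic $\eta\xi$ then remains as a stabilization of the linear-at-infinity piece $A(e_1) - A(e_2)$, and a further application of Lemma~\ref{lem:lq-l} absorbs it into linear-at-infinity form.

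The main technical obstacle is ensuring that the corrected perturbation is compactly supported in \emph{all} fiber directions simultaneously, not merely in those directly modified by the shifts: the naive shifts sketched above still leave a term like $F^c(x, e_1)\,\phi_1(\xi)$ unbounded in the $(e_2, \eta)$ directions. Overcoming this requires more elaborate shifts (depending on several fiber variables, or coupled by cutoffs in $e_1, e_2$ themselves) and possibly additional stabilizations, together with an implicit function theorem argument to guarantee that the resulting $\Phi$ is a global diffeomorphism rather than merely a local one. Once the compensated remainder is genuinely compactly supported and the quadratic is absorbed, linear-at-infinity follows by direct inspection; the detailed construction appears in \cite{f-r}.
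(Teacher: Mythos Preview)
The paper does not supply its own proof of this lemma: it is stated with a bare citation to \cite{f-r} and no argument. There is therefore nothing in the paper to compare your proposal against.

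On the substance of your sketch: you correctly isolate the obstruction, namely that the non-linear part $F^c(x,e_1)-F^c(x,e_2)$ is supported on the cross $K_M\times(K_E\times\R^N\cup\R^N\times K_E)$ rather than on a compact set. Your proposed mechanism---stabilize by a saddle $Q(\eta,\xi)=\eta\xi$ and use fiber shifts to trade the offending terms for ones with controlled support---is a plausible line of attack. But, as you explicitly concede, the shifts you write down do not achieve simultaneous compact support in all fiber directions, and you ultimately defer the resolution back to \cite{f-r}. That deferred step is the entire content of the lemma; what you have written is a diagnosis of the difficulty together with a gesture toward a strategy, not a proof. If your aim is to match the paper, a citation alone suffices; if your aim is to give an independent argument, the gap you yourself name remains open.
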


Since Reeb chords of a Legendrian with a generating family $F$ are in bijection with positive-valued critical points of the difference function $w$ of $F$, the idea behind generating family cohomology is to study the Morse cohomology of the set $\{w>0\}$. 
To do this, we first equip the domain of the $w$ with a Riemannian metric.


\begin{defn}\label{wEMS}
Let $F: M \times \R^N \to \R$ be a linear-at-infinity generating family that agrees with a non-zero linear function outside $K_M \times K_E$, for compact sets $K_M \subset M$ and $K_E \subset \R^N$ as in Remark \ref{namingcompact}. Let $F$ have difference function $w: M \times \R^N \times \R^N \to \R$. Let the set of \textbf{compatible metrics}, $\mathcal{G}_F$, denote the set of Riemannian metrics $g_w$ on $M\times \R^N \times \R^N$ so that for all $p \in \Crit_+{w}$, there is a neighborhood $U$ of $p$ and a parametrization $\phi: B^{\ind{p}} \times B^{(n+2N) - \ind{p}} \to U$ with $\phi(0) =p$ such that
\begin{enumerate}
\item\label{Morse}  $\phi^*w = w(p) + \frac{1}{2}(x_1^2 + \dots + x_{(n+2N) - \ind{p}}^2) - \frac{1}{2}(x_{(n+2N) - \ind{p} +1}^2 + \dots + x_{n+2N}^2), $
\item\label{Euclidean} $\phi^*g_w = \text{d}x_1 \otimes \text{d} x_1 + \dots + \text{d}x_{n+2N} \otimes \text{d}x_{n+2N}. $ 
\end{enumerate}
In addition, metrics in $\mathcal{G}_F$ will satisfy
\begin{enumerate}[resume]
\item\label{stdatinf} Outside $K_M \times K_E \times K_E$, $g_w$ is the standard Euclidean metric, and
\item\label{Smale} For every pair of critical points $p,q \in \Crit_+{w}$, the unstable and stable manifolds of $p$ and $q$ have a transverse intersection.
\end{enumerate}
\end{defn}

\begin{rem}
\begin{enumerate}
\item Conditions (\ref{Morse}), (\ref{Euclidean}), and (\ref{stdatinf}) are standard Morse theoretic conditions so that we may understand gradient flow near critical points and outside the compact set. In particular, these assumptions allow us to use results of \cite{wehrheim:morse} in Section \ref{sec:moduli}. While condition (\ref{Euclidean}) is not generic, the gradient flow of a pair satisfying conditions (\ref{Morse}), (\ref{stdatinf}), and (\ref{Smale}) is topologically conjugate to one satisfying all four; see \cite[Remark 3.6]{wehrheim:morse} or \cite{franks1979morse}.
\item It is possible to find metrics satisfying (\ref{Morse}) - (\ref{Smale}). Start with a metric satisfying (\ref{Morse}) - (\ref{stdatinf}) and then perform $L^2$-small perturbations of the metric on annuli around critical points to get the additional Smale condition (\ref{Smale}), see \cite{bhsmale}.
\end{enumerate}
\end{rem}

%
%

To compute cohomology groups on this graded vector space, we will equip $\V{}$ with a codifferential $\delta: \V{*} \to \V{*+1}$ defined by a count of isolated gradient flow lines, modulo reparametrization in time. 

In particular, if $p, q \in \V{}$ and $g \in \mathcal{G}_F$, let
\begin{align*}
&\M(p,q) \coloneqq \\
&\left( \{ \gamma:\R \to M \times \R^N \times \R^N \mid \dot{\gamma} = \nabla_gw, \lim_{t \to - \infty} \gamma(t) = p, \lim_{t \to \infty} \gamma(t) = q\} \right) / \R,
\end{align*} 
where $\R$ denotes the action of translation in the $t$ variable.

\begin{thm}\label{mfldstr}
$\M(p,q)$ is a smooth manifold of dimension $|q|-|p|-1$.
\end{thm}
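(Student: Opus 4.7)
The plan is to realize $\mathcal{M}(p,q)$ as the quotient of the transverse intersection $W^-_p(w) \cap W^+_q(w)$ by the free $\mathbb{R}$-action of time reparametrization, and then read off its dimension from Morse indices. The argument closely parallels the classical closed-manifold case, and I would organize it so that each step reduces to a standard Morse-theoretic fact, with the non-compactness of $M \times \mathbb{R}^{2N}$ handled by the linear-at-infinity hypotheses already built into the setup.

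First I would identify trajectories with points of the intersection. A flow line $\gamma$ with $\dot\gamma = \nabla_g w$, $\lim_{t\to-\infty}\gamma(t) = p$, and $\lim_{t\to\infty}\gamma(t) = q$ satisfies $\gamma(t) \in W^-_p(w) \cap W^+_q(w)$ for every $t$; conversely, any point of this intersection determines a unique such flow line by forward and backward integration. This gives a bijection
$$\mathcal{M}(p,q) \;\longleftrightarrow\; \bigl(W^-_p(w) \cap W^+_q(w)\bigr) / \mathbb{R},$$
where $\mathbb{R}$ acts by the flow of $\nabla_g w$, freely and properly on non-constant trajectories.

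Next, I would invoke the Smale condition (\ref{Smale}) in Definition \ref{wEMS} to conclude that $W^-_p(w) \pitchfork W^+_q(w)$ inside $M \times \mathbb{R}^{2N}$, so the intersection is a smooth submanifold. Because positive gradient flow is used, $\dim W^-_p(w) = \operatorname{coind}_w(p) = (n+2N) - \ind_w(p)$ and $\dim W^+_q(w) = \ind_w(q)$, so
$$\dim\bigl(W^-_p(w) \cap W^+_q(w)\bigr) = \bigl((n+2N) - \ind_w(p)\bigr) + \ind_w(q) - (n+2N) = \ind_w(q) - \ind_w(p).$$
Passing to the quotient by the free $\mathbb{R}$-action gives a smooth manifold of dimension $\ind_w(q) - \ind_w(p) - 1 = |q| - |p| - 1$, as claimed.

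The main obstacle is that $M \times \mathbb{R}^{2N}$ is non-compact, so the standard construction of $W^{\pm}$ and the smooth structure of their intersection require some control at infinity. I would address this with Lemma \ref{wcritscmpct}, which places all critical points of $w$ inside the compact set $K_M \times K_E \times K_E$, together with condition (\ref{stdatinf}) that $g_w$ is the standard Euclidean metric outside this set. On the complement, $w$ has the form $A(e_1) - A(e_2)$ for a nonzero linear $A$, so $\nabla_{g_w} w$ is a nonzero constant vector; any trajectory that exits the compact region therefore escapes linearly and cannot return to a critical point, ruling out runaway flow lines between $p$ and $q$. Combined with the normal forms in conditions (\ref{Morse}) and (\ref{Euclidean}), this reduces the global picture to the standard Morse--Smale framework and allows direct application of the machinery of \cite{wehrheim:morse} invoked just after Definition \ref{wEMS}, which yields both the smooth manifold structure on $W^-_p(w) \cap W^+_q(w)$ and on its quotient by $\mathbb{R}$.
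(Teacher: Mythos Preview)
Your proposal is correct and follows essentially the same approach as the paper. The only difference is cosmetic: the paper realizes $\M(p,q)$ as the level-set slice $W^-_p(w) \cap W^+_q(w) \cap w^{-1}(c)$ for a regular value $c \in (w(p),w(q))$ rather than as the quotient $(W^-_p(w) \cap W^+_q(w))/\R$, but both descriptions are standard and yield the same dimension count via the Smale condition.
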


Theorem \ref{mfldstr} follows from the natural identification 
$$\M(p,q) \cong  W^-_p(w) \cap W^+_q(w) \cap w^{-1}(c),$$
for $c \in \R$ a regular value in $(w(p), w(q))$. The Morse-Smale assumption (Condition \ref{Smale} in Definition \ref{wEMS}) guarantees that this is a smooth manifold of the above dimension. 

\begin{defn}
Define the map $\delta : \V{*} \to \V{*+1}$ by
$$\delta(p) = \sum_{q \in \V{|p|+1}}  \#_{\Z_2} \M(p,q) \cdot q.$$
\end{defn}

\begin{rem}
The map $\delta$ is well defined: while $\V{}$ is generated by only positive-valued critical points of $w$, $\delta$ counts flow lines of the positive gradient flow, so $w$ will increase in value along trajectories. 
\end{rem}

In fact, $\delta$ is a codifferential:
\begin{lem}
The map $\delta : \V{*} \to \V{*+1}$ satisfies $\delta^2 = 0$. 
\end{lem}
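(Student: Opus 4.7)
The plan is to execute the standard Morse-cohomological argument for $\delta^2 = 0$, adapted to our linear-at-infinity setting. For $p \in \Crit_+(w)$ with $|p| = i$, the coefficient of a given $r$ with $|r| = i+2$ in $\delta^2(p)$ is
\[
\sum_{\substack{q \in \Crit_+(w) \\ |q| = i+1}} \#_{\Z_2} \M(p,q) \cdot \#_{\Z_2} \M(q,r) \pmod 2,
\]
and I will identify this sum with the mod-$2$ count of the boundary of a compact $1$-manifold with boundary $\overline{\M}(p,r)$ compactifying the $1$-dimensional moduli space $\M(p,r)$ of Theorem~\ref{mfldstr}. Since the boundary of such a manifold has an even number of points, the sum vanishes.

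First I would prove a compactness-up-to-breaking statement: any sequence $[\gamma_n] \in \M(p,r)$ has a subsequence whose suitable time-translates converge, in the usual Gromov sense, to a broken trajectory $(\gamma^{(1)}, \ldots, \gamma^{(k)})$ with $\gamma^{(j)} \in \M(q_{j-1}, q_j)$ for some chain $p = q_0, q_1, \ldots, q_k = r$ of positive-valued critical points of $w$. The novelty compared to the closed-manifold case is the non-compactness of $M \times \R^N \times \R^N$; this is handled by Lemma~\ref{wcritscmpct}, which confines $\Crit(w)$ to the compact set $K_M \times K_E \times K_E$, combined with Condition~(\ref{stdatinf}) of Definition~\ref{wEMS} and Lemma~\ref{lem:lin-diff}, which makes $g_w$ Euclidean and $w$ linear outside this compact set. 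Linearity-at-infinity forces any positive gradient trajectory that wanders into the exterior to have $w$ marching off to $+\infty$, which is incompatible with convergence at $t \to \infty$ to a critical point. Combined with the uniform energy bound $\int |\dot\gamma_n|^2\,dt = w(r) - w(p)$, this confines all trajectories to a fixed compact region, and the usual Arzel\`a--Ascoli argument of \cite{schwarz, wehrheim:morse} produces the broken limit. The Morse--Smale condition (Definition~\ref{wEMS}(\ref{Smale})) then gives $\dim \M(q_{j-1}, q_j) \geq 0$ and a dimension count forces $k \leq 2$; for $k=1$ the limit is interior to $\M(p,r)$, and for $k=2$ it lies in some $\M(p,q) \times \M(q,r)$ with $|q| = i+1$.

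Next I would invoke the Morse gluing theorem: for each broken trajectory $(\gamma_1, \gamma_2) \in \M(p,q) \times \M(q,r)$, the standard construction produces, for each sufficiently large gluing parameter $\tau$, a unique true trajectory $\gamma_\tau \in \M(p,r)$ close to the pre-glued concatenation, with $\gamma_\tau$ converging to the broken trajectory as $\tau \to \infty$. Conditions (\ref{Morse}), (\ref{Euclidean}), and (\ref{Smale}) of Definition~\ref{wEMS} are precisely the hypotheses under which this construction applies, and it appears in this form in \cite{schwarz, wehrheim:morse}. Gluing endows
\[
\overline{\M}(p,r) := \M(p,r) \;\sqcup\; \bigsqcup_{|q| = i+1} \M(p,q) \times \M(q,r)
\]
with the structure of a compact $1$-manifold with boundary equal to $\bigsqcup_{|q| = i+1} \M(p,q) \times \M(q,r)$.

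The principal obstacle, as indicated above, is compactness in the face of a non-compact domain: one must rule out escape to infinity in addition to the usual breaking. Once the linear-at-infinity structure has been used to confine the relevant trajectories to a compact region, the analysis reduces to the standard closed-manifold Morse picture, and $\delta^2 = 0$ follows from the parity of $\#_{\Z_2} \partial \overline{\M}(p,r)$.
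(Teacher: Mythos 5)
Your proposal is correct and follows essentially the same route as the paper: confine all relevant trajectories to a compact region using the linear-at-infinity structure (Lemma \ref{wcritscmpct} and the form of the gradient outside $K_M \times K_E \times K_E$), then run the standard compactification-by-broken-trajectories and parity argument for the $1$-dimensional moduli spaces. The paper simply cites \cite{schwarz} for the breaking and gluing analysis that you spell out explicitly, so the extra detail is welcome but not a departure in method.
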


\begin{proof}
Given our taming condition of requiring $F$ to be linear-at-infinity, the relevant flow lines are contained in a compact set; see Lemma \ref{wcritscmpct} and note that outside this compact set, the gradient flow is partially constant. Thus, the above is shown by the standard argument for functions with a compact domain-- If $\M(p,q)$ is a 1-dimensional manifold, it may be compactified with the addition of once-broken flow-lines, which make up its boundary. As the boundary of a 1-dimensional manifold contains an even number of points, the result follows. For more details, see, for example, \cite{schwarz}.
\end{proof}


\begin{defn} 
  \label{defn:gh}  
  The 
  \dfn{generating family cohomology} $\rgh{*}{F}$ 
  of the  generating family $F$ 
  is defined to be:
$$\rgh{*}{F} = H^*(\V{}, \delta). $$
\end{defn}

\begin{rem}
Note that we built the usual grading shift into the definition of the index of $\V{}$ rather than into the definition of the cohomology as was done in past papers on $GH^*(F)$, such as in \cite{f-r, lisa-jill, lisa:links}.
\end{rem}

\subsection{Independence of $\rgh{*}{F}$ with respect to metric choice}

Crucial to the above construction of $\rgh{*}{F}$ is the metric $g \in \mathcal{G}_F$ used to produce a gradient flow of $w$. In this subsection, we show that $\rgh{*}{F}$ is independent of the metric used in its construction. While different metrics affect the codifferential $\delta$ of $\rgh{*}{F}$, a continuation argument shows that the cohomology is invariant up to isomorphism under generic change of metric. Continuation arguments will be used multiple times in this paper, so we provide a detailed exposition for the following proposition:

\begin{prop}\label{metricinvrgh}
Up to isomorphism, $\rgh{*}{F}$ does not depend on the metric $g \in \mathcal{G}_F$ used in the construction.
\end{prop}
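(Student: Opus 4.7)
The plan is a standard Morse-theoretic continuation argument. Given $g_0, g_1 \in \mathcal{G}_F$, I would construct a chain map $\Phi_{01}: (\V{*}, \delta_{g_0}) \to (\V{*}, \delta_{g_1})$ by counting isolated solutions of a nonautonomous gradient equation interpolating between the two metrics, and then show it is a quasi-isomorphism using chain homotopies obtained from two-parameter families of interpolations.

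First, pick a smooth family $\{g_s\}_{s \in \R}$ of Riemannian metrics on $M \times \R^N \times \R^N$ with $g_s = g_0$ for $s \le 0$ and $g_s = g_1$ for $s \ge 1$, chosen so that each $g_s$ satisfies conditions (\ref{Morse}), (\ref{Euclidean}), and (\ref{stdatinf}) of Definition \ref{wEMS}; these three conditions can be arranged by any fiberwise interpolation that is standard near the critical points of $w$ and outside $K_M \times K_E \times K_E$ (the Morse-Smale condition (\ref{Smale}) is only needed at the endpoints). For $p, q \in \Crit_+(w)$ define
$$\M^{\mathrm{cont}}(p,q) = \{\gamma: \R \to M \times \R^N \times \R^N \mid \dot\gamma(t) = \nabla_{g_t} w(\gamma(t)),\ \gamma(-\infty) = p,\ \gamma(+\infty) = q\},$$
with no $\R$-quotient, since time translation is broken by the time-dependence of $g_t$. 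A Sard-Smale argument on the space of such interpolations yields, for generic $\{g_s\}$, the transversality needed to make $\M^{\mathrm{cont}}(p,q)$ a smooth manifold of dimension $|q| - |p|$. Define
$$\Phi_{01}(p) = \sum_{|q| = |p|} \#_{\zz_2} \M^{\mathrm{cont}}(p, q) \cdot q$$
on generators and extend linearly.

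To see $\Phi_{01}$ is a chain map, I would compactify the $1$-dimensional moduli spaces $\M^{\mathrm{cont}}(p, q)$ (when $|q| = |p| + 1$); their ends consist of once-broken trajectories of the form $\M(p, r; g_0) \times \M^{\mathrm{cont}}(r, q)$ or $\M^{\mathrm{cont}}(p, r) \times \M(r, q; g_1)$, yielding the identity $\delta_{g_1} \Phi_{01} = \Phi_{01} \delta_{g_0}$. For a homotopy inverse I would run the analogous construction with the reversed interpolation to produce $\Phi_{10}$, and then interpolate between the concatenation $\{g_s\} * \{g_{1-s}\}$ and the constant family $\{g_0\}$; the count of isolated points in the resulting one-parameter-family moduli space gives a chain homotopy showing $\Phi_{10} \Phi_{01} \simeq \id$, with a symmetric argument for the other composition. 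The main technical obstacle is precompactness of $\M^{\mathrm{cont}}(p,q)$: I need all continuation trajectories to remain inside $K_M \times K_E \times K_E$. This follows exactly as in the proof of Lemma \ref{wcritscmpct}, since by condition (\ref{stdatinf}) the vector field $\nabla_{g_t} w$ agrees, outside the compact set, with the standard Euclidean gradient of $A(e_1) - A(e_2)$, a nonzero constant vector independent of $t$, so no trajectory that leaves the compact region can asymptote to a critical point at either end.
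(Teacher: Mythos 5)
Your proposal is correct and follows the same continuation-map philosophy as the paper, but the technical realization differs. The paper (Proposition \ref{metricinvrgh} and Lemmas \ref{contcochain}--\ref{constpath}) never writes down a nonautonomous equation: it forms the auxiliary autonomous pair $W(p,t)=w(p)+\epsilon\left((1/2)t^2-(1/4)t^4\right)$, $G=g^t+dt^2$ on $(M\times\R^{2N})\times I$ with $\epsilon/4<\rho$, counts honest gradient flow lines of $(W,G)$ from $(p,0)$ to $(q,1)$, and then establishes the isomorphism through four separate lemmas (chain map; homotopic paths give chain homotopic maps, via a digon; concatenation is compatible with composition, via a triangle; the constant path gives the identity), exploiting that the space of metrics standard outside $K_M\times K_E\times K_E$ is contractible. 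Your version, counting solutions of $\dot\gamma=\nabla_{g_t}w$ directly, buys one genuine simplification: since only the metric varies, $w$ is nondecreasing along every continuation trajectory, so breaking can only occur at critical points of value $\geq w(p)>0$; the positive subcomplex and the Morse--Bott critical submanifold at value $0$ (Proposition \ref{prop:leg-crit-point}) are handled automatically, with no $\epsilon/4<\rho$ bookkeeping. Two small points to tighten. First, you do not need (and should not insist) that every intermediate $g_s$ satisfy conditions (\ref{Morse})--(\ref{stdatinf}) of Definition \ref{wEMS}: conditions (\ref{Morse})--(\ref{Euclidean}) involve a Morse chart that may differ for $g_0$ and $g_1$, so an arbitrary fiberwise interpolation need not satisfy them; all the argument requires is that each $g_s$ be standard outside $K_M\times K_E\times K_E$ (for the compactness argument you give, which is the same escape argument as in Lemma \ref{wcritscmpct} and Lemma \ref{lem:cpct-set}) together with genericity in the interior of the interpolation, with Morse--Smale needed only at the endpoints. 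Second, your last step silently identifies $\Phi_{10}\Phi_{01}$ with the continuation map of the concatenated family before deforming it to the constant family; that identification is the standard gluing/stretching step and is exactly what the paper isolates as Lemma \ref{concathtpy}, so it should be stated (or proved by inserting a long constant stretch at the gluing point) rather than assumed.
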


\begin{proof}
To show that $\rgh{*}{F}$ does not depend on the metric, we use the idea of Floer homology's continuation maps from \cite{floer:arnold}, \cite{hutchings:families}; see also \cite{hutchings2002lecture} for an informal exposition. This technique will be used multiple times to show different notions of invariance; as this is the first use, we will provide the details here for later reference.

The strategy of the proof is to define a continuation cochain map $\Phi_\Gamma$ from a path of functions and metrics $\Gamma$ (Lemma \ref{contcochain}). Given a homotopy between two such paths $\Gamma$ and $\widehat{\Gamma}$, we construct a chain homotopy $K$ between the two continuation maps of the two paths in Lemma \ref{pathhtpy}. We then show that in Lemma \ref{concathtpy} that the continuation map of the concatenation of any two paths $\Phi_{\Gamma_2*\Gamma_1}$ is chain homotopic to the composition of the two continuation maps from the paths $\Phi_{\Gamma_2} \circ \Phi_{\Gamma_1}$. Last, we show that the continuation map of a constant path of a Morse-Smale pair is the identity map (Lemma \ref{constpath}). With all of these pieces, suppose that we have an arbitrary admissible path of functions and metrics. Concatenating the path with its reverse is homotopic to the constant path, showing that the continuation map is an isomorphism on cohomology.

To construct a continuation map, we use a path $\Gamma = \{(w^t,g^t) \mid t \in \left[ 0,1 \right] \}$ of difference functions and metrics. For this proof, we may set $w^t= w$ for all $t \in [0,1]$. Given two metrics $g^0, g^1 \in \mathcal{G}_F$, construct a path $g^t$ in the space of Riemannian metrics on $M \times \R^{2N}$ that are standard outside $K_M\times K_E \times K_E$. This space is contractible because we can use a straight-line homotopy of the metrics on the non-standard part to contract to any given metric in this set.




Given $\Gamma$, we construct a continuation map which we will denote by $\Phi_\Gamma$, with
$$\Phi_\Gamma: C^*(F^0) \to C^*(F^1),$$
where (in this case) $F^0 = F^1 =F$, the generating family that produced $w$. Given $\epsilon > 0$ such that $\dfrac{\epsilon}{4} < \rho$, where $\rho$ is the least positive critical value of $w$, the continuation maps count isolated gradient flow lines of the vector field $\nabla_GW$ on $(M \times \R^{2N}) \times I$ with 
\begin{align}\label{nabGW}
\begin{split}
W(p,t) &= w^t(p) + \epsilon \left((1/2)t^2 - (1/4)t^4\right) \\
G_{(p,t)} &= g^t_p + dt^2.
\end{split}
\end{align}

We say that the path $\Gamma$ is \textbf{admissible} if the unstable and stable manifolds of $\nabla_GW$ intersect transversely. Note that this does not mean that each $(w^t, g^t)$ is Morse-Smale.
%

This vector field has the following property: when projected to $I$, there's a critical point of index 0 at $t=0$ and one of index 1 at $t=1$ with none in between. The vector field flows smoothly from 0 to 1. Let $\crit_+^k(W)$ denote the set of critical points of $W$ with critical value greater than $\dfrac{\epsilon}{4}$. Then we have that
$$\crit_+^k(W) = \crit_+^k(w^0) \times \{0\} \bigcup \crit_+^{k-1}(w^1) \times \{1\}, $$
where the superscript denotes the Morse index. 


For $p \in \crit_+(w^0)$ and $q \in \crit_+(w^1)$, consider the space $\M_\Gamma((p,0),(q,1))$ of flow lines of $\nabla_GW$ from $(p,0)$ to $(q,1)$, modulo reparametrization. If $\Gamma$ is admissible, usual Morse theoretic arguments will show that $\M_\Gamma((p,0),(q,1))$ is a manifold of dimension $\ind_W(q,1) - \ind_W(p,0) -1 = \ind_{w^1}(q)+1 - \ind_{w^0}(p)+1 = \ind_{w^1}(q) - \ind_{w^0}(p)$.

Thus, we can define the map $\Phi_\Gamma$ on generators in $C^*(F^0)$ by
$$\Phi_\Gamma(p) =   \sum_{q \in C^{|p|}(F^1)}  \#_{\Z_2} \M_\Gamma((p,0),(q,1)) \cdot q.$$

We wish to show that the continuation map $\Phi_\Gamma$ gives an isomorphism on $GH^*(F)$. To show this, we must prove that $\Phi_\Gamma$ is a cochain map, that a homotopy of $\Gamma$ induces a chain homotopy, that concatenating paths gives a chain homotopy, and that the constant path gives the identity. These facts together show that our continuation map will induce an isomorphism on cohomology \cite{hutchings:families}.

%

\begin{lem}\label{contcochain}
For the path $\Gamma$, $\Phi_\Gamma$ is a cochain map, i.e., the following diagram commutes:
\begin{figure}[htb] 
\begin{tikzcd} 
C^k(F^0)  \arrow{r}{\Phi_\Gamma} \arrow{d}{\delta^0}
&C^k(F^1) \arrow{d}{\delta^1}\\
C^{k+1}(F^0) \arrow{r}{\Phi_\Gamma} &C^{k+1}(F^1)
\end{tikzcd}
\end{figure} 

Thus, $\Phi_\Gamma$ descends to cohomology.
\end{lem}

\begin{proof}
Consider a 1-dimensional moduli space $\mathcal{M}_\Gamma((p,0),(q,1))$ of flow lines along the vector field $\nabla_GW$ defined above (\ref{nabGW}) for $(p,0) \in \crit_+(W)$ to $(q,1) \in \crit_+(W)$. Since this space is one-dimensional, we have that $\ind_W(q,1) - \ind_W(p,0) = 2$, i.e., $\ind_w(q) - \ind_w(p) =1$. If $\Gamma$ is admissible, the usual Morse Theory compactification argument implies that $\mathcal{M}_\Gamma((p,0),(q,1))$ has a compactification to a compact 1-manifold with boundary  consisting of once-broken flow lines. Since these flow lines may only break at $t=0$ or $t=1$, we have the following expression for the boundary:
\begin{align*}
\partial \overline{\mathcal{M}_\Gamma}((p,0),(q,1)) &= \bigcup_{q' \in \text{Crit}^{\ind(p)}_+(w^1)} \mathcal{M}_\Gamma((p,0),(q',1)) \times \mathcal{M}_\Gamma((q',1),(q,1)) \\
&\cup \bigcup_{p' \in \crit^{\ind(p)+1}_+(w^0)} \mathcal{M}_\Gamma((p,0),(p',0)) \times \mathcal{M}_\Gamma((p',0),(q,1)).
\end{align*}
Thus, 
\begin{align*}
0 &= \sum_{q' \in \text{Crit}^{\ind(p)}_+(w^1)} \#_{\Z_2}\mathcal{M}_\Gamma((p,0),(q',1)) \cdot \#_{\Z_2} \mathcal{M}_\Gamma((q',1),(q,1)) \\
&+ \sum_{p' \in \crit^{\ind(p)+1}_+(w^0)} \#_{\Z_2}\mathcal{M}_\Gamma((p,0),(p',0)) \cdot \#_{\Z_2} \mathcal{M}_\Gamma((p',0),(q,1))
\end{align*}
The dynamics of $\nabla_GW$ imply that flows between critical points at fixed time $t=0$ or $t=1$ are completely contained in $M \times \R^{2N} \times \{t\}$. Thus, we have the following natural identifications of the following manifolds:
$$\mathcal{M}_\Gamma((q',1),(q,1)) = \mathcal{M}_{w^1}(q',q)$$
$$\mathcal{M}_\Gamma((p,0),(p',0)) = \mathcal{M}_{w^0}(p,p').$$
So we have that 
\begin{align*}
0 &= \sum_{q' \in \text{Crit}^{\ind(p)}_+(w^1)} \#_{\Z_2}\mathcal{M}_\Gamma((p,0),(q',1)) \cdot \#_{\Z_2} \mathcal{M}_{w^1}(q',q) \\
&+ \sum_{p' \in \crit^{\ind(p)+1}_+(w^0)} \#_{\Z_2} \mathcal{M}_{w^0}(p,p') \cdot \#_{\Z_2} \mathcal{M}_\Gamma((p',0),(q,1)).
\end{align*}
The result that $0 = \delta^1 \circ \Phi_\Gamma + \Phi_\Gamma  \circ \delta^0$ follows.
\end{proof}


The construction so far relied heavily on the path $\Gamma$ and it is necessary to show that $\Phi_\Gamma^*$, the induced map on cohomology, does not depend on the path chosen in a given homotopy class of paths. For the current proof we will take a homotopy of the path of metrics on $M\times \R^{2N}$ that are standard outside $K_M \times K_E \times K_E$. Since this space is contractible, the induced isomorphism will be canonical, showing that $\rgh{*}{F}$ is independent of the metric chosen in the construction. 

Choose another path $(\widehat{g}^t)$ with $g^0 = \widehat{g}^0$ and $g^1 = \widehat{g}^1$ (i.e., change the path but not the endpoints), and suppose there is a generic homotopy between these two paths. We wish to say that the corresponding continuation maps are chain homotopic (see Figure \ref{cochainhtpydiagram}). 

\begin{lem}\label{pathhtpy}
Given admissible paths $\Gamma$ and $\widehat{\Gamma}$ from $(w^0,g^0)$ to $(w^1,g^1)$, a fixed endpoint homotopy from the path $\Gamma$ to $\widehat{\Gamma}$ induces a chain homotopy
$$K: C^*(F^0) \to C^{*-1}(F^1)$$
between the maps $\Phi_\Gamma$ and $\Phi_{\widehat{\Gamma}}$; see Figure \ref{cochainhtpydiagram}.

\end{lem}

\begin{figure}
\begin{tikzpicture}[baseline= (a).base]
\node[scale=1.2] (a) at (0,0){
\begin{tikzcd}
\cdots \arrow[r, "\delta^0"] 
&C^{k-1}(F^0) \arrow[r, "\delta^0"] \arrow[ld, "K^{k-1}"' near start] \arrow[d, shift left, "\Phi_{\Gamma}"] \arrow[d, shift right, "\Phi_{\widehat{\Gamma}}"']
&C^{k}(F^0) \arrow[r, "\delta^0"] \arrow[ld, "K^k"' near start] \arrow[d, shift left, "\Phi_{\Gamma}"] \arrow[d, shift right, "\Phi_{\widehat{\Gamma}}"']
&C^{k+1}(F^0) \arrow[r, "\delta^0"] \arrow[ld, "K^{k+1}"' near start] \arrow[d, shift left, "\Phi_{\Gamma}"] \arrow[d, shift right, "\Phi_{\widehat{\Gamma}}"']
&\cdots \\
\cdots \arrow[r, "\delta^1"]
&C^{k-1}(F^1) \arrow[r, "\delta^1"]
&C^k(F^1) \arrow[r, "\delta^1"]
&C^{k+1}(F^1) \arrow[r, "\delta^1"]
&\cdots
\end{tikzcd}
};
\end{tikzpicture}

\caption{A chain homotopy $K$ between $\Phi_{\Gamma}$ and $\Phi_{\widehat{\Gamma}}$ is a sequence of maps that makes the above diagram commute.}
\label{cochainhtpydiagram}
\end{figure}

\begin{proof}
The image of the path homotopy between $\Gamma$ and $\widehat{\Gamma}$ traces out the shape of a digon $D$, a smooth two-dimensional manifold with corners consisting of two vertices, two edges in between them, and one face. The vertices correspond to fixed endpoints of the paths in the homotopy while the edges correspond to the two homotopic paths. For every $d \in D$, the homotopy gives a pair $(w^d,g^d)$, where, in this case, $w^d=w$ and $g^d$ is a metric on $M \times \R^{2N}$ that is standard outside the nonlinear-support compact set $K_M \times K_E \times K_E$.

Let $h$ be a metric on $D$ such that the edges of $D$ have length one. Let $f:D \to \R$ be a nonnegative function on the digon that has an index 0 critical point at one vertex $d_0$ with critical value 0, an index 2 critical point at the other vertex $d_1$ with critical value $\dfrac{\epsilon}{4}$ for $\epsilon > 0$ as in the equation (\ref{nabGW}) of $\nabla_GW$, and no other critical points. Lastly suppose $\nabla_hf$ is tangent to the edges of the digon and agrees with the standard gradient of $\epsilon \left((1/2)t^2 - (1/4)t^4\right)$ on each edge.

To get a chain homotopy, we will consider certain flow lines of the function $W^D: (M \times \R^{2N}) \times D \to \R$ and metric $G^D$ defined by
\begin{align*}
W^D(p,d) &= w^d(p) + f(d) \\ 
G^D_{(p,d)} &= g^d_p + h_d
\end{align*}

Denote the critical points of $W^D$ with critical value greater than $\dfrac{\epsilon}{4}$ by $\crit_+(W^D)$. Since the only critical points of the digon occur at the vertices $d_0$ and $d_1$,

$$\crit_+^k(W^D) = \crit_+^k(w^0) \times \{d_0\} \bigcup \crit_+^{k-2}(w^1) \times \{d_1\}. $$

The homotopy is admissible if the stable and unstable manifolds of $\nabla_{G^D}W^D$ have a transverse intersection. Given an admissible homotopy, the space $\M_D((p,d_0),(q,d_1))$ of gradient flow lines modulo reparametrization is a manifold of dimension 
$$\ind_{W^D}(q,1) - \ind_{W^D}(p,0) -1 = \ind_{w^1}(q)+2 - \ind_{w^0}(p)+1 = \ind_{w^1}(q) - \ind_{w^0}(p) +1.$$
Thus, we may define a map $K_D: C^*(F^0) \to C^{*-1}(F^1)$ by
$$K_D(p) =   \sum_{q \in C^{|p|-1}(F^1)}  \#_{\Z_2} \M_D((p,d_0),(q,d_1)) \cdot q.$$

To show that $K_D$ gives a chain homotopy between $\Phi_\Gamma$ and $\Phi_{\widehat{\Gamma}}$, we use the usual argument that if $\mathcal{M_D}((p,0),(q,1))$ is one-dimensional, then it has a compactification to a compact one-dimensional manifold with boundary. The boundary contains the usual once-broken flow lines and has additional flow lines from the boundary of the digon, $\partial D$.

\begin{align*}
\partial \overline{\mathcal{M_D}}((p,0),(q,1)) &= \bigcup_{q' \in \text{Crit}^{\ind(p)-1}_+(w^1)} \mathcal{M}((p,0),(q',1)) \times \mathcal{M}((q',1),(q,1)) \\
&\cup \bigcup_{p' \in \crit^{\ind(p)+1}_+(w^0)} \mathcal{M}((p,0),(p',0)) \times \mathcal{M}((p',0),(q,1))\\
&\cup \M_\Gamma((p,0),(q,1)) \; \cup \; \M_{\widehat{\Gamma}}((p,0),(q,1))
\end{align*}

Thus, $K \delta^0 + \delta^1 K = \Phi_{\Gamma} - \Phi_{\widehat{\Gamma}}$. 
\end{proof}

\begin{lem}\label{concathtpy}
Given admissible paths $\Gamma_1, \Gamma_2$ with $\Gamma_1(1)=\Gamma_2(0)$, there is a concatenation chain homotopy between $\Phi_{\Gamma_2} \circ \Phi_{\Gamma_1}$ and $\Phi_{\Gamma_2 *\Gamma_1}$.
\end{lem}

\begin{proof}
This proof is similar to the proof of Lemma \ref{pathhtpy}. An admissible homotopy between $\Gamma_1$ followed by $\Gamma_2$ with their concatenation $\Gamma_2*\Gamma_1$ may be represented by a triangle $T$ with vertices $r_i$ representing the pair $(w^i,g^i)$ for $i \in \{0,1,2\}$. For every $r \in T$, this homotopy gives a pair $(w^r, g^r)$ with $w^r = w$ and $g^r$ a metric that is standard outside $K_M \times K_E \times K_E$.

Equip $T$ with a metric $h$ that gives each edge of $T$ length one. Let $f: T \to \R$ be a nonnegative function with an index $i$ critical point at vertex $r_i$ with critical value $i\dfrac{\epsilon}{4}$ and no other critical points. Lastly suppose $\nabla_hf$ is tangent to the edges of the triangle and agrees with the standard gradient of $\epsilon \left((1/2)t^2 - (1/4)t^4\right)$ on each edge.

The remainder of the proof follows as in Lemma \ref{pathhtpy} by analyzing spaces of flow lines from $(p,r_0)$ to $(q, r_2)$ for $p \in \crit_+(w^0)$ and $q \in \crit_+(w^2)$.
\end{proof}

Lastly, since concatenating a path with its reverse is homotopic to the constant path, we need:
\begin{lem}\label{constpath}
Given a constant path $\Gamma = (w^t,g^t)$ with $w^t =w$ and $g^t = g \in \mathcal{G}_F$ for $t \in [0,1]$, $\Phi_{\Gamma} = \text{id}_{C(F)}$.
\end{lem}

\begin{proof}
The fact that every point of $\Gamma$ is a Morse-Smale pair makes this case different than just fixing the path of functions. Given $p \in \crit_+(w)$, we claim there is an isolated flow line from $(p,0)$ to $(p,1)$ along $\nabla_GW$, for $W, G$ as in \ref{nabGW}. In fact, for all $t \in [0,1]$, $$\nabla_GW(p,t) = \left( 0, \nabla \left( \epsilon \left((1/2)t^2 - (1/4)t^4 \right) \right) \right), $$ and the result follows.
\end{proof}

Thus, through the logic outlined at the beginning of this subsection, we have:
\begin{prop}
For an admissible path $\Gamma$, the map $\Phi_\Gamma$ induces an isomorphism $GH^*(F^0) \to GH^*(F^1)$. 
\end{prop}
\end{proof}


\subsection{Invariance of $GH^*(F)$ with respect to stabilization and fiber-preserving diffeomorphism}

We show that the generating family (co)homology descends to equivalence classes of generating families, as defined in Subsection \ref{ssec:gf-back}.

\begin{prop}  \label{lem:cohom-equiv}  If $F_0 \sim F_1$, then $\rgh{*}{F_0} \simeq \rgh{*}{F_1}$.
 
\end{prop}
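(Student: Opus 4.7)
The plan is to reduce $F_0 \sim F_1$ to its atomic moves—fiber-preserving diffeomorphism and stabilization—and prove invariance of $GH^*$ under each separately. Since equivalence is generated by sequences of these operations, establishing both gives the result by composition.

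For fiber-preserving diffeomorphism, suppose $F_1 = F_0 \circ \Phi$ with $\Phi(x,e) = (x, \phi_x(e))$. The difference functions are related by $w_1 = w_0 \circ \widetilde{\Phi}$, where $\widetilde{\Phi}(x,e,e') = (x, \phi_x(e), \phi_x(e'))$ is a diffeomorphism of $M \times \R^{2N}$ that respects the difference-function structure. This restricts to a bijection $\Crit_+(w_1) \leftrightarrow \Crit_+(w_0)$ preserving Morse index, hence preserving the cohomological grading. Given $g_0 \in \mathcal{G}_{F_0}$, the pullback $\widetilde{\Phi}^* g_0$ is compatible with $w_1$ in the sense of Definition \ref{wEMS} near the critical set; if it fails the standard-at-infinity condition, one interpolates to the Euclidean metric outside a large compact set and applies Proposition \ref{metricinvrgh} to identify the resulting cohomologies. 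Since $\widetilde{\Phi}$ intertwines the positive gradient flows, it induces a bijection of moduli spaces and hence a cochain isomorphism $C^*(F_1) \cong C^*(F_0)$.

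For stabilization, suppose $F_1 = F_0 \oplus Q$ with $Q:\R^K \to \R$ a nondegenerate quadratic. The new difference function is
$$w_1(x, e, e', f, f') = w_0(x, e, e') + Q(f) - Q(f').$$
The function $Q(f) - Q(f')$ on $\R^{2K}$ has a unique critical point at the origin of Morse index exactly $K$ (the negative eigenvalues of $\hess Q$ and $-\hess Q$ combine to give $K$, independent of the signature of $Q$). Therefore $\Crit_+(w_1) = \Crit_+(w_0) \times \{(0,0)\}$ with Morse indices shifted by $K$; the grading rule $|p| = \ind(w) - N$, now with $N$ replaced by $N+K$, makes this bijection degree-preserving. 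Choosing a split metric $g_1 = g_0 \oplus g_Q$ with $g_Q$ the standard Euclidean metric, the $\nabla_{g_1} w_1$-flow factors as a product; since $(0,0)$ is the unique critical point of the quadratic factor, any trajectory from $(p,0,0)$ to $(q,0,0)$ must be constant in the quadratic coordinates, giving a canonical diffeomorphism $\M^{w_1}((p,0,0),(q,0,0)) \cong \M^{w_0}(p,q)$ and hence an isomorphism of cochain complexes.

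The main obstacle is that $F_0 \oplus Q$ fails to be linear-at-infinity, so a priori $\mathcal{G}_{F_0 \oplus Q}$ as defined may not directly accommodate a split metric with the required Euclidean behavior at infinity. To navigate this, I would invoke Lemma \ref{lem:lq-l} to replace $F_0 \oplus Q$ by an equivalent linear-at-infinity $\widehat{F}_1$, and observe that moving between $F_0 \oplus Q$ and $\widehat{F}_1$ is accomplished by a fiber-preserving diffeomorphism (possibly together with a further stabilization), which the first part of the argument already handles. Once the cochain-level identifications are assembled, Proposition \ref{metricinvrgh} ensures that metric choices along the way do not affect the induced cohomology isomorphism, completing the proof that $GH^*(F_0) \cong GH^*(F_1)$.
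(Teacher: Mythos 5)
Your proposal is correct and follows essentially the same route as the paper, which proves the statement by splitting it into Lemmas \ref{lem-stab} and \ref{lem-fpd}: the same index-shift bookkeeping, the same split metric with constant trajectories in the quadratic factor, the same conjugation of gradient flows under the fiber-preserving diffeomorphism (Lemma \ref{gradvfcorres}), and the same appeal to metric-independence (Proposition \ref{metricinvrgh}). The only cosmetic differences are that the paper stabilizes one variable $\pm(e')^2$ at a time and restricts to fiber-preserving diffeomorphisms that are isometries outside the compact set, whereas you treat a general nondegenerate quadratic $Q:\R^K\to\R$ in one step and handle behavior at infinity by interpolation; neither change alters the substance of the argument.
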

This follows from Lemmas \ref{lem-stab} and \ref{lem-fpd}.

\begin{lem}\label{lem-stab}
If $F:M\times \R^N \rightarrow \R$ is altered by a positive or negative stabilization resulting in $\widehat{F}:M\times \R^N \times \R \rightarrow \R$ then $\rgh{*}{\widehat{F}} \cong \rgh{*}{F}$.
\end{lem}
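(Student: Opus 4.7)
The plan is to exploit the product decomposition that stabilization induces on the difference function. Writing $\widehat{F}(x,e,e') = F(x,e) + Q(e')$ with $Q(e') = \pm (e')^2$, the associated difference function factors as
\[
\widehat{w}(x, e_1, e'_1, e_2, e'_2) = w(x, e_1, e_2) + \widetilde{Q}(e'_1, e'_2),
\]
where $\widetilde{Q}(e'_1, e'_2) := Q(e'_1) - Q(e'_2)$ is a nondegenerate indefinite quadratic on $\R^2$ of Morse index $1$, independent of the sign of the stabilization.

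First I would produce a canonical grading-preserving bijection on cochain generators. The critical points of $\widehat{w}$ are exactly $\{(p,0,0) : p \in \crit(w)\}$ with matching critical values, so $p \mapsto \hat{p} := (p,0,0)$ identifies $\crit_+(w)$ with $\crit_+(\widehat{w})$. A Hessian splitting gives $\ind_{\widehat{w}}(\hat{p}) = \ind_w(p) + 1$; since the grading on $C^*(\widehat{F})$ is shifted by $-(N+1)$ rather than $-N$,
\[
|\hat{p}|_{\widehat{F}} = \ind_{\widehat{w}}(\hat{p}) - (N+1) = \ind_w(p) - N = |p|_F,
\]
so this bijection extends to a graded linear isomorphism between the cochain spaces of $F$ and $\widehat{F}$.

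Next I would match the codifferentials via a split metric. By Proposition \ref{metricinvrgh} any compatible metric computes $\rgh{*}{\widehat{F}}$, so take $\widehat{g} := g \oplus g_{\mathrm{std}}$ on $M \times \R^{N+1} \times \R^{N+1}$, with $g \in \mathcal{G}_F$ and $g_{\mathrm{std}}$ the Euclidean metric on the extra $\R^2$ factor spanned by $(e'_1, e'_2)$. Since $\widehat{w}$ and $\widehat{g}$ are direct sums, each positive-gradient trajectory $\widehat{\gamma} = (\gamma, \eta)$ of $\widehat{w}$ decouples as $\dot\gamma = \nabla_g w$ and $\dot\eta = \nabla \widetilde{Q}$. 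For a trajectory from $\hat{p}$ to $\hat{q}$ the endpoint condition $\eta(\pm\infty) = (0,0)$ together with the exponential growth/decay of the indefinite-quadratic flow (the stable direction decays only from $t \to -\infty$ and the unstable direction decays only at $t \to +\infty$) forces $\eta \equiv 0$. This yields a natural identification $\M_{\widehat{w}}(\hat{p}, \hat{q}) \cong \M_w(p,q)$, so the codifferentials agree under the bijection and $\rgh{*}{\widehat{F}} \cong \rgh{*}{F}$.

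The main technical obstacle is that $\widehat{F}$ is no longer linear-at-infinity, so the constructions of Section \ref{sec:gh-legendrian} do not apply verbatim to $\widehat{F}$ and one should check that $\rgh{*}{\widehat{F}}$ is well defined. I would handle this via Lemma \ref{lem:lq-l}, which produces a linear-at-infinity generating family $\widetilde{F}$ equivalent to $\widehat{F}$ through a fiber-preserving diffeomorphism; Lemma \ref{lem-fpd}, which is proved independently, then transports the computation and gives $\rgh{*}{\widehat{F}} \cong \rgh{*}{\widetilde{F}}$, to which the splitting argument applies. A secondary technical point is verifying the Morse-Smale condition for $\widehat{g}$, but this follows from the product structure: stable and unstable manifolds of $\widehat{w}$ at $\hat{p}$ split as $W^{\pm}_p(w) \times W^{\pm}_{(0,0)}(\widetilde{Q})$, and products of transverse intersections remain transverse.
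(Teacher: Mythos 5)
Your proposal is correct and takes essentially the same route as the paper's proof: the same critical-point bijection with the $N+1$ grading shift, the same split metric $g \oplus g_{\mathrm{std}}$, the same decoupling of the gradient flow into the $w$-factor and the indefinite quadratic factor whose only bi-asymptotic trajectory is constant, and the same product argument for the Smale condition. Your additional step invoking Lemma \ref{lem:lq-l} and Lemma \ref{lem-fpd} to address the failure of linear-at-infinity after stabilization is a point the paper's proof glosses over rather than a divergence in method.
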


\begin{proof}
Given a generating family $F: M \times \R^N \to \R$, define $F^{\pm}: M \times \R^N \times \R \to \R$ where $F^{\pm}(x,e,e') = F(x,e) \pm (e')^2$. It suffices to show $\rgh{*}{F^\pm} \cong \rgh{*}{F}$. 

We will denote and express the difference functions from $F^\pm$ by 
\begin{align*}
w^\pm: M \times \R^N \times \R \times \R^N \times \R &\to \R \\
(x,e_1, e_1', e_2, e_2') &\mapsto F^\pm(x,e_1,e_1') - F^\pm(x,e_2, e_2') \\
&= F(x,e_1) \pm (e_1')^2 - F(x,e_2) \mp (e_2')^2
\end{align*}

Given $p=(x,e_1,e_2) \in \crit_+(w)$ there is a corresponding critical point $p'=(x,e_1,0,e_2,0) \in \crit_+(w^\pm)$ with the same critical value and $\ind_{w^\pm}(p') = \ind_w(p) +1$. This gives a bijection between the generators of $\V{}$ and $C(F^\pm)$ and this bijection preserves grading: $|p'| = \ind_{w^\pm}(p') - (N+1) = \ind_w(p) - N = |p|$. This is precisely why the grading on $\V{}$ depends on the dimension of the fiber of $F$.

We claim that there is also a correspondence of gradient flow lines, but to show this we must choose a metric from $\mathcal{G}_{F^\pm}$. We claim that, if $g \in \mathcal{G}_F$, then $g' = g +g_0 \in \mathcal{G}_{F^\pm}$, where $g_0$ is the standard Euclidean metric on the two extra $\R$ coordinates of $M \times \R^{2(N+1)}$. The only condition from Definition \ref{wEMS} that is not immediate is the Smale condition (\ref{Smale}).

To check the Smale condition for $(w^\pm, g')$, we first show that the gradient flow of this pair splits. We may write the stabilized difference function as 
$$w^\pm(x,e_1, e_1', e_2, e_2') = w(x,e_1,e_2) \pm Q^\pm(e_1',e_2'), $$
for $Q^\pm:\R^2\to\R$ given by $Q^\pm(e_1',e_2')=  \pm (e_1')^2 \mp (e_2')^2 $, so $$d(w^\pm)_{(x,e_1, e_1', e_2, e_2')} = dw_{(x,e_1,e_2)} + dQ^\pm_{(e_1',e_2')}.$$ 
We then claim that $\nabla_{g'}w^\pm = (\nabla_gw, \nabla_{g_0}Q^\pm)$; the details of a similar proof may be found in Lemma \ref{gradsplit}. In particular, the unstable and stable manifolds split, and since $g \in \mathcal{G}_F$, this reduces to checking the Smale condition for $(Q^\pm,g_0)$. But the only critical point of $Q^\pm$ is $0=(0,0)$, and the only point in $T_0W_0^-(Q^\pm) \cap T_0W^+_0(Q^\pm)$ is $0$, and the result holds; see Prop \ref{Smaleprop} for a similar argument with more details.

To show that $\rgh{*}{F} \cong \rgh{*}{F^\pm}$, we show that, with the metric chosen, $\M(p,q) \cong \M(p',q')$, where $p',q' \in \crit_+(w^\pm)$ are the images of $p,q \in \crit_+(w)$ under the bijection described earlier in this proof. Since we showed in the previous subsection that the construction of $\rgh{*}{F^\pm}$ does not depend on the metric chosen from $\mathcal{G}_{F^\pm}$, the result will follow. In fact, since $\M(p',q')\cong \left( W^-_{p'}(w^\pm) \cap W^+_{q'}(w^\pm) \right) /\R$, the fact that $T_0W_0^-(Q^\pm) \cap T_0W^+_0(Q^\pm) = \{(0,0)\}$ gives a diffeomorphism.
\end{proof}

\begin{lem}\label{lem-fpd}
If $F:M\times \R^N \rightarrow \R$ is altered by fiber-preserving diffeomorphism that is an isometry outside $K_M \times K_E \times K_E$ resulting in $\widehat{F}:M\times \R^N \times \R \rightarrow \R$ then $\rgh{*}{\widehat{F}} \cong \rgh{*}{F}$.
\end{lem}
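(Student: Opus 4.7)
My plan is to lift the fiber-preserving diffeomorphism $\Phi$ to a diffeomorphism of the domain of the difference function and transport the entire Morse-theoretic setup along it, in the same spirit as Lemma~\ref{lem-stab}.

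Write $\Phi(x,e) = (x, \phi_x(e))$ and define $\Psi: M \times \R^N \times \R^N \to M \times \R^N \times \R^N$ by $\Psi(x, e, e') = (x, \phi_x(e), \phi_x(e'))$. A direct computation shows that the difference function $\widehat{w}$ of $\widehat{F} = F \circ \Phi$ satisfies $\widehat{w} = w \circ \Psi$. Consequently $\Psi$ induces a bijection $\Crit_+(\widehat{w}) \to \Crit_+(w)$ preserving critical values and Morse indices, and hence a grading-preserving isomorphism of the underlying graded vector spaces of the two cochain complexes.

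To match up the codifferentials I will transport a compatible metric by pullback. Given $g \in \mathcal{G}_F$, set $\widehat{g} = \Psi^* g$. The isometry hypothesis guarantees that $\Psi$ is an isometry outside $K_M \times K_E \times K_E$, so $\widehat{g}$ agrees with the standard Euclidean metric there, giving Condition~(3) of Definition~\ref{wEMS}. If $\phi$ is a Morse chart at $\Psi(p)$ verifying Conditions~(1)--(2) for $(w,g)$, then $\widehat{\phi} = \Psi^{-1} \circ \phi$ verifies the analogous conditions for $(\widehat{w}, \widehat{g})$ at $p$, since pulling back gives $\widehat{\phi}^* \widehat{w} = \phi^* w$ and $\widehat{\phi}^* \widehat{g} = \phi^* g$. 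The Smale condition~(4) transports because $\Psi$ sends stable and unstable manifolds of $w$ bijectively to those of $\widehat{w}$ and preserves transverse intersections. Hence $\widehat{g} \in \mathcal{G}_{\widehat{F}}$.

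Since $\Psi$ conjugates $\nabla_{\widehat{g}} \widehat{w}$ to $\nabla_g w$, it induces a diffeomorphism of moduli spaces $\M_{\widehat{w}, \widehat{g}}(p, q) \cong \M_{w, g}(\Psi(p), \Psi(q))$ for every pair of positive critical points. Counting points then identifies the codifferentials under the grading-preserving bijection of generators, producing an isomorphism $\rgh{*}{\widehat{F}}_{\widehat{g}} \cong \rgh{*}{F}_{g}$ at the level of the metric-dependent groups. Proposition~\ref{metricinvrgh} removes the dependence on the particular metrics, and the conclusion $\rgh{*}{\widehat{F}} \cong \rgh{*}{F}$ follows.

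The main obstacle will be the careful bookkeeping in verifying $\widehat{g} \in \mathcal{G}_{\widehat{F}}$: although pullback is formally functorial on both functions and metrics, each clause of Definition~\ref{wEMS} must be checked, and it is precisely the isometry-outside-the-compact-set hypothesis on $\Phi$ that preserves the standard-at-infinity Condition~(3); without it, an additional interpolation argument between $\widehat{g}$ and the standard metric would be required.
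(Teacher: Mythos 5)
Your proposal is correct and follows essentially the same route as the paper: the paper also transports the entire Morse package through the diffeomorphism $(x,e,e') \mapsto (x,\phi_x(e),\phi_x(e'))$ of the domain of $w$, invoking Lemma~\ref{gradvfcorres} for the correspondence of gradient vector fields and the isometry-outside-compact-set hypothesis to place the transported metric in $\mathcal{G}_{\widehat{F}}$. Your explicit verification of Conditions~(1)--(2) of Definition~\ref{wEMS} via $\widehat{\phi}=\Psi^{-1}\circ\phi$ is a detail the paper leaves implicit, but the argument is the same.
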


This result will follow from: 
\begin{lem} \label{gradvfcorres}
Suppose $g$ is a Riemannian metric on $X$, $f:X \rightarrow \R$ and $\Phi:X \rightarrow \widehat{X}$ is a diffeomorphism. If $V$ is the gradient vector field of $f$ with respect to $g$, then $\Phi_*V$ is the gradient vector field of $(\Phi^{-1})^*f$ with respect to the pullback metric $(\Phi^{-1})^*g$.
\end{lem}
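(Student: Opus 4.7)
The plan is to use the coordinate-free defining property of a gradient vector field and the naturality of pullbacks and pushforwards under a diffeomorphism. Recall that $V = \nabla_g f$ is characterized by the equation $g(V, W) = df(W)$ for every vector field $W$ on $X$. To prove the lemma, I would verify that $\Phi_* V$ satisfies the analogous characterizing equation for $\nabla_{\hat{g}} \hat{f}$, where I abbreviate $\hat{g} = (\Phi^{-1})^* g$ and $\hat{f} = (\Phi^{-1})^* f = f \circ \Phi^{-1}$. That is, I would show that for every vector field $Y$ on $\widehat{X}$,
\begin{equation*}
\hat{g}(\Phi_* V, Y) = d\hat{f}(Y).
\end{equation*}

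First I would fix a point $\hat{x} \in \widehat{X}$, set $x = \Phi^{-1}(\hat{x})$, and let $W = (\Phi^{-1})_* Y \in T_x X$. Expanding the left-hand side using the definition of the pullback metric,
\begin{equation*}
\hat{g}_{\hat{x}}\bigl((\Phi_* V)_{\hat{x}}, Y_{\hat{x}}\bigr) = g_x\bigl((\Phi^{-1})_*(\Phi_* V)_{\hat{x}}, (\Phi^{-1})_* Y_{\hat{x}}\bigr) = g_x(V_x, W),
\end{equation*}
where the last equality uses $(\Phi^{-1})_* \circ \Phi_* = \operatorname{id}$ at the level of tangent vectors. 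Next I would expand the right-hand side using the chain rule and the definition of pullback of functions,
\begin{equation*}
d\hat{f}_{\hat{x}}(Y) = d(f \circ \Phi^{-1})_{\hat{x}}(Y) = df_x\bigl((\Phi^{-1})_* Y\bigr) = df_x(W).
\end{equation*}
By the hypothesis that $V = \nabla_g f$, we have $g_x(V_x, W) = df_x(W)$, so the two sides agree. Since $\hat{x}$ and $Y$ were arbitrary, this proves the claim.

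Since the gradient of $\hat{f}$ with respect to $\hat{g}$ is uniquely determined by the defining equation above (given that $\hat{g}$ is nondegenerate), we conclude $\Phi_* V = \nabla_{\hat{g}} \hat{f}$. There is no real obstacle in this proof: the only care required is in keeping the directions of pushforward and pullback straight. The statement is essentially the naturality of the gradient construction under diffeomorphism, and the work consists entirely of unwinding definitions.
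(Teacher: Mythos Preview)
Your proof is correct and is essentially identical to the paper's own argument: both verify the defining equation $\hat g(\Phi_*V,Y)=d\hat f(Y)$ at an arbitrary point by pulling $Y$ back to $X$ via $(\Phi^{-1})_*$ and invoking the chain rule together with the hypothesis $g(V,\cdot)=df$. The only differences are notational (the paper writes $\hat u=\Phi_*u$ rather than introducing $W=(\Phi^{-1})_*Y$).
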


\begin{proof}
Given that the vector field $V$ is such that for all $x \in X$,  $g_x (V_x,u) = df_x(u)$ for all $u \in T_x X$, we wish to show that the vector field $\Phi_*V$ satisfies $((\Phi^{-1})^*g)_{\widehat{x}}((\Phi_*V)_{\widehat{x}}, \widehat{u}) = d(f \circ \Phi^{-1})_{\widehat{x}}(\widehat{u})$ for all $\widehat{x} \in \widehat{X}$ and $\widehat{u} \in T_{\widehat{x}}\widehat{X}$.

Let $\widehat{x} \in \widehat{X}$ and $\widehat{u} \in T_{\widehat{x}}\widehat{X}$. Since $\Phi$ is a diffeomorphism, $\widehat{x}=\Phi(x)$ for some $x \in X$ and $\Phi_*$ gives an isomorphism between $T_xX$ and $T_{\widehat{x}}\widehat{X}$, so $\widehat{u} = \Phi_*u$ for some $u \in T_xX$. Thus

\begin{align*}
&((\Phi^{-1})^*g)_{\widehat{x}}((\Phi_*V)_{\widehat{x}}, \widehat{u}) \\
&=((\Phi^{-1})^*g)_{\Phi(x)}((\Phi_*V)_{\Phi(x)}, \Phi_*u)  \\
&= g_x(\Phi^{-1}_*((\Phi_* V)_x), \Phi^{-1}_* (\Phi_* u)) \\
&= g_x(V_x, u) = df_x(u) \\
&= df_x \circ d\Phi^{-1}_{\Phi(x)} (\Phi_*u) \\
&= d(f \circ \Phi^{-1})_{\Phi(x)} (\Phi_*u) \\
&= d(f \circ \Phi^{-1})_{\Phi(x)} (\widehat{u}),
\end{align*}
as desired.
\end{proof}

Lemma \ref{gradvfcorres} will give bijections of trajectories on the chain level that shows $\rgh{*}{F} = \rgh{*}{\widehat{F}}$ as long as $(\Phi^{-1})^*g \in \mathcal{G}_{\widehat{F}}$. Since $\Phi$ is an isometry outside $K_M \times K_E \times K_E$, if $g$ is Euclidean outside this set, so is $(\Phi^{-1})^*g$. Lemma \ref{gradvfcorres} induces a diffeomorphism between the stable and unstable manifolds from the flows of before and after the fiber-preserving diffeomorphism, and the Smale condition holds.

\subsection{$\rgh{*}{F}$ as a Legendrian Invariant}\label{ssec:rghLeginv}
For a given Legendrian submanifold $\leg \subset J^1M$, consider the following set:
$$\lingf(\leg) \coloneqq \{ F \mid   F \text{ is a linear-at-infinity generating family for } \leg \}.$$ As we have shown in the previous section, $GH^*(F)$ is invariant under stabilization and fiber-preserving diffeomorphism, and so our interest of invariance is over equivalence classes in $\lingf(\leg)$. In general, the set $\lingf(\leg)$ is not well understood, though see \cite{chv-pushkar, f-r,henry:mcs} for some results. As one example, when $\leg$ is a Legendrian unknot  with maximal Thurston-Bennequin invariant in $\R^3$ equipped with the standard contact structure, all elements of $\lingf(\leg)$ are equivalent; see \cite{lisa-jill}. Note that in \cite{lisa-jill}, the focus was on generating families that are linear-quadratic-at-infinity.  As in Lemma \ref{lem:lin-diff} and Lemma~\ref{lem:lq-l} linear-quadratic-at-infinity functions are equivalent to linear-at-infinity ones.

Before forming an invariant of a Legendrian submanifold $\leg$ using a generating family, it is necessary to first know that the existence of a linear-at-infinity generating family persists as $\leg$ undergoes a Legendrian isotopy.  The following proposition can be shown using
Chekanov's ``composition formula'' \cite{chv:quasi-fns}; see, for example, \cite{lisa-jill}. 
 
\begin{prop} [Persistence of Generating
  Families] \label{prop:leg-persist} For $M$ compact, let $\leg^t \subset J^1M$ be an isotopy of Legendrian submanifolds for $t \in [0,1]$.  If $\leg^0$ has a linear-at-infinity generating family $F$, then $\leg^t$ lifts to a smooth path $F^t: M \times \R^N \to \R$ where $F^t$ is a generating family for $\leg^t$, $F^0$ is obtained from $F$ by stabilization, and $F^t = F^0$ outside a compact set.
\end{prop}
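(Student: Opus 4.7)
The plan is to reduce the statement to Chekanov's theorem on generating families for compactly supported contact isotopies of $J^1M$ together with his composition formula.

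First, I would extend the Legendrian isotopy $\leg^t$ to an ambient contact isotopy $\phi^t: J^1M \to J^1M$ with $\phi^0 = \id$ and $\phi^t(\leg^0) = \leg^t$. Since $M$ is compact and Legendrians in $J^1M$ are compact, the standard extension procedure (Weinstein's Legendrian neighborhood theorem plus a cutoff away from a neighborhood of $\bigcup_t \leg^t$) produces such an isotopy that is the identity outside a compact subset $K \subset J^1M$. In particular, one may assume that the contact Hamiltonian generating $\phi^t$ has compact support.

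Next, I would invoke Chekanov's result \cite{chv:quasi-fns} that every compactly supported contact isotopy of $J^1M$ admits a generating family. Concretely, there exists a smooth family $S^t: M \times \R^K \to \R$ such that $S^t$ generates $\phi^t$ in the appropriate sense (its fiber-critical set and one-jet encode the graph of $\phi^t$ inside an ambient contact manifold built from $J^1M$), with $S^0$ being a nondegenerate quadratic function $Q: \R^K \to \R$ and $S^t - Q$ compactly supported uniformly in $t$. The compactness of $M$ ensures we may take the support of $S^t - Q$ to lie inside $K_M \times K_E^{(S)}$ for compact sets $K_M \subset M$ and $K_E^{(S)} \subset \R^K$.

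Now I would apply Chekanov's composition formula: given the generating family $F: M \times \R^N \to \R$ for $\leg^0$ and the generating family $S^t$ for $\phi^t$, one defines
\[
F^t(x, e, e'', \eta) \;=\; F(x', e) + S^t(x, \eta) + \langle \xi, x-x'\rangle + \langle \sigma, e''\rangle \cdots
\]
where the additional fiber coordinates $(e'', \eta, \ldots)$ include the data used to implement the composition (this is the standard Chekanov--Sikorav ansatz where one integrates over the fiber variables in $T^*M$ via a Lagrange-multiplier trick). The key structural facts, which follow from the composition formula, are: (i) $F^t$ is a generating family for $\leg^t = \phi^t(\leg^0)$; (ii) $F^0$ is obtained from $F$ by a stabilization with $Q$ and fiber-preserving diffeomorphism, hence $F^0 \sim F \oplus Q$; and (iii) the path $F^t$ is smooth in $t$.

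Finally, to obtain $F^t = F^0$ outside a compact set, I would use that $S^t - Q$ vanishes outside $K_M \times K_E^{(S)}$ and $F$ has fixed linear-at-infinity behavior $F^c + A(e)$ outside $K_M \times K_E$. Because the composition formula only modifies $F^0$ by terms supported where $S^t - Q$ is nonzero, $F^t$ and $F^0$ agree outside a fixed compact set in $M \times \R^{N+K+\cdots}$. This in particular shows that the linear-at-infinity character of $F^0$ (obtained from Lemma \ref{lem:lq-l} applied to $F \oplus Q$) is inherited by every $F^t$.

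The main obstacle is bookkeeping: carefully defining the composition ansatz so that the fiber-critical set of $F^t$ immerses via $j_{F^t}$ precisely onto $\leg^t$, and simultaneously controlling the supports so that the ``outside a compact set'' conclusion holds uniformly in $t$. Both points are handled by the composition formula of \cite{chv:quasi-fns}, so the proof reduces to citing that formula and verifying its hypotheses in our linear-at-infinity setting, as carried out in \cite{lisa-jill}.
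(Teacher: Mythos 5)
Your proposal follows essentially the same route as the paper, which does not give an independent argument but simply derives this persistence statement from Chekanov's composition formula \cite{chv:quasi-fns} as carried out in \cite{lisa-jill}: extend the Legendrian isotopy to a compactly supported ambient contact isotopy, take Chekanov's generating family for that isotopy, and compose with $F$, checking supports to get $F^t = F^0$ outside a compact set. Your sketch supplies the same ingredients and correctly defers the bookkeeping to the cited composition formula, so it is consistent with the paper's treatment.
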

  
\begin{rem} This paper also considers generating families for compact Legendrians in $J^1\R^n$.  The Persistence Proposition still holds since these Legendrians can be thought of as living in $J^1S^n$, and the linear-at-infinity condition allows generating families for such Legendrians to be defined on the domain $S^n \times \R^N$.  \end{rem}

\begin{prop} \label{cor:end-naturality} If $\leg^t \subset J^1M$ is an isotopy of Legendrian submanifolds for $t \in [0,1]$, then for the path $F^t \in \lingf(\leg)$ as in Proposition \ref{prop:leg-persist} there exists an isomorphism $\Phi^*: GH^*(F^0) \to GH^*(F^1)$.
\end{prop}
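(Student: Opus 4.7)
The plan is to extend the continuation-map machinery developed in the proof of Proposition \ref{metricinvrgh} from the setting of a constant function with varying metric to the setting where both the difference function and the metric vary. Begin by applying the Persistence of Generating Families (Proposition \ref{prop:leg-persist}) to lift the isotopy $\leg^t$ to a smooth path $F^t: M \times \R^N \to \R$ of generating families with $F^t = F^0$ outside a compact set $K_M \times K_E$. Taking differences produces a smooth path of difference functions $w^t: M \times \R^N \times \R^N \to \R$, each of which is equivalent to a linear-at-infinity function (Lemma \ref{lem:lin-diff}) and agrees with the same linear expression outside the fixed compact set $K_M \times K_E \times K_E$; hence by Lemma \ref{wcritscmpct} all positive-valued critical points of every $w^t$ lie inside this common compact region.

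Next I would choose a smooth family of metrics $g^t \in \mathcal{G}_{F^t}$, built in the standard way: start with a family that satisfies the local Morse-normal-form (\ref{Morse}), Euclidean (\ref{Euclidean}), and standard-at-infinity (\ref{stdatinf}) conditions, and then make small $L^2$-perturbations in annular neighborhoods of critical points so that Smale transversality (\ref{Smale}) holds at the endpoints $t=0$ and $t=1$. With this data in hand, set up the auxiliary function and metric on $(M \times \R^{2N}) \times I$ exactly as in equation (\ref{nabGW}),
\begin{align*}
W(p,t) &= w^t(p) + \epsilon\bigl((1/2)t^2 - (1/4)t^4\bigr), \\
G_{(p,t)} &= g^t_p + dt^2,
\end{align*}
choosing $\epsilon/4$ smaller than the minimum positive critical value of $w^t$ uniformly in $t$; this is possible because the critical set varies compactly inside a fixed compact region. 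As in Lemma \ref{contcochain}, declare the path $\Gamma = \{(w^t,g^t)\}$ admissible if the stable and unstable manifolds for $\nabla_G W$ intersect transversely, and define
\[
\Phi_\Gamma(p) \;=\; \sum_{q \in C^{|p|}(F^1)} \#_{\Z_2}\,\M_\Gamma((p,0),(q,1))\cdot q.
\]

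To guarantee admissibility I would invoke the parametric transversality theorem for manifolds with corners (Theorem \ref{transcorners}) applied to a further perturbation of $g^t$ supported away from $t=0,1$; this gives admissibility while preserving the endpoint metrics. Once $\Phi_\Gamma$ is defined, the four structural lemmas carry over verbatim: it is a cochain map by the compactification of one-dimensional $\M_\Gamma$ trajectories (Lemma \ref{contcochain}); different admissible paths from $F^0$ to $F^1$ give chain-homotopic continuation maps via a digon argument (Lemma \ref{pathhtpy}); concatenation of paths is chain homotopic to composition of continuation maps via the triangle argument (Lemma \ref{concathtpy}); and for the constant path $\Gamma = \{(w,g)\}$ with $(w,g)$ Morse-Smale, $\Phi_\Gamma$ is the identity (Lemma \ref{constpath}). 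Applying these to the concatenation of $\Gamma$ with a reversed path $\Gamma^{-1}$, which is homotopic rel endpoints to the constant path at $(w^0, g^0)$, shows that $\Phi_\Gamma$ has a two-sided homotopy inverse on cochains, so the induced map $\Phi^*: GH^*(F^0) \to GH^*(F^1)$ is an isomorphism.

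The main obstacle I anticipate is not any single lemma but verifying that the compactness arguments underlying the broken-trajectory description of $\partial \overline{\M_\Gamma}$ remain valid when the critical points of $w^t$ undergo births and deaths along the isotopy: positive-valued critical points of $w^t$ may be created or annihilated in the interior of $[0,1]$, and I must ensure that such events do not produce extra boundary components of one-dimensional moduli spaces. This is handled by the uniform taming (all relevant flow is confined to the fixed compact set $K_M \times K_E \times K_E$ by Lemma \ref{wcritscmpct}), together with the admissibility condition, which forces the appropriate transversality with the $t$-slices; these together ensure that the only broken configurations are the expected $t=0$ and $t=1$ ones, so the standard argument closes.
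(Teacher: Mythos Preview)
Your proposal is correct and follows essentially the same approach as the paper: both run the continuation argument of Proposition~\ref{metricinvrgh} along the path $(w^t,g^t)$ coming from Persistence, with the only cosmetic difference that the paper allows the nonlinear-support compact sets $K_M^t \times K_E^t \times K_E^t$ to vary smoothly in $t$ whereas you (correctly, given the statement of Proposition~\ref{prop:leg-persist}) work with a single fixed compact set. Your added discussion of births and deaths and the parametric-transversality justification for admissibility are details the paper leaves implicit in its appeal to Proposition~\ref{metricinvrgh}.
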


 \begin{proof}
These isomorphisms may be constructed using a continuation argument as in Proposition \ref{metricinvrgh}. Given a contact isotopy and generating family, let $F^t: M\times \R^N \to \R$ be a smooth path of generating families as in Proposition \ref{prop:leg-persist}. Given $g^0 \in \mathcal{G}_{F^0}$ and $g^1 \in \mathcal{G}_{F^1}$, construct a path of metrics $g^t$ for $t \in [0,1]$ on $M \times \R^{2N}$ so that $g^t$ is standard outside the nonlinear support compact set $K_M^t \times K_E^t \times K_E^t$. These sets vary smoothly due to the smoothness of the path $F^t$. The rest of the proof proceeds as in Proposition \ref{metricinvrgh}.
\end{proof}
%

The above proof gives an isomorphism between $\rgh{*}{F^0}$ and $\rgh{*}{F^1}$ that arise as a lifted path of generating families from a Legendrian isotopy. This isomorphism is independent for paths in the homotopy class of the given path $F^t$, but given any two generating families of isotopic Legendrians, there need not be a path between them\footnote{See \cite{ss:pi-k} for some results on homotopy spaces of generating families for Legendrian submanifolds.}.

In other words, since we might not have that all elements in $\lingf(\leg)$ are equivalent, the generating family homology of a linear-at-infinity generating family $F$ is not itself an invariant of the corresponding Legendrian $\leg$. The approach in Proposition ~\ref{cor:end-naturality} gives an alternate proof to the following:

\begin{prop} [\cite{lisa-jill, lisa:links}] For a compact Legendrian submanifold $\leg$ of $J^1M$, the set of generating family cohomology groups $$\mathcal{GH}^k(\leg) = \{ GH^k([F])\mid F \in \lingf (\leg)\},$$ is invariant under Legendrian isotopy.  \end{prop}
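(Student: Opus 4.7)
The plan is to combine the three previously established results into a straightforward back-and-forth argument. Let $\leg^t \subset J^1M$, $t \in [0,1]$, be a Legendrian isotopy. I will show that $\mathcal{GH}^k(\leg^0) = \mathcal{GH}^k(\leg^1)$ as sets of isomorphism classes of vector spaces; the general statement for an isotopy joining any two Legendrians follows by taking $\leg = \leg^0$.

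First I would show the inclusion $\mathcal{GH}^k(\leg^0) \subseteq \mathcal{GH}^k(\leg^1)$. Pick any $F \in \lingf(\leg^0)$; we must produce some $\widetilde F \in \lingf(\leg^1)$ with $GH^k([F]) \cong GH^k([\widetilde F])$. Apply Proposition \ref{prop:leg-persist} (Persistence of Generating Families) to the isotopy $\leg^t$ starting from $F$: this yields a smooth path $F^t : M \times \R^N \to \R$ with $F^0$ a stabilization of $F$, with $F^t$ a generating family for $\leg^t$ for every $t$, and with $F^t$ equal to $F^0$ outside a compact set (hence $F^t$ is linear-at-infinity for each $t$, so $F^t \in \lingf(\leg^t)$). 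Now apply Proposition \ref{cor:end-naturality} to the path $F^t$ to obtain an isomorphism $\Phi^* : GH^*(F^0) \to GH^*(F^1)$. Combining this with the stabilization invariance of Lemma \ref{lem-stab} (which gives $GH^*(F^0) \cong GH^*(F)$), we conclude $GH^k([F]) \cong GH^k([F^1])$, with $[F^1] \in \lingf(\leg^1)/\!\!\sim$. This exhibits the desired element of $\mathcal{GH}^k(\leg^1)$.

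The reverse inclusion $\mathcal{GH}^k(\leg^1) \subseteq \mathcal{GH}^k(\leg^0)$ is obtained identically by running Persistence along the reversed isotopy $\leg^{1-t}$, starting from an arbitrary $F' \in \lingf(\leg^1)$. Note that neither the continuation argument in Proposition \ref{cor:end-naturality} nor the Persistence result requires a preferred direction on the parameter $t$, so reversing the isotopy is legitimate. Putting the two inclusions together gives set equality, and since the only structure being tracked is the isomorphism class of a graded $\Z_2$-vector space in each degree, this is precisely the assertion of invariance.

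I do not expect genuine obstacles here: all the real analytic work is already packaged in Propositions \ref{lem:cohom-equiv}, \ref{prop:leg-persist}, and \ref{cor:end-naturality}. The one subtlety worth being explicit about is that the isomorphism $\Phi^*$ produced by Proposition \ref{cor:end-naturality} depends a priori on the choice of path $F^t$ in $\lingf$, and different choices of $F \in \lingf(\leg^0)$ may lead to inequivalent $F^1 \in \lingf(\leg^1)$ (since elements of $\lingf(\leg)$ need not all be equivalent, as emphasized in Subsection \ref{ssec:rghLeginv}); this is exactly why one works with the set $\mathcal{GH}^k(\leg)$ rather than with a single group, and why the above argument shows only equality of sets. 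No choice of a canonical isomorphism is being claimed.
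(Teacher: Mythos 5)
Your argument is correct and is essentially the paper's own: the paper deduces this proposition from exactly the combination you use, namely Persistence (Proposition \ref{prop:leg-persist}), the continuation isomorphism of Proposition \ref{cor:end-naturality}, and invariance under equivalence (Proposition \ref{lem:cohom-equiv}), run in both directions of the isotopy, and it is careful (as you rightly are) to claim only equality of the sets $\mathcal{GH}^k$ rather than a canonical isomorphism. The only cosmetic point is that $F^0$, being a stabilization of $F$, is linear-quadratic- rather than literally linear-at-infinity, so one should invoke Lemma \ref{lem:lq-l} together with Proposition \ref{lem:cohom-equiv} to replace $F^1$ by an equivalent element of $\lingf(\leg^1)$, exactly as the paper notes.
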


\section{Extended Difference Functions}
\label{sec:extdiff}

As seen in the previous section, gradient flow lines from a single difference function are used to construct generating family cohomology groups.  To form gradient flow trees from a generating family, we will need intersecting gradient trajectories, so we use Sabloff's idea of using multiple ``extended difference functions," sketched by Henry and Rutherford in \cite{hr:dga}. In this section, we define these functions and corresponding metrics which will give an identification of gradient flow lines of these spaces with those of the original difference functions $w$.

\begin{defn}\label{extdiff}  Suppose $F: M \times \R^N \to \R$ is a generating family for $\leg$.  
Let $P_3 = M \times \R^{3N}$.
For each $1 \leq i < j \leq 3$,
the \textit{extended difference functions}
$\w{i}{j}{3}: P_3 \to \R$ are defined as
$$\w{i}{j}{3}(x, e_1, e_2, e_3) = F(x, e_i) - F(x,e_j) + \begin{cases} 
      Q(e_k), & \; k < i \text{ or } k>j \\
      -Q(e_k), & \; i<k<j 
      
   \end{cases},$$
where $Q: \R^N \to \R$ is a smooth function with exactly one nondegenerate critical point $0_Q$ of index $0$ with critical value 0. 
We also require that, outside a compact set, $Q(e_k) = e_k^2$, where $e_k^2 \coloneqq \norm{e_k}^2 = e_{k1}^2 + \cdots + e_{kN}^2$. 

The set of positive-valued critical points of $\w{i}{j}{3}$ will be denoted by $\Crit_+(\w{i}{j}{3}) \subset P_3$.
\end{defn}

\begin{rem}\label{Qissues}
\begin{enumerate}
\item The number 3 in the notation of the extended difference functions $\w{i}{j}{3}$ is a bit superfluous at this stage, but it will be useful in future work to have generalizable notation.

\item We may think of the extended functions as quadratically stabilized difference functions, that is:
$$\begin{aligned}
\w{1}{2}{3}(x, e_1, e_2, e_3) &= F(x, e_1) - F(x, e_2) + e_3^2,\\
\w{2}{3}{3}(x, e_1, e_2, e_3)  &= F(x, e_2) - F(x, e_3) + e_1^2,\\
\w{1}{3}{3} (x, e_1, e_2, e_3) &= F(x, e_1) - F(x, e_3) - e_2^2.
\end{aligned}$$
   
Because of this, we might abuse notation and refer to $0_Q$ as $0$. We will see in Subsection \ref{fibpresdif} why the more general notion of ``quadratic" stabilization is necessary. In short, we wish for the extended difference functions to remain so after precomposing with a fiber-preserving diffeomorphism. We will refer to the function $Q$ as ``quadratic-like." 
\item A similar development of generating family homology uses two Legendrians $\Lambda_i$ with generating families $F_i:M \times \R^{N_i} \to \R$ for $i=1,2$ and difference function $w:M \times \R^{N_1} \times \R^{N_2} \to \R$ defined by $w(x,e_1,e_2) = F_1(x,e_1) - F_2(x,e_2)$ whose critical points correspond to Reeb chords between $\Lambda_1$ and $\Lambda_2$ (see, for example, \cite{lisa-jill}). Assuming, after possibly stabilizing the generating families, that $N_1=N_2$, one can define similar extended difference functions and continue this paper in that setting. This is analogous to the bilinearized Legendrian contact homology of \cite{bc:bilinearized}.

\end{enumerate}
\end{rem}

Even though we will be working with multiple functions, we will form the product on $\V{}$ as in Definition \ref{gradedvs}. While the sets of positive-valued critical points of the extended difference functions $\w{i}{j}{3}$ are different, there is a natural way to identify them each with positive-valued critical points of the original difference function $w$. 

\begin{lem}\label{iotabij}
For $1 \leq i < j \leq 3$, there are bijections:
$$\iota_{i,j;3}: \crit_+(w) \to \crit_+(\w{i}{j}{3})$$
which preserve critical value. In addition, we have the following index relation:
\begin{equation}\label{eqn:index}
|p| = \ind w(p) - N = \ind \w{i}{j}{3} \left( \iota_{i,j;3} (p) \right) - (j-i)N.
\end{equation}

\end{lem}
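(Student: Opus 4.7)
The plan is to exploit the direct-sum structure of the extended difference function in its fiber coordinates, which reduces the lemma to a decoupled critical-point problem. Each $\w{i}{j}{3}$ depends on the active fiber variables $e_i, e_j$ only through the original difference function $w(x, e_i, e_j) = F(x, e_i) - F(x, e_j)$, while every remaining (``inactive'') fiber variable $e_k$, with $k \in \{1,2,3\} \setminus \{i,j\}$, appears only through a summand $\varepsilon_k Q(e_k)$, where $\varepsilon_k = +1$ for $k<i$ or $k>j$ and $\varepsilon_k = -1$ for $i<k<j$. Consequently, the critical-point system $d\w{i}{j}{3} = 0$ is equivalent to $dw(x, e_i, e_j) = 0$ together with $Q'(e_k) = 0$ for each inactive $k$. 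Since $0_Q$ is the unique critical point of $Q$, I would define $\iota_{i,j;3}$ by inserting $0_Q$ into each inactive slot of a critical point of $w$; the inverse is projection onto the active slots. Because $Q(0_Q) = 0$, the bijection preserves critical value and therefore restricts to a bijection $\crit_+(w) \to \crit_+(\w{i}{j}{3})$ as required.

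For the index identity I would compute $\hess \w{i}{j}{3}$ at $\iota_{i,j;3}(p)$ in coordinates separating the active variables $(x, e_i, e_j)$ from the inactive $e_k$'s. The direct-sum structure makes the mixed partials vanish, so the Hessian is block-diagonal: one block equals $\hess w(p)$, and each other block equals $\hess(\varepsilon_k Q)(0_Q)$. Since $0_Q$ is a nondegenerate minimum of $Q$, a $+Q$ block contributes index $0$ and a $-Q$ block contributes index $N$. Noting that $|\{k : i < k < j\}| = (j - i) - 1$ in each of the three cases, one obtains
\[
\ind \w{i}{j}{3}\bigl(\iota_{i,j;3}(p)\bigr) \;=\; \ind w(p) + N\bigl((j-i)-1\bigr) \;=\; \ind w(p) + (j-i)N - N,
\]
and the claimed identity $\ind \w{i}{j}{3}(\iota_{i,j;3}(p)) - (j - i)N = \ind w(p) - N = |p|$ follows immediately. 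As a sanity check: the cases $(i,j) = (1,2)$ and $(2,3)$ each have no $-Q$ blocks and a shift of $N$, while $(i,j) = (1,3)$ has one $-Q$ block and a shift of $2N$; all three yield $|p|$.

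I do not anticipate a genuine obstacle: the lemma is essentially bookkeeping, made clean by the hypothesis that $Q$ is a ``quadratic-like'' stabilizer with a unique nondegenerate critical point of index $0$. The only points to be careful about are keeping track of which fiber slot plays the active role for each pair $(i,j)$, and invoking the standing nondegeneracy from Proposition \ref{prop:leg-crit-point} to conclude that the resulting critical points of $\w{i}{j}{3}$ are themselves nondegenerate.
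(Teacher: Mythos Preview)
Your proposal is correct and takes essentially the same approach as the paper: define $\iota_{i,j;3}$ by inserting $0_Q$ into the inactive fiber slot(s), use $Q(0_Q)=0$ to see that critical values are preserved, and read off the index shift from the $\pm Q$ stabilization terms. The only difference is cosmetic: the paper writes out the three bijections and indices case by case, whereas you package the index computation into the single formula $\ind \w{i}{j}{3}(\iota_{i,j;3}(p)) = \ind w(p) + N((j-i)-1)$ via the block-diagonal Hessian and the count $|\{k : i<k<j\}| = (j-i)-1$; both arrive at the same conclusion.
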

\begin{proof}
The bijections are defined as follows: 
\begin{align}\label{eqn:crit-pts}
\begin{split}
 \iota_{1,2;3}: \crit_+(w) &\to \crit_+(\w{1}{2}{3}), \\
 (x, e, e') &\mapsto (x, e, e', 0_Q)\\
 \iota_{2,3;3}: \crit_+(w) &\to \crit_+(\w{2}{3}{3}), \\
 (x, e, e') &\mapsto (x, 0_Q, e, e')\\
 \iota_{1,3;3}:\crit_+(w) &\to \crit_+(\w{1}{3}{3}),\\
 (x, e, e') &\mapsto (x, e, 0_Q, e'),
 \end{split}
 \end{align}
If $(x, e, e') \in \V{\ell}$ then $(x, e, e') \in  \Crit_+{w}$ with Morse index $\ell + N$. From the definition of the extended difference functions in Definition \ref{extdiff}, we see immediately that $\iota_{i,j;3}(x, e, e') \in \Crit_+{\w{i}{j}{3}}$. The index of $\iota_{1,2;3}(x, e, e')$ and $\iota_{2,3;3}(x, e, e')$ remains $\ell +N$, while there are $N$ extra subtracted quadratic terms in the extended difference function $\w{1}{3}{3}$ so $\iota_{1,3;3}(x, e, e')$ has index $\ell +N +N = \ell + 2N$. Since we have added or subtracted terms of functions with critical value 0, the critical values will not change.
\end{proof}

\begin{rem}\label{abusenot}
Since every critical point $p$ of $w$ of positive critical value will correspond to a Reeb chord of the Legendrian $\leg$ generated by $F$, the same is true of critical points $\iota_{i,j;3}(p)$ of $\w{i}{j}{3}$. The positive critical value of a critical point $p$ (resp. $\iota_{i,j;3}(p)$) will be the
length of the corresponding Reeb chord. By an abuse of notation, we will often use $p$ to denote both $p$ and $\iota_{i,j;3}(p)$. 
\end{rem}

\begin{defn}
By Definition \ref{defn:li}, if $F:M \times \R^N \to \R$ is a linear-at-infinity generating family, then we may write $F(x,e) = F^c(x,e)+A(e)$ where $F^c:M \times \R^N \to \R$ is compactly supported on $K_M \times K_E \subseteq M \times \R^N$. We assume that $0_Q \in K_E$ and that $K_E$ is convex; if not, enlarge the compact set so that this is true. Similarly, we assume that $Q$ is quadratic outside $K_E$ (see Definition \ref{extdiff}). We call the compact set 
\begin{equation}\label{defK}
K \coloneqq K_M \times K_E \times K_E \times K_E \subset P_3 
\end{equation}
 the \textit{non-linear support} of the extended difference functions $\w{i}{j}{3}$. 
\end{defn}

\begin{rem}\label{critscmpct} 
Suppose that $F: M \times \R^N \to \R$ is a linear-at-infinity generating family that agrees with a non-zero linear
function outside $K_M \times K_E$, for compact sets $K_M \subset M$ and $K_E \subset \R^N$.   Then 
every critical point of an 
extended difference function
$w_{i,j;3}: M \times \R^{3N} \to \R$ is of the form $(x,e_1, e_2,e_3)$ where $x \in K_M$, $e_i, e_j \in K_E$, and $e_k = 0_Q$ for $k \neq i,j$. Thus, every point in $\crit_+(\w{i}{j}{3})$ is contained in the set $K$; see Lemma \ref{wcritscmpct}.
%
\end{rem}

\begin{defn}\label{EMS}
Given  $Q: \R^N \to \R$ as in Definition \ref{extdiff}, let $\mathcal{G}_Q$ denote the set of Riemannian metrics $g_Q$ on $\R^N$ such that $g_Q$ is the standard Euclidean metric outside $K_E$ and in a neighborhood of $0_Q$, the unique critical point of $Q$. 

Given $g_w \in \mathcal{G}_F$ and $g_Q \in \mathcal{G}_Q$, we define the following three ``split" metrics $g_{i,j;3}$ pointwise on $P_3$:
\begin{align*}
(g_{1,2;3})_{(x,e_1,e_2,e_3)} &= (g_w)_{(x,e_1,e_2)} + (g_Q)_{e_3} \\
(g_{2,3;3})_{(x,e_1,e_2,e_3)} &= (g_w)_{(x,e_2,e_3)} + (g_Q)_{e_1} \\
(g_{1,3;3})_{(x,e_1,e_2,e_3)} &= (g_w)_{(x,e_1,e_3)} + (g_Q)_{e_2}.
\end{align*}
\end{defn}

The metrics  $g_{i,j;3}$ from Definition \ref{EMS} produce gradient vector fields of the extended difference functions that we may express in terms of gradient vector fields of the original difference function, $w$. To see this, first recall that the extended difference functions are of the following form (see Definition \ref{extdiff}):
 $$\w{i}{j}{3}(x, e_1, e_2, e_3) = w(x, e_i, e_j) \pm Q(e_k),$$
 where $k \in \{1,2,3\}$ is such that $k \neq i,j$. 

Fix a point $p=(x,e_1,e_2,e_3) \in P_3$. Then for any permutation of $i,j,k \in \{1,2,3\}$ with $i<j$, $$T_pP_3 = T_{(x,e_i, e_j)}(M \times \R^N \times \R^N) \times T_{e_k} \R^N$$ and we have that $$d (\w{i}{j}{3})_p = dw_{(x,e_i,e_j)} \pm dQ_{e_k}.$$

Putting this all together, we have the following Lemma:

\begin{lem}\label{gradsplit}
Given $g_w \in \mathcal{G}_F$ and $g_Q \in \mathcal{G}_Q$, let $g_{i,j;3}$ as in Definition \ref{EMS}. Then, up to a reordering of coordinates, we have the following split of gradient vector fields:
$$\nabla_{g_{i,j;3}} \w{i}{j}{3} = \left( \nabla_{g_w} w, \nabla_{g_Q} Q \right).$$
\end{lem}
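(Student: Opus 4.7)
The plan is to verify this directly from the defining property of the gradient. Recall that for a smooth function $h$ on a Riemannian manifold $(X,g)$, the gradient $\nabla_g h$ is the unique vector field satisfying $g_p(\nabla_g h(p), v) = dh_p(v)$ for every $p \in X$ and every $v \in T_pX$. So my goal reduces to showing that the candidate vector field on the right-hand side satisfies this characterizing property at every point, after which uniqueness forces the equality.

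First, I would fix a point $p = (x, e_1, e_2, e_3) \in P_3$ and exploit the splitting recorded just before the lemma:
\[
T_p P_3 \;=\; T_{(x, e_i, e_j)}(M \times \R^N \times \R^N) \;\oplus\; T_{e_k}\R^N,
\]
so that every tangent vector decomposes uniquely as $v = (v_w, v_Q)$. From Definition~\ref{EMS}, the split metric $g_{i,j;3}$ is block-diagonal with respect to this decomposition, giving
\[
(g_{i,j;3})_p(v, u) \;=\; (g_w)_{(x,e_i,e_j)}(v_w, u_w) \;+\; (g_Q)_{e_k}(v_Q, u_Q).
\]
Similarly, from the expression $\w{i}{j}{3}(x,e_1,e_2,e_3) = w(x,e_i,e_j) \pm Q(e_k)$ and the chain rule, the differential decomposes as
\[
d(\w{i}{j}{3})_p(v) \;=\; dw_{(x,e_i,e_j)}(v_w) \;\pm\; dQ_{e_k}(v_Q),
\]
with the sign dictated by the position of $k$ relative to $i,j$ in Definition~\ref{extdiff}.

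Plugging the candidate $\bigl(\nabla_{g_w} w(x,e_i,e_j),\, \pm \nabla_{g_Q} Q(e_k)\bigr)$ into the block-diagonal pairing and applying the defining identity separately on each summand reproduces exactly the decomposed differential above. Uniqueness of the gradient then yields the claimed splitting, with the understanding that the sign appearing on the $Q$-component is absorbed into the notation $\nabla_{g_Q}Q$ (equivalently, that $\nabla_{g_Q}(\pm Q) = \pm \nabla_{g_Q} Q$) and that the ordering of coordinates in the output is permuted to match the ordering in $P_3$. The main obstacle, such as it is, is purely bookkeeping: one must consistently track the three cases $(i,j) \in \{(1,2),(2,3),(1,3)\}$ and the corresponding $\pm Q(e_k)$ sign so that the verification of the defining identity is unambiguous; there is no genuine analytic content beyond the standard fact that gradients on a Riemannian product split as a direct sum.
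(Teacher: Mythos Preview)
Your proposal is correct and follows essentially the same approach as the paper: verify that the candidate split vector field satisfies the defining identity $g_p(\cdot,\cdot) = d(\w{i}{j}{3})_p(\cdot)$ using the block-diagonal form of $g_{i,j;3}$ and the additive decomposition of $d(\w{i}{j}{3})$, then invoke uniqueness of the gradient. Your version is in fact slightly more careful than the paper's in tracking the $\pm$ sign on the $Q$-component.
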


\begin{proof}
Fix $g_w \in \mathcal{G}_F$ and $g_Q \in \mathcal{G}_Q$. By definition, $\nabla \w{i}{j}{3} = \nabla_{g_{i,j;3}} \w{i}{j}{3}$ is the unique vector field so that for all $p=(x,e_1,e_2,e_3) \in P_3$, $g_p(\nabla \w{i}{j}{3}, \cdot) = d (\w{i}{j}{3})_p (\cdot) = \left( dw_{(x,e_i,e_j)} + dQ_{e_k} \right) (\cdot)$. 

Let $v = (v_w,v_e) \in T_pP_3=  T_{(x,e_i, e_j)}(M \times \R^N \times \R^N) \times T_{e_k} \R^N$. We check:
\begin{align*}
&g_p\left( (\nabla_{g_w} w, \nabla_{g_Q} Q), (v_w,v_e) \right) \\
&= (g_w)_{(x, e_i, e_j)}(\nabla w, v_w) + (g_Q)_{e_k}(\nabla Q, v_e) \\
&= d w_{(x, e_i, e_j)}(v_w) + d Q_{e_k}(v_e) \\
&= d_p(\w{i}{j}{3})(v_w,v_e).
\end{align*}
which, due to the uniqueness of gradient vector fields, implies the result.
\end{proof}

The preceding Lemma shows why we chose metrics as in Definition \ref{EMS}. We must check, however, that such a choice of metric yields Morse-Smale pairs.

\begin{prop}\label{Smaleprop}
Given $g_w \in \mathcal{G}_F$ and $g_Q \in \mathcal{G}_Q$, each $(\w{i}{j}{3}, g_{i,j;3})$ satisfies the Smale condition on positive-valued critical points: for every pair of critical points $p,q \in \Crit_+{\w{i}{j}{3}}$, the unstable and stable manifolds of $p$ and $q$ have a transverse intersection.
\end{prop}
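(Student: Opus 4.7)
The key is to exploit the product splitting of the gradient vector field in Lemma \ref{gradsplit}. Writing $\w{i}{j}{3} = w(x,e_i,e_j) \pm Q(e_k)$ with the metric $g_{i,j;3} = g_w + g_Q$, the flow of $\nabla_{g_{i,j;3}}\w{i}{j}{3}$ on $P_3$ is (up to reordering of coordinates) the product of the flow of $\nabla_{g_w} w$ on $M \times \R^{2N}$ with the flow of $\nabla_{g_Q}(\pm Q)$ on $\R^N$. Consequently, for any $p \in \Crit_+(\w{i}{j}{3})$, Remark \ref{critscmpct} identifies $p$ with a pair $(\tilde p, 0_Q)$ where $\tilde p \in \Crit_+(w)$, and the stable/unstable manifolds split as
\begin{equation*}
W^{\pm}_p(\w{i}{j}{3}) \;=\; W^{\pm}_{\tilde p}(w) \,\times\, W^{\pm}_{0_Q}(\pm Q),
\end{equation*}
with an analogous statement for $q \in \Crit_+(\w{i}{j}{3})$.

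Next I would analyze the factor corresponding to $0_Q$. Since $Q$ has $0_Q$ as its unique nondegenerate critical point of index $0$, the function $+Q$ has $\dim W^-_{0_Q}(+Q) = N$ and $\dim W^+_{0_Q}(+Q) = 0$; for $-Q$ the roles are swapped. In both cases one of these manifolds is an open neighborhood of $0_Q$ in $\R^N$ (the assumption that $g_Q$ is standard Euclidean near $0_Q$ makes this immediate via the Morse chart), while the other is the single point $\{0_Q\}$. In either case their intersection is $\{0_Q\}$, so any intersection point of $W^-_p(\w{i}{j}{3})$ with $W^+_q(\w{i}{j}{3})$ has the form $(y, 0_Q)$ for some $y \in W^-_{\tilde p}(w) \cap W^+_{\tilde q}(w)$.

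At such a point $(y, 0_Q)$, the tangent sum becomes
\begin{equation*}
T_{(y,0_Q)} W^-_p(\w{i}{j}{3}) + T_{(y,0_Q)} W^+_q(\w{i}{j}{3})
\;=\; \bigl(T_y W^-_{\tilde p}(w) + T_y W^+_{\tilde q}(w)\bigr) \,\times\, T_{0_Q}\R^N,
\end{equation*}
because whichever of $W^-_{0_Q}(\pm Q)$ or $W^+_{0_Q}(\pm Q)$ is top-dimensional contributes all of $T_{0_Q}\R^N$. The Smale condition on $g_w \in \mathcal{G}_F$ (Definition \ref{wEMS}) guarantees $T_y W^-_{\tilde p}(w) + T_y W^+_{\tilde q}(w) = T_y(M \times \R^{2N})$, so the full sum is all of $T_{(y,0_Q)}P_3$, yielding transversality.

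The only real subtlety is checking that $0_Q \in W^-_{0_Q}(Q) \cap W^+_{0_Q}(Q)$ is the sole intersection point in the $Q$-factor and that the transversality at $0_Q$ inside $\R^N$ follows from $g_Q$ being Euclidean in a neighborhood of $0_Q$; this is the Morse-chart computation and I do not foresee any further obstacle.
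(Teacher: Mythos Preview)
Your proposal is correct and follows essentially the same approach as the paper: both use Lemma~\ref{gradsplit} to split the stable and unstable manifolds as products $W^\pm_{\tilde p}(w) \times W^\pm_{0_Q}(\pm Q)$, observe that any intersection point must have $e_k = 0_Q$, and then conclude transversality by combining the Smale condition for $(w,g_w)$ on the first factor with the fact that one of $W^-_{0_Q}(\pm Q)$, $W^+_{0_Q}(\pm Q)$ is full-dimensional on the second. If anything, you are slightly more explicit than the paper about tracking the sign $\pm Q$ across the three cases $(i,j) = (1,2),(2,3),(1,3)$.
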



\begin{proof}
Fix $p = \iota_{i,j;3}(p')$ and $q = \iota_{i,j;3}(q')$ for $p',q' \in \V{}$, and suppose $a=(x,e_1,e_2,e_3) \in W^-_{p}(\w{i}{j}{3}) \cap W^+_{q}(\w{i}{j}{3})$. 

Lemma \ref{gradsplit} implies that the flow $\Psi$ of $\nabla_{g_{i,j;3}}\w{i}{j}{3}$ on $P_3$ may be expressed as $\Psi=(\Psi_w,\Psi_Q)$, where $\Psi_w$ is the flow of $\nabla_{g_w}w$ and $\Psi_Q$ is the flow of $\nabla_{g_Q}Q$. This implies that $W^-_{p}(\w{i}{j}{3}) = W^-_{p'}(w) \times W^-_0(Q)$ and $W^+_{q}(\w{i}{j}{3}) = W^+_{q'}(w) \times W^+_0(Q_{i,j;3})$, where we use 0 to represent $0_Q$, the only critical point of $Q$. 

Thus, we have that
\begin{align*}
&T_aW^-_{p}(\w{i}{j}{3}) + T_aW^+_{q}(\w{i}{j}{3}) \\
&=T_a(W^-_{p'}(w) \times W^-_0(Q)) + T_a( W^+_{q'}(w) \times W^+_0(Q)) \\
&=\left( T_{(x,e_i,e_j)}W^-_{p'}(w) \times T_{e_k}W^-_0(Q) \right) + \left( T_{(x,e_i,e_j)}W^+_{q'}(w) \times T_{e_k}W^+_0(Q) \right) \\
&=\left(T_{(x,e_i,e_j)}W^-_{p'}(w) + T_{(x,e_i,e_j)}W^+_{q'}(w) \right) \times \left(T_{e_k}W^-_0(Q) + T_{e_k}W^+_0(Q) \right) \\
&= T_{(x,e_i,e_j)}(M \times \R^N \times \R^N) \times \left(T_{e_k}W^-_0(Q) + T_{e_k}W^+_0(Q) \right),
\end{align*}
where the first term in the final equivalence is our assumption that $(w,g)$ satisfies the Smale condition for points in $\crit_+(w)$. For the second term, note that $e_k \in W^-_0(Q) \cap W^+_0(Q)$ implies that $e_k =0_Q$. Since  $T_0W^-_0(Q) + T_0W^+_0(Q) =T_0\R^N = \R^N$, we have that 
$$T_aW^-_{p}(\w{i}{j}{3}) + T_aW^+_{q}(\w{i}{j}{3}) =  T_{(x,e_i,e_j)}(M \times \R^N \times \R^N) \times T_{e_k}\R^N = T_aP_3,$$
as desired.
\end{proof}

We can now define infinite trajectory spaces of the extended difference functions.

\begin{defn}\label{inftraj}
For $p_-, p_+ \in \crit_+(w)$, the \dfn{unbroken infinite Morse trajectory space} between $p_-$ and $p_+$ is
\begin{align*}
\Mtofro{i}{j}{p_-}{p_+} \coloneqq  \{  &\gamma: (- \infty, \infty) \rightarrow P_3 \mid \dot{\gamma} = \nabla_{g_{i,j;3}} \w{i}{j}{3}, \\ &\lim_{t \to - \infty} \gamma(t) = \iota_{i,j,3}(p_-), \lim_{t \to \infty} \gamma(t) = \iota_{i,j,3}(p_+)\}  / \R, 
\end{align*}
where $/ \R$ denotes quotienting by the action of $\R$ that takes $\gamma(t)$ to $\gamma(t+a)$ for $a \in \R$.
\end{defn}

Given the correspondence of the (positive-valued) critical points of $w$ and the extended difference functions $\w{i}{j}{3}$, we would like there to also be a correspondence of gradient flow lines. This is where we see the benefit of choosing our metrics $g_{i,j;3}$ to be ``split" as in Definition \ref{EMS}.

\begin{prop}
For appropriate choice of metrics, there are bijections
$$\M(p,q) \leftrightarrow \M_{i,j;3}(p,q)$$
for each $1 \leq i < j \leq 3$.
\end{prop}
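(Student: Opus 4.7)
The plan is to use the split structure of the metrics $g_{i,j;3}$ from Definition \ref{EMS} to decompose every trajectory of $\nabla\w{i}{j}{3}$ into a trajectory of $\nabla w$ and a trajectory of $\pm \nabla Q$, and then show that the $Q$-component is forced to be constant.

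First I would fix a metric $g_w \in \mathcal{G}_F$ and $g_Q \in \mathcal{G}_Q$ and form the corresponding $g_{i,j;3}$. By Lemma \ref{gradsplit}, up to reordering of coordinates, $\nabla_{g_{i,j;3}}\w{i}{j}{3} = (\nabla_{g_w} w, \pm \nabla_{g_Q} Q)$, where the sign of $Q$ matches the sign appearing in Definition \ref{extdiff}. Consequently the flow $\Psi$ of $\nabla\w{i}{j}{3}$ on $P_3$ factors as $\Psi = (\Psi_w, \Psi_{\pm Q})$, and a trajectory
$$\gamma(t) = \bigl(\gamma_w(t), \gamma_Q(t)\bigr) \in P_3$$
is a positive gradient trajectory of $\w{i}{j}{3}$ if and only if $\gamma_w$ is a positive gradient trajectory of $w$ and $\gamma_Q$ is a positive gradient trajectory of $\pm Q$.

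Next I would use the form of $\iota_{i,j;3}$ described in Lemma \ref{iotabij}: both endpoints of a trajectory in $\Mtofro{i}{j}{p}{q}$ have $k$-th fiber coordinate equal to $0_Q$, the unique critical point of $Q$. Thus $\gamma_Q(t) \to 0_Q$ as $t \to \pm\infty$. Because $0_Q$ is the only critical point of $Q$ (equivalently of $-Q$) and $\pm Q$ is strictly monotone along non-constant positive gradient trajectories of $\pm Q$, the equality of limits $\pm Q(0_Q) = \pm Q(0_Q)$ forces $\gamma_Q$ to be constant at $0_Q$. This is the conceptual heart of the argument, and I expect it to be the main obstacle only in the bookkeeping sense — one has to be careful that it works equally well whether the sign is $+Q$ (so $0_Q$ is a strict minimum of $\pm Q$) or $-Q$ (so $0_Q$ is a strict maximum), and in both cases strict monotonicity rules out any non-constant loop.

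Given this, I would define the map
$$\Phi_{i,j;3}: \M(p,q) \to \Mtofro{i}{j}{p}{q}, \qquad [\gamma_w] \mapsto \bigl[(x(t),\tilde{e}_1(t),\tilde{e}_2(t),\tilde{e}_3(t))\bigr]$$
where $\tilde{e}_i(t), \tilde{e}_j(t)$ come from $\gamma_w(t)=(x(t),e_i(t),e_j(t))$ placed in slots $i,j$ and $\tilde{e}_k(t)\equiv 0_Q$. The inverse is given by projection: erase the $k$-th fiber (which the previous paragraph shows is identically $0_Q$) and keep the remaining data, producing an element of $\M(p,q)$. Both constructions intertwine the $\R$-reparametrization actions, so they descend to mutually inverse maps on the quotients, yielding the desired bijection. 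Finally, since $GH^*(F)$ is independent of the choice of metric in $\mathcal{G}_F$ (Proposition \ref{metricinvrgh}), this ``appropriate choice'' of metric is harmless for subsequent applications.
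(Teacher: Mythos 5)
Your proposal is correct and follows essentially the same route as the paper: split the gradient via Lemma \ref{gradsplit}, observe that the $Q$-component of any trajectory in $\M_{i,j;3}(p,q)$ must be the constant trajectory at $0_Q$, and identify the remaining data with a trajectory of $w$. If anything, your argument is slightly more careful than the paper's, which asserts that the constant trajectory is the ``only gradient trajectory'' of $Q$; your version correctly pins this down by noting that both asymptotic limits are $0_Q$ and strict monotonicity of $\pm Q$ along nonconstant trajectories forces constancy.
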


\begin{proof}
Express $\gamma: \R \to M \times \R^N \times \R^N \in \M(p,q)$ as $\gamma(t) = \left( a(t), b_1(t), b_2(t) \right)$ for $a:\R \to M$ and $b_1,b_2: \R \to \R^N$. Define paths $\gamma_{i,j;3}: \R \to P_3$ by
\begin{align*}
\gamma_{1,2;3}(t)&= \left(a(t), b_1(t), b_2(t), 0_Q \right) \\
\gamma_{2,3;3}(t)&= \left(a(t), 0_Q, b_1(t), b_2(t) \right) \\
\gamma_{1,3;3}(t)&= \left(a(t), b_1(t), 0_Q, b_2(t) \right) .
\end{align*}
We claim that $\gamma_{i,j;3} \in \M_{i,j;3}(p,q)$ and that this identification defines a bijection (that is, up to reparametrization, all trajectories in $\M_{i,j;3}(p,q)$ are of this form). 

Lemma \ref{gradsplit} implies that gradient trajectories of $\w{i}{j}{3}$ may be written in terms of a gradient trajectory of $w$ and one of $Q:\R^N \to \R$. Since $0_Q$ is the only critical point of $Q$, the constant trajectory at $0_Q$ is its only gradient trajectory.
\end{proof}

\section{Moduli Space of Gradient Flow Trees}\label{sec:moduli}

We will study positive gradient flow lines of the extended difference functions $\w{i}{j}{3}$ with respect to metrics as in Definition \ref{EMS}.  Gradient flow lines are well-studied objects in Morse Theory, and we will work with moduli spaces of intersecting flow lines, which we will refer to as \textit{gradient flow trees}. Understanding the structure of these spaces will play an integral role in defining our product.

In particular, to define products with correct properties, we will need to show that our moduli spaces are smooth manifolds with certain compactification properties. We consider gradient flow trees consisting of three half-infinite gradient trajectories, one from each extended difference function, that limit to critical points at their infinite ends and intersect at their finite ends. To achieve transversality of this intersection, we consider trees that ``almost" intersect at their finite ends, up to a small fixed vector at each finite end.

 \begin{figure}[htb]
\includegraphics[width=1 \textwidth]{y_tree_arx.pdf} 
\caption{A gradient flow tree with three intersecting half-infinite trajectories.}
\label{ytree}
\end{figure} 

To define the space of flow trees, we use results from Wehrheim \cite{wehrheim:morse} in which spaces of broken, half-infinite gradient trajectories of a Morse-Smale pair $(f,g)$ are equipped with a metric space structure and shown to be smooth manifolds with corners. In Wehrheim's setup, $f$ is a Morse function on closed manifold. Our setup differs in that our functions are Morse-Bott rather than Morse and defined on a noncompact space. However, the critical points we are interested in, the positive-valued ones, are isolated and contained in a compact set. This will allow us to form a compact space of ``broken flow trees," i.e., flow trees made up of broken trajectories, and we will show that space has the structure of a smooth manifold with corners. Its 0-stratum will be the desired space of unbroken flow trees.


Our strategy will be to first define an ambient space $\overline{X}$ consisting of triples of broken and unbroken trajectories. We will quote results from \cite{wehrheim:morse} to show that $\overline{X}$ is metric space and has the structure of a smooth manifold with corners. Because our functions are not defined on closed manifolds, we cannot say that $\overline{X}$ is compact. We can, however, view the space of flow trees as a closed subset of an open neighborhood in $\overline{X}$ whose closure is compact. We then perturb in this neighborhood, retaining compactness, to ensure that the space of flow trees is a compact smooth manifold with corners.

To proceed with this strategy, we build a few choices into our construction, explained in the following remark:

\begin{rem}\label{shrinkfiber}
Lemma \ref{critscmpct} implies that there are only a finite number of critical points with positive critical value since such points are isolated. Thus, we know that there exists a smallest positive critical value 
\begin{equation}
\rho \coloneqq \rho_F = \min \{w(p) \; : \; p \in \V{}, w(p)>0 \}. 
\end{equation}
 To prove certain results in Section \ref{sec:moduli}(see Lemma \ref{awayfrom0}), we need to use this fact to build a couple of choices into our construction:
\begin{enumerate}
\item We shrink the fiber coordinates in the following way: we apply a fiber-preserving diffeomorphism to $P_3$ that is the identity outside of $K$ and so that every point $y = (x, e_1, e_2, e_3) \in K \subset P_3$ is such that $Q(e_1) + Q(e_2) -Q(e_3) < \rho$. In Section \ref{secinvar} we will see that the product is invariant under fiber preserving diffeomorphism, so this choice will not affect the outcome.
\item Since the set $K$ from Definition \ref{defK} is compact, each $\w{i}{j}{3}|_K$ is uniformly continuous. In particular, for $\rho$ as above, there exists $\delta = \min\{\delta_{1,2;3}, \delta_{2,3;3}, \delta_{1,3;3}\} > 0$ such that for all $y_1, y_2 \in K$, $|y_1 - y_2| < \delta_{i,j;3} $  implies that $|\w{i}{j}{3}(y_1) - \w{i}{j}{3}(y_2)| < \frac{\rho}{4}.$  
\end{enumerate}
\end{rem}

\begin{defn}\label{halfinftraj}
%

The \textit{unbroken half-infinite Morse trajectory spaces} to/from a critical point $p \in \crit_+(w)$ are defined as:
$$\Mto{i}{j}{3}{p} \coloneqq \{ \gamma: [0, \infty) \rightarrow P_3 \mid \dot{\gamma} = \nabla \w{i}{j}{3}, \lim_{t \to \infty} \gamma(t) = \iota_{i,j,3}(p) \} \text{ and} $$
$$\Mfro{i}{j}{3}{p} \coloneqq \{ \gamma: (-\infty, 0] \rightarrow P_3 \mid \dot{\gamma} = \nabla \w{i}{j}{3}, \lim_{t \to -\infty} \gamma(t) = \iota_{i,i,;3}(p) \}. $$
\end{defn}

\begin{rem}\label{smothstr}
\begin{enumerate}
\item The sets in Definitions \ref{halfinftraj} and \ref{inftraj} inherit smooth structures from unstable and stable manifolds:
\begin{align*}
\Mtofro{i}{j}{p_-}{p_+} &\cong \left( W_{p_-}^-(\w{i}{j}{3}) \cap W_{p_+}^+(\w{i}{j}{3}) \right) / \R,\\
\Mto{i}{j}{3}{p} &\cong W_p^+(\w{i}{j}{3}),\\
\Mfro{i}{j}{3}{p} &\cong W_p^-(\w{i}{j}{3}).
\end{align*} 
\item Quotienting by reparametrization is not needed for half-infinite trajectories because the finite endpoint at 0 and hence the image of the trajectory changes under reparametrization. 

\item We may restrict where the finite end of the half-infinite trajectories lies. For $U \subset P_3$, we use $\M_{i,j;3}(U,p)$ to be the subset of $\Mto{i}{j}{3}{p}$ where $\gamma(0) \in U$. Then $\M_{i,j;3}(U,p) \cong W_p^+(\w{i}{j}{3}) \cap U$. A similar statement holds for $\M_{i,j;3}(p,U)$.
\end{enumerate}

\end{rem}

We will also consider broken half-infinite Morse trajectory spaces. These are sequences of trajectories, one of which is half-infinite and the rest infinite. To ease notation, let $\{ \mathcal{U}_-, \mathcal{U}_+ \} = \{ p, P_3 \}$ or $\{ p, U \}$. That is, we will consider sequences where one endpoint is a critical point and the other is a point in $P_3$ or $U$. 

\begin{defn}\label{defstrata}
 We define the \textit{$\ell$-fold broken half-infinite trajectories} to be
$$\overline{\M}_{i,j;3}(\mathcal{U}_-, \mathcal{U}_+)_\ell \coloneqq \bigcup \M_{i,j;3}(\mathcal{U}_-,p_1) \times \M_{i,j;3}(p_1,p_2) \times \dots \times \M_{i,j;3}(p_{\ell}, \mathcal{U}_+),$$
where the union is taken over sequence of critical points $p_1, \dots, p_{\ell} \in \crit_+(w)$ such that $\M_{i,j;3}(\mathcal{U}_-,p_1), \M_{i,j;3}(p_1,p_2), \dots ,\M_{i,j;3}(p_{\ell}, \mathcal{U}_+) \neq \emptyset$.
\end{defn}

\begin{defn}\label{gentraj}
The \textit{generalized Morse trajectory space} is
$$\overline{\M}_{i,j;3}(\mathcal{U}_-, \mathcal{U}_+) \coloneqq \bigcup_{\ell \in \mathbb{N}} \overline{\M}_{i,j;3}(\mathcal{U}_-, \mathcal{U}_+)_\ell.$$
We will use $\overline{\gamma} = \{\gamma_1, \dots , \gamma_\ell\}$ to denote an element of $\overline{\M}_{i,j;3}(\mathcal{U}_-, \mathcal{U}_+)$.
\end{defn}

\begin{rem}
The union in Definition \ref{gentraj} is finite: the space $\V{}$ is generated by a finite set of critical points and all critical points live in a compact subset of $P_3$ (see Lemma \ref{critscmpct}). The finite ends of the generalized trajectories may leave the non-linear support set $K$, so these spaces are not necessarily contained in a compact set.
\end{rem}

To form the space $\overline{X}$ we will use specific choices of the pair $(\mathcal{U}_-, \mathcal{U}_+)$ for each pair $(i,j)$ with $1 \leq i < j \leq 3$. Recall that we are interested in forming gradient flow trees with two branches that follow the flow from two positive-valued critical points along positive gradient vector fields of $\w{1}{2}{3}$and $\w{2}{3}{3}$ and a branch following the flow of a positive gradient vector field of $\w{1}{3}{3}$ to a positive-valued critical point. (See Figure \ref{ytree}). As we are only interested in broken trajectories that break at positive-valued critical points, we must restrict the finite end of the trajectory flowing to a critical point along $\nabla \w{1}{3}{3}$ to be positive. For our purposes, we use
 $$( \mathcal{U}_-, \mathcal{U}_+ ) = ( p_1, P_3 ), ( p_2, P_3 ), \text{ or } \left(\{ \w{1}{3}{3} > \frac{\rho}{8} \}, p_0\right),$$
 where $\rho$ is the least positive critical value of $w$, as defined in Remark \ref{shrinkfiber}. \\
 
  In fact, with the fiber-preserving diffeomorphism we have performed as described in Remark \ref{shrinkfiber}, this is not a restricting choice. The following Lemma shows that the trajectories that will make up gradient flow trees are contained in the specified half-infinite trajectory spaces.
 
\begin{lem}\label{awayfrom0pre}
Let $p_1, p_2 \in \V{}$. If $(\gamma_1, \gamma_2, \gamma_3) \in \M_{1,2;3}(p_1, P_3) \times \M_{2,3;3}(p_2, P_3) \times \M_{1,3;3}(P_3, p_0)$ satisfies $\gamma_1(0) = \gamma_2(0) =\gamma_3(0)$, then $\gamma_3(0) > \rho >0$, where $\rho$ is the least positive critical value of $w$. In particular, this shows that $p_0 \in \V{}$ as well.
\end{lem}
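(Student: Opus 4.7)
The plan is to combine the monotonicity of positive gradient flow along each of the three half-infinite trajectories with an exact algebraic cancellation among the three extended difference functions at the common intersection point.

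First, since $\gamma_1$ is a positive gradient trajectory of $\w{1}{2}{3}$ with $\lim_{t\to-\infty}\gamma_1(t)=\iota_{1,2;3}(p_1)$, monotonicity of the function value along the positive gradient flow gives $\w{1}{2}{3}(y)\ge \w{1}{2}{3}(\iota_{1,2;3}(p_1))=w(p_1)\ge\rho$, where $y=\gamma_1(0)=\gamma_2(0)=\gamma_3(0)$; the equality $\w{1}{2}{3}(\iota_{1,2;3}(p_1))=w(p_1)$ uses $Q(0_Q)=0$ from Definition \ref{extdiff}. The same reasoning applied to $\gamma_2$ yields $\w{2}{3}{3}(y)\ge w(p_2)\ge\rho$.

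Second, a direct computation from the formulas in Definition \ref{extdiff} shows that all $F$-terms cancel in the combination below, producing the pointwise identity
\[
\w{1}{2}{3}(y)+\w{2}{3}{3}(y)-\w{1}{3}{3}(y)=Q(e_1)+Q(e_2)+Q(e_3)
\]
at any $y=(x,e_1,e_2,e_3)\in P_3$. Combined with the first step, this rearranges to
\[
\w{1}{3}{3}(y)\ge 2\rho-\bigl(Q(e_1)+Q(e_2)+Q(e_3)\bigr).
\]
Granted that $y\in K$, the fiber-preserving diffeomorphism fixed in Remark \ref{shrinkfiber}(1) was arranged precisely so that the relevant $Q$-sum at any point of $K$ is strictly bounded by $\rho$, whence $\w{1}{3}{3}(y)>\rho$. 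Finally, since $\gamma_3$ is a positive gradient trajectory of $\w{1}{3}{3}$ from $y$ to $\iota_{1,3;3}(p_0)$, monotonicity gives $w(p_0)=\w{1}{3}{3}(\iota_{1,3;3}(p_0))\ge\w{1}{3}{3}(y)>\rho>0$, so in particular $p_0\in\crit_+(w)=\V{}$, completing the proof.

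The main obstacle I anticipate is verifying that the common intersection point $y$ really lies in $K$. All isolated critical points of $w$ lie in $K$ by Remark \ref{critscmpct}, so each of the three trajectories begins or ends in $K$; nevertheless, a half-infinite trajectory could a priori leave $K$ between its endpoints. My approach will be to exploit the splitting of gradient vector fields from Lemma \ref{gradsplit}, which decouples each extended flow into a $w$-component in the relevant $(x,e_i,e_j)$-coordinates and an independent $\pm\nabla Q$-flow on the remaining fiber factor. Combining this splitting with the linear-at-infinity structure of $F$ (outside $K_M\times K_E$ the $w$-part has no critical points and pushes off to infinity along a fixed linear direction) with the quadratic growth of $Q$ outside $K_E$ should force the $w$-projections of $\gamma_1,\gamma_2,\gamma_3$ to remain in $K_M\times K_E\times K_E$, from which $y\in K$ follows. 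Once this confinement is established, the inequalities from the two steps above assemble directly into the stated conclusion.
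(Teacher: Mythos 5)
Your main chain of inequalities is exactly the paper's proof of this lemma: monotonicity of $\w{1}{2}{3}$ and $\w{2}{3}{3}$ along the positive gradient flow from $p_1,p_2$, the pointwise identity relating the three extended difference functions at $y$, the bound on the $Q$-terms over $K$ arranged in Remark \ref{shrinkfiber}, and monotonicity along $\gamma_3$ to get $w(p_0)>\rho$, hence $p_0\in\V{}$. (Your identity $\w{1}{2}{3}+\w{2}{3}{3}-\w{1}{3}{3}=Q(e_1)+Q(e_2)+Q(e_3)$ is the correct one; the displayed identity in the paper's proof and the inequality in Remark \ref{shrinkfiber} carry sign typos, and since $Q\ge 0$ your reading of the fiber-shrinking condition is the right intended one.)

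The one place you go beyond the paper's own write-up --- justifying that the common point $y$ lies in $K$, which is needed to invoke Remark \ref{shrinkfiber} --- is where your proposed argument would fail. You suggest confining each trajectory separately: that the splitting of Lemma \ref{gradsplit} together with the linear-at-infinity structure of $F$ and the growth of $Q$ forces the $w$-projection of each $\gamma_i$ to stay in $K_M\times K_E\times K_E$. This is not true trajectory-by-trajectory: for $\gamma_1$ and $\gamma_2$ the far endpoint is the unknown point $y$ itself, and a half-infinite positive gradient trajectory emanating from a critical point of $\w{i}{j}{3}$ can perfectly well leave $K$ and escape to infinity (its unstable manifold is not contained in $K$); similarly the $Q$-factor of $\gamma_1$ starts at the minimum $0_Q$ and can flow arbitrarily far, so even confinement of the $w$-projections would not control the remaining fiber coordinate of $y$. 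What actually forces $y\in K$ is a joint argument using two trajectories at a time, carried out in the paper's Lemma \ref{lem:cpct-set} (whose proof is independent of this lemma): outside the nonlinear support, the $x$-component of every $\nabla\w{i}{j}{3}$ vanishes when $x\notin K_M$, and whenever a fiber coordinate $e_i$ (resp.\ $e_j$) leaves $K_E$ the corresponding component of $\nabla\w{i}{j}{3}$ is the constant $c$ (resp.\ $-c$); if some coordinate of $y$ lay outside $K_M$ or $K_E$, comparing the two trajectories that share that coordinate --- one flowing out of $K$ toward $y$, the other flowing from $y$ back toward a critical point in $K$ --- yields incompatible constant velocities, using also that a trajectory which leaves $K$ cannot re-enter and that $K_E$ is convex. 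So either cite Lemma \ref{lem:cpct-set} for $y\in K$ or reproduce that pairwise case analysis; the per-trajectory confinement claim as you state it is false.
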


\begin{proof}
Let $$y=(x^y,e_1^y,e_2^y,e_3^y)= \gamma_1(0) = \gamma_2(0) =\gamma_3(0). $$

Since each $\gamma_i$ follows a positive gradient flow of an extended difference function, we have that $\w{1}{2}{3}(y) \geq \w{1}{2}{3}(p_1)$, $\w{2}{3}{3}(y) \geq \w{2}{3}{3}(p_2),$ and $\w{1}{3}{3}(y) \leq \w{1}{3}{3}(p_0).$

By construction of the extended difference functions,
$$\w{1}{3}{3}(y) = \w{1}{2}{3}(y) + \w{2}{3}{3}(y) - \left(Q(e_3^y) + Q(e_1^y) - Q(e_2^y)\right).$$

We now make use of a choice we built into our constructions of $\modsp$; see Remark \ref{shrinkfiber}. 
\begin{align*}
 &\w{1}{3}{3}(y) \\
&=  \w{1}{2}{3}(y) + \w{2}{3}{3}(y) - \left(Q(e_3^y) + Q(e_1^y) - Q(e_2^y)\right) \\
&> \w{1}{2}{3}(p_1)+ \w{2}{3}{3}(p_2) - \rho \\
&\geq 2\rho - \rho  > 0. \qedhere
\end{align*} 

\end{proof} 

\begin{rem}
\begin{enumerate}
\item It sufficed to state the preceding lemma in terms of unbroken trajectories, but the result holds for broken trajectories as well, since an unbroken tree exists around the intersection point of three broken trajectories. 
\item While a larger bound, like $\dfrac{\rho}{2}$, would suffice at this stage, we build our trajectory spaces using $\overline{\M}_{1,3;3}\left(\left\lbrace\w{1}{3}{3} > \dfrac{\rho}{8}\right\rbrace, p_0\right)$ to have space to perturb for transversality.
\end{enumerate}
\end{rem}

There is a natural metric on $\overline{\M}_{i,j;3}(\mathcal{U}_-, \mathcal{U}_+)$:

\begin{defn}\label{dfn-metric}
On $\overline{\M}_{i,j;3}(\mathcal{U}_-, \mathcal{U}_+)$ consider the metric $d_{\overline{\M}}$ which is the Hausdorff distance on the images of broken trajectories. 

$$ d_{\overline{\M}}(\underline{\gamma}, \underline{\gamma'}) \coloneqq d_{\text{Haus}}(\overline{\text{im}\underline{\gamma}}, \overline{\text{im}\underline{\gamma'}} ) . $$

By $\overline{\text{im}\underline{\gamma}}$, we mean the closure of the union of the images of trajectories that make up the trajectory sequence $\underline{\gamma}$. Recall that the Hausdorff distance $d_{\text{Haus}}$ is a metric on non-empty compact subsets of a space defined by
$$d_{\text{Haus}}(A,B) = \text{max} \left\{ \adjustlimits\sup_{a\in A} \inf_{b\in B} \text{d}(a,b), \adjustlimits\sup_{b\in B} \inf_{a\in A}\text{d}(a,b)\right\}.$$


\end{defn}

\begin{thm}\cite[Theorem 2.3]{wehrheim:morse}  \label{KW} For $p_0, p_1, p_2 \in \crit_+(w)$, the half-infinite generalized trajectory spaces  $\left( \overline{\M}_{1,2;3}(p_1, P_3) , d_{\overline{\M}} \right)$, $\left( \overline{\M}_{2,3;3}(p_2, P_3) , d_{\overline{\M}} \right)$, and $\left( \overline{\M}_{1,3;3}(\{\w{1}{3}{3} > \dfrac{\rho}{8}\}, p_0) , d_{\overline{\M}} \right)$ are locally compact separable metric spaces that can be equipped with the structure of a smooth manifold with corners.  In each case, the $\ell$-stratum is $\overline{\M}_{i,j;3}(\mathcal{U}_-, \mathcal{U}_+)_\ell$ for the same choices $(\mathcal{U}_-, \mathcal{U}_+)$.
\end{thm}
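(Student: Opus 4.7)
The strategy is to reduce to Wehrheim's Theorem 2.3 in \cite{wehrheim:morse}, which furnishes this conclusion for Morse functions on closed manifolds. Two discrepancies must be bridged: (a) each $\w{i}{j}{3}$ is Morse--Bott, carrying an additional critical submanifold at zero level, and (b) the ambient space $P_3$ is non-compact. For (a), I would observe that our trajectory spaces only involve positive-valued critical points, which by Proposition \ref{prop:leg-crit-point} are isolated and nondegenerate and whose critical values are bounded below by the gap $\rho > 0$; conditions (\ref{Morse})--(\ref{Euclidean}) of Definition \ref{wEMS} combined with Lemma \ref{gradsplit} supply a standard Morse chart with Euclidean gradient near each such point in $\iota_{i,j;3}(\crit_+(w))$, which is precisely the local model used throughout Wehrheim's analysis. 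For (b), linearity at infinity of $F$ together with the standard Euclidean metric outside $K$ (Definition \ref{EMS}) constrains the relevant trajectories to a compact enlargement of $K$: any trajectory which escaped would be asymptotic to a nonzero linear gradient on a linear piece of $\w{i}{j}{3}$ and could not then return to a positive-valued critical point, nor could its finite end satisfy $\w{1}{3}{3} > \rho/8$ at a point in $K$.

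With the setting reduced to a compact picture, the metric space properties fall out directly. The Hausdorff distance $d_{\overline{\M}}$ separates points because distinct (broken) trajectories have distinct closed images, using injectivity of the flow parametrization on each segment and the asymptotic anchoring at the critical endpoints. Separability is inherited from the underlying smooth structure of the stable and unstable manifolds (Remark \ref{smothstr}), and local compactness follows from an Arzel\`a--Ascoli argument applied to the trajectory segments, whose $\w{i}{j}{3}$-values are a priori pinched between $\rho/8$ (for the constrained endpoint in the $(1,3)$-case) or $0$ and the maximum critical value appearing in the broken configuration.

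The core of the argument, and the main obstacle, is the smooth manifold with corners structure near a broken trajectory $\overline{\gamma} = (\gamma_1, \dots, \gamma_{\ell+1})$ that breaks at intermediate critical points $p_1, \dots, p_\ell \in \crit_+(w)$. Following Wehrheim, I would introduce gluing parameters $(\tau_1, \dots, \tau_\ell) \in [0,\infty)^\ell$, where $\tau_k = 0$ records the persistence of the $k$-th break. Using the Morse chart at each $p_k$ and the transversality of stable and unstable manifolds furnished by Proposition \ref{Smaleprop}, for each small $(\tau_1, \dots, \tau_\ell)$ one obtains a unique (possibly still broken) trajectory near $\overline{\gamma}$; incorporating the smooth factor coming from the unconstrained endpoint in $P_3$ or in $\{\w{1}{3}{3} > \rho/8\}$ then produces a homeomorphism from a corner neighborhood of $0$ in $[0,\infty)^\ell$ times a smooth parameter onto a $d_{\overline{\M}}$-neighborhood of $\overline{\gamma}$. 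The hard analytical content is verifying Wehrheim's gluing and smoothness estimates transfer: the Morse chart hypothesis and the Euclidean metric at critical points reduce the local model to Wehrheim's verbatim, while the linear-at-infinity structure controls the trajectory tails outside $K$. Compatibility of these charts under changing the distinguished break then gives a smooth atlas, and a point lies in the $\ell$-stratum exactly when $\ell$ of its gluing parameters vanish, which tautologically identifies the $\ell$-stratum with $\overline{\M}_{i,j;3}(\mathcal{U}_-, \mathcal{U}_+)_\ell$ as claimed.
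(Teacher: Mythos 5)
Your overall strategy is the same as the paper's: quote Wehrheim's Theorem 2.3, note that the delicate corner-chart construction is local to the critical points, where conditions (\ref{Morse})--(\ref{Euclidean}) of Definition \ref{wEMS} reproduce Wehrheim's local model exactly at the isolated positive-valued critical points, and observe that the zero-level Morse--Bott submanifold never enters because the positive gradient flow and the $\rho/8$ constraint keep all function values bounded away from $0$. The paper's proof is exactly this reduction.

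However, your handling of non-compactness contains a step that fails. You claim that linearity at infinity ``constrains the relevant trajectories to a compact enlargement of $K$,'' and you then work ``in a compact picture.'' This is false for two of the three spaces in the statement: in $\overline{\M}_{1,2;3}(p_1,P_3)$ and $\overline{\M}_{2,3;3}(p_2,P_3)$ the trajectories flow \emph{out of} the critical point and their finite ends are unconstrained, and the escape analysis in Lemma \ref{lem:cpct-set} shows precisely that a trajectory which leaves $K$ never returns -- so such finite ends range over an unbounded subset of $P_3$, and these spaces are contained in no compact set (the paper says this explicitly after Definition \ref{gentraj}). Your escape argument (``could not then return to a positive-valued critical point'') only applies to $\overline{\M}_{1,3;3}(\{\w{1}{3}{3}>\rho/8\},p_0)$, where the forward limit is the critical point $p_0$; likewise your Arzel\`a--Ascoli pinching of the $\w{i}{j}{3}$-values fails on the free-end segment, whose values are unbounded above. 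This is why the theorem asserts only \emph{local} compactness: the paper concedes that global compactness is lost (both because $P_3$ is non-compact and because $\{\w{1}{3}{3}>\rho/8\}$ is open), and it invokes the compact confinement of Lemmas \ref{lem:cpct-set} and \ref{awayfrom0} only for the \emph{trees}, i.e., after the evaluation constraint is imposed, deferring the compactness of $\bmodsp$ to Theorem \ref{cornertrees}. Your argument can be repaired by making it local rather than global: around a fixed broken trajectory, continuity of the endpoint evaluation confines nearby finite ends to a bounded region, after which Wehrheim's charts and compactness arguments apply verbatim; but as written, the ``reduced to a compact picture'' step is not available for the $(1,2)$ and $(2,3)$ spaces.
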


\begin{proof}
While our setup differs from that in \cite[Theorem 2.3]{wehrheim:morse}, we argue that constructions in \cite{wehrheim:morse} suffice to claim this result. In particular, these constructions give a maximal atlas of charts and associative gluing maps to define a manifold with corners structure for Morse-Smale pairs on a closed manifold. In neighborhoods not containing critical points, there is a natural smooth structure induced by the smoothness of the gradient flow. The careful work to define the corner structure occurs in neighborhood of the critical points. Thus, while the extended difference functions are Morse-Bott and defined on a noncompact manifold, Lemmas \ref{lem:cpct-set} and \ref{awayfrom0} show that neighborhoods of the trajectories that occur in trees occur in an open set contained in compact set. Hence, the charts in \cite{wehrheim:morse} suffice to give a manifold with corners structure on the relevant trajectory spaces. In contrast to \cite[Theorem 2.3]{wehrheim:morse}, we do not have compactness of the full trajectory spaces themselves. This is exactly because the spaces $P_3$ and $\{\w{1}{3}{3} > \dfrac{\rho}{8}\}$ are not closed. We do, however, retain local compactness. More details on compactness results for these spaces are given in the proof of Theorem \ref{cornertrees}. 
\end{proof}

\begin{defn}
For $p_1, p_2, p_0 \in \V{}$, let 
$$\overline{X} \coloneqq \overline{\M}_{1,2;3}(p_1,P_3) \times \overline{\M}_{2,3;3}(p_2,P_3) \times \overline{\M}_{1,3;3}\left(\left\lbrace\w{1}{3}{3} > \dfrac{\rho}{8}\right\rbrace, p_0\right).$$
\end{defn}

Applying Lemma \ref{cornerproducts} twice shows that the space
$\overline{X}$ is a manifold with corners whose $\ell$-stratum is $$\overline{X}_\ell = \bigsqcup_{i+ j+k=\ell} \overline{\M}_{1,2;3}(p_1,P_3)_i \times \overline{\M}_{2,3;3}(p_2,P_3)_j \times \overline{\M}_{1,3;3}\left(\left\lbrace\w{1}{3}{3} > \dfrac{\rho}{8}\right\rbrace, p_0\right)_k. $$ 
In particular, its 0-stratum is \begin{equation*}\label{mfldX}
X \coloneqq \overline{X}_0 =\Mfro{1}{2}{3}{p_1} \times \Mfro{2}{3}{3}{p_2} \times \M_{1,3;3}\left(\left\lbrace\w{1}{3}{3} > \dfrac{\rho}{8}\right\rbrace, p_0\right),
\end{equation*}
and consists of triples of unbroken half-infinite Morse trajectories.

\begin{defn}\label{dfn-ev}
To record the finite endpoint of broken and unbroken half-infinite Morse trajectories, we define \textit{generalized evaluation maps}  $$\ev{1}{3}{3}{-}: \overline{\M}_{1,3;3}\left(\left\lbrace\w{1}{3}{3} > \dfrac{\rho}{8}\right\rbrace, p_0\right) \to P_3, \;\;\; \ev{i}{j}{3}{+}: \overline{\M}_{i,j;3}(p_i, P_3) \to P_3 $$
 by $$\ev{1}{3}{3}{-}(\overline{\gamma}) = \ev{1}{3}{3}{-}(\{\gamma_1, \dots ,\gamma_\ell\}) \coloneqq \gamma_1(0)$$ and $$\ev{i}{j}{3}{+}(\overline{\gamma}) = \ev{i}{j}{3}{+}(\{\gamma_1, \dots ,\gamma_\ell\}) \coloneqq \gamma_\ell(0).$$

The \textit{triple generalized evaluation map} $$\operatorname{Ev}: \overline{X} \to P_3 \times P_3 \times P_3, $$ then, is the product of these maps:
$$\text{Ev}\left(\overline{\gamma}_1,\overline{\gamma}_2,\overline{\gamma}_3\right) \coloneqq \left(\ev{1}{2}{3}{+}(\overline{\gamma}_1), \ev{2}{3}{3}{+}(\overline{\gamma}_2), \ev{1}{3}{3}{-}(\overline{\gamma}_3) \right)  $$

\end{defn}

\begin{rem}\label{evcont}
Wehrheim \cite[Lemma 3.3]{wehrheim:morse} proved that the extended evaluation maps in Definition \ref{dfn-ev} are continuous with respect to the Hausdorff metric defined in \ref{dfn-metric}. As shown in \cite[Remark 5.5]{wehrheim:morse}, the evaluation maps are smooth. Thus, $\text{Ev}$ is continuous and smooth on $\overline{X}$.
\end{rem}


\begin{defn}
We denote the \dfn{diagonal of $(P_3)^3$} by $\Delta$, i.e, 
$\Delta = \{(y,y,y) \mid y \in P_3\}.$ Then, given $p_1,p_2,p_0 \in \V{}$, a \textit{generalized flow tree} is a triple $\left(\overline{\gamma}_1,\overline{\gamma}_2,\overline{\gamma}_3\right) \in \text{Ev}^{-1}(\Delta),$ and the \textit{moduli space of generalized flow trees} is $\overline{\M}(p_1,p_2;p_0) = \text{Ev}^{-1}(\Delta)$. The \textit{moduli space of unbroken flow trees} is denoted $\M(p_1,p_2;p_0)$ and equals $\left(\text{Ev}|_X\right)^{-1}(\Delta),$ the preimage of the diagonal under the restriction of $E$ to the 0-stratum of $\overline{X}$.
\end{defn}


Although the gradient trajectories are in the non-compact space $P_3$, the following shows that all trees will have their images in a compact subset of $P_3$.

\begin{lem} \label{lem:cpct-set}
Given a linear-at-infinity generating family $F:M\times \R^N \rightarrow \R$ with non-linear support $K \subseteq P_{3}$ as in Definition \ref{defK}, for all $p_1, p_2, p_0$ and all
$\Gamma = (\overline{\gamma}_1, \overline{\gamma}_2, \overline{\gamma}_3) \in \overline{\M}(p_1,p_2; p_0)$,  $\operatorname{Im}(\Gamma) \subseteq K$.
\end{lem}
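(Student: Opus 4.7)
My plan is to establish $\operatorname{Im}(\Gamma) \subseteq K$ in two stages: first, show that the common finite endpoint $y$ of the three broken trajectories lies in $K$; second, show that each unbroken sub-trajectory of each $\overline{\gamma}_i$ --- whose endpoints will then be known to lie in $K$ (critical points in $K$ by Remark \ref{critscmpct}, or the meeting point $y$) --- stays in $K$. The argument proceeds coordinate-by-coordinate on $P_3 = M \times \R^N \times \R^N \times \R^N$.

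For the $x$-coordinate, the key observation is that the linear-at-infinity condition on $F$ forces each $\w{i}{j}{3}$ to be independent of $x$ outside $K_M$ (since $F(x,e) = A(e)$ there), so the $x$-component of $\nabla \w{i}{j}{3}$ vanishes on $(M \setminus K_M) \times \R^{3N}$. Consequently, any sub-trajectory whose $x$-coordinate leaves $K_M$ would be stationary in $x$ on a maximal open time-interval around the exit; by continuity of $\gamma$ and closedness of $K_M$, this interval must extend to an asymptotic endpoint. But every asymptotic endpoint is a critical point of $\w{i}{j}{3}$, hence by Remark \ref{critscmpct} lies in $K \subseteq K_M \times \R^{3N}$, a contradiction. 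Thus $x$ stays in $K_M$ along every sub-trajectory, and in particular $x^y \in K_M$.

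For the fiber coordinates, I invoke the splitting of the gradient from Lemma \ref{gradsplit}: each $\gamma_i$ factors into a $w$-flow in its $(x,e_i,e_j)$-plane and a $\pm\nabla Q$-flow in its remaining ``stabilization'' coordinate $e_k$. Each $e_k$-projection is asymptotic to $0_Q$ at the appropriate end, since by Lemma \ref{iotabij} every critical point of $\w{i}{j}{3}$ has stabilization coordinate $0_Q$. Outside $K_E$, $\nabla Q(e) = 2e$ gives exponentially outward radial dynamics, while outside $K_E$ the $e_i$- and $e_j$-components of $\nabla w$ reduce to the constant vectors $\pm \partial A/\partial e$. For each fiber coordinate $e_m$ at $y$, I combine the two trajectories whose splittings both involve $e_m$ --- one via a $\pm\nabla Q$-projection (asymptotic to $0_Q$), the other via a $w$-projection (asymptotic to a critical point whose $e_m$-component is in $K_E$). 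For example, were $e_3^y \notin K_E$, the exponentially-outward behavior forced by the asymptotic condition of $\gamma_1$'s $\nabla Q$-projection and the constant linear drift of $\gamma_3$'s $w$-projection outside $K_E$ in $e_3$ would be incompatible with the convergence of $\gamma_3$ to $p_0$. Analogous arguments using the pairs $(\gamma_2, \gamma_1)$ for $e_1$ and $(\gamma_3, \gamma_1)$ or $(\gamma_3, \gamma_2)$ for $e_2$ give $e_1^y, e_2^y \in K_E$, so $y \in K$. Finally, applying the same coordinate-by-coordinate dynamical analysis --- now with both endpoints of each unbroken sub-trajectory already in $K$ --- propagates containment to yield $\operatorname{Im}(\Gamma) \subseteq K$.

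The main obstacle is the fiber-coordinate step. Unlike the $x$-direction, where the gradient vanishes outside $K_M$ and a single-trajectory ``stuck outside'' argument suffices, the fiber directions have non-vanishing gradients outside $K_E$ --- constant linear drift in the $w$-directions and exponential radial repulsion in the $Q$-direction. Ruling out exits from $K_E$ therefore requires combining information from two trajectories at once, leveraging the splitting of Lemma \ref{gradsplit} together with the specific asymptotic behavior at the critical points of $\w{i}{j}{3}$.
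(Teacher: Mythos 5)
Your skeleton matches the paper's: handle the $x$-coordinate by the vanishing of the $\partial_x$-component of $\nabla\w{i}{j}{3}$ outside $K_M$, pin the meeting point $y$ inside $K$, then propagate containment along each edge using the explicit form of the gradients outside $K$. The $x$-argument and the reduction to sub-trajectories with asymptotic ends at critical points (which lie in $K$ by Remark \ref{critscmpct}) are fine. The genuine gap is in the step you correctly identify as the crux: for a fiber coordinate $e_m$, the pair of trajectories you propose cannot yield a contradiction. The $\pm Q$-projection carries no information about $e_m^y$ at all: $0_Q$ is the unique critical point of $Q$ and $Q$ is proper, so under the positive gradient flow of $+Q$ \emph{every} point of $\R^N$ lies on a trajectory emanating backward from $0_Q$ (and for $-Q$ every point flows forward into $0_Q$). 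The ``exponentially outward'' behavior you cite is exactly what occurs for legitimate trees whose vertex has $e_m$-coordinate in $K_E$ but different from $0_Q$, so it is not incompatible with anything. Nor does the single $w$-projection you pair it with give a contradiction by itself: outside $K_E$ its $e_m$-velocity is the constant vector $\pm c$, and nothing prevents that drift from carrying the coordinate back into $K_E$ in finite time, after which convergence to $p_0$ (or emergence from $p_1$ or $p_2$) is unobstructed. In your example, $e_3^y \notin K_E$ is consistent with every fact you list, so the claimed incompatibility does not exist.

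The contradiction has to come from the \emph{two} trajectories that see $e_m$ through $w$, which is what the paper does: $(\gamma_1,\gamma_3)$ for $e_1$ (both drift by $+c$ outside $K_E$, one running into $y$ and one out of $y$), $(\gamma_1,\gamma_2)$ for $e_2$ (both running into $y$, with drifts $-c$ and $+c$), and $(\gamma_2,\gamma_3)$ for $e_3$ (both drift by $-c$, one in and one out). In each case both far ends are asymptotic to critical points whose $e_m$-coordinate lies in $K_E$, so each excursion outside $K_E$ is a straight segment with velocity $\pm c$ that must connect $e_m^y$ to $K_E$; the two conditions together with the convexity of $K_E$ (built into the definition of $K$) force $e_m^y \in K_E$, the desired contradiction. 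Note also that your stage-two propagation needs the same convexity observation (an excursion of a $w$-coordinate outside $K_E$ would be a straight segment with both endpoints on $\partial K_E$), not merely ``both endpoints of the sub-trajectory lie in $K$''; that part is easily repaired, but the vertex step as written is not.
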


\begin{proof}
As in Remark \ref{critscmpct}, all critical points of the extended difference functions are in $K$. To simplify the proof, then, we work with unbroken trajectories. To show that the image of every $\Gamma \in \M(p_1,p_2;p_0)$ is contained in $K$, we first show that that every trajectory of  $\nabla \w{i}{j}{3}$ that leaves $K$ cannot reenter $K$.  This implies that 
 every edge $E$ in a tree, $\operatorname{image} \Gamma|_{E} $ can intersect $\partial K$ at most once.  To show that these edges in fact never intersect
 $\partial K$, we show that for the vertex  $v$ in the interior of the tree, $\Gamma(v) \in K$.  
 
 For these arguments, it will useful to first analyze some properties of $\nabla \w{i}{j}{3}$ outside a compact set.
Since $F$ is linear-at-infinity and our metrics are chosen to be standard outside $K$, we know that for $(x,e_1,e_2, e_3) \notin K$,
\begin{align*}
 \nabla \w{i}{j}{3}(x,e_1,e_2,e_3) = &\left(\dfrac{\partial F}{\partial x}(x,e_i) - \dfrac{\partial F}{\partial x}(x,e_j)\right) \dfrac{\partial}{\partial x}  \\
 &+ 
 \left(\dfrac{\partial F}{\partial e_i} (x,e_i) \right) \dfrac{\partial}{\partial e_i} 
 - \left( \dfrac{\partial F}{\partial e_j} (x,e_j) \right) \dfrac{\partial}{\partial e_j}  \\
 & \pm 2e_\ell \dfrac{\partial}{\partial e_\ell}.  \end{align*}
 where $\ell \neq i,j$ and the $\dfrac{\partial}{\partial e_\ell}$ sign is $-$ if $\ell = 2$ and $+$ else.
 
 More specifically, suppose that outside $K_M \times K_E$, $F(x, e)$ is the nonzero linear function $A(e)$ and 
$\dfrac{\partial}{\partial e} A(e) = c \in \R^N-\{0\}$.  Then we have that $\forall i,j$, 
\begin{enumerate}
\item \label{x-dir} if $x \notin K_M$, the $\dfrac{\partial}{\partial x}$ component of $\nabla \w{i}{j}{3}(x,e_1, e_2, e_3)$ equals $0$; 
\item \label{ei-dir}  if $e_i \notin K_E$, the $\dfrac{\partial}{\partial e_i}$ component of $\nabla \w{i}{j}{3}(x,e_1, e_2, e_3)$ equals $c$ ; 
\item \label{ej-dir}  if $e_j \notin K_E$, the $\dfrac{\partial}{\partial e_j}$ component of $\nabla \w{i}{j}{3}(x,e_1, e_2, e_3)$ equals $-c$;
\item \label{other-dir}  for $e_\ell$, $\ell \neq i, j$, the $\dfrac{\partial}{\partial e_\ell}$ component of $\nabla \w{i}{j}{3}(x,e_1, e_2, e_3)$ is $ 2e_{\ell}$,
when $\ell = 1$ or $\ell = 3$ and is $-2e_{\ell}$ when $\ell =2$. 
\end{enumerate}
First, suppose $\gamma$ is a trajectory of $ \nabla \w{i}{j}{3}$ and there exists a $t_0 < t_1 $ so that $\gamma(t_0) \in K, \gamma(t_1) \notin K$.  The following
argument then shows that for all $t > t_1$, $\gamma(t) \notin K$.  Since $\gamma(t_1) \notin K$, $\gamma(t_1) = (x, e_1,e_2,e_3)$ where
$x \notin K_M$ or $e_i \notin K_E$, for some $i$.    
From the form of $\nabla \w{i}{j}{k}$ outside $K$, it is easy to see that for all $t > t_1$, $\gamma(t_1) \notin K$.  For example, if $\gamma(t_1) = (x, e_1, e_2, e_3)$, where
$e_i \notin K_E$, then since  the $\dfrac{\partial}{\partial e_i}$ component of $\nabla \w{i}{j}{k}(x,e_1, \dots, e_{k+1})$ is constant or linear (and $0 \in K$), it follows
that for all $t > t_1$, the $i^{th}$ component of $\gamma(t)$ will not lie in $K_E$. 

Let $y  \in M \times \R^{3N}$ denote the intersection point of gradient trajectories. Suppose $y=(x^y, e_1^y, e_2^y , e_3^y) \notin K$. From (\ref{x-dir}),  we see that  $y \notin K$ can only follow from $e_i^y \notin K_{E}$ for some $1 \leq i \leq 3$. We complete the argument  by finding contradictions to $e_i^y \notin K_{E}$ by cases depending on $i$.

Suppose $e_1^y \notin K_{E}$. Then by (\ref{ei-dir}), we see that the $\dfrac{\partial}{\partial e_1}$ components of $\nabla \w{1}{2}{3}(x,e_1, e_2, e_3)$ and $\nabla \w{1}{3}{3}(x,e_1, e_2, e_3)$ both equal $c$, but the first flows from $K$ to $y$ and the other flows from $y$ to $K$, giving a contradiction. 

A similar contradiction is reached if $e_2^y \notin K_{E}$. In particular, the trajectories along $\nabla \w{1}{2}{3}$ and $\nabla \w{2}{3}{3}$ both flow to $y$, but by (\ref{ei-dir}) and (\ref{ej-dir}), we see the $\dfrac{\partial}{\partial e_2}$ components of the trajectories outside $K$ are constant with opposite signs.

Lastly, if $e_3^y \notin K_{E}$, we obtain a similar contradiction as in the case $i=1$ using (\ref{ej-dir}) and the fact that $\nabla \w{2}{3}{3}$ flows from $K$ to $y$ while $\nabla \w{1}{3}{3}$ flows away from $y$ back to $K$. Thus we must have $y \in K$.


Hence $\Gamma$ is contained in $K$, as desired. 
\end{proof}

Theorem \ref{KW}, Remark \ref{evcont}, and Lemma \ref{lem:cpct-set} show that $\overline{\M}(p_1,p_2;p_0)$ is compact: as $\Delta$ is closed in $P_3 \times P_3 \times P_3$, $\overline{\M}(p_1,p_2;p_0)$ is a closed subset of a locally compact space, $\overline{X}$. This, with the fact that $\overline{\M}(p_1,p_2;p_0)$ is contained in a compact subset of $P_3$, gives compactness. It remains to show that we may perturb $\overline{\M}(p_1,p_2;p_0)$ slightly in this neighborhood to obtain a compact smooth manifold with corners.

To do so, we define a perturbation ball $S$ and maps $E_{i,j;3}$ that perturb the evaluation at endpoints map by vectors in $S$. The perturbation ball $S$ and the subsequent maps are defined in slightly different ways depending on if the manifold $M$ is a Euclidean space or a compact manifold.

\begin{defn}\label{pert}
If $M=\R^n$, define the \dfn{perturbation ball} $S \subset P_3$ to be an open $\epsilon$-ball centered at 0 in $P_3 = \R^{n+3N}$. We will denote such a ball as $B^{n+3N}(\epsilon)$ or just $B(\epsilon)$ if the dimension is clear.

If $M$ is a compact manifold, then let $M \subset \R^m$ for some $m \in \mathbb{N}$. Every $B^m(\epsilon)$ defines a space $M^{\epsilon} \subset \R^m$, the open set of points in $\R^m$ of distance less than $\epsilon$ to $M$. By the $\epsilon$-Neighborhood Theorem (see, for example \cite{g-p}), if $\epsilon$ is small enough, there is a well defined submersion $\pi_M: M^{\epsilon} \to M$ that takes a point in $M^{\epsilon}$ to the unique closest point in M and is the identity when restricted to $M$. We can extended this map to get a submersion $\pi: M^{\epsilon} \times \R^{3N} \to P_3$ defined by $$\pi(x,e_1,e_2,e_3) = (\pi_M(x),e_1,e_2,e_3). $$

For $M \subset \R^m$ compact, the \dfn{perturbation ball} is $$S \coloneqq B^{m+3N}(\epsilon) \subset B^m(\epsilon) \times B^{3N}(\epsilon) \subset \R^m \times \R^{3N}.$$
\end{defn}

\begin{rem}\label{pertdelta}
For $\delta$ as in Remark \ref{shrinkfiber}, we choose the size of the perturbation ball $S$ so that for all $s \in S$, $|s| < \delta$. This choice will be used to show that an analogue of Lemma \ref{awayfrom0} still holds after perturbing $\overline{\M}(p_1,p_2;p_0)$.
\end{rem}

\begin{defn}\label{Eijbroken}
For $M=\R^n$, define the \dfn{generalized perturbed evaluation maps $E_{i,j;3}$} as follows:
\begin{align*}
E_{1,2;3}: \Mfrob{1}{2}{3}{p_1} \times S &\to P_3 \\
(\overline{\gamma}, s) &\mapsto \ev{1}{2}{3}{+}(\overline{\gamma}) + s = \gamma_\ell(0) +s, \\
E_{2,3;3}: \Mfrob{2}{3}{3}{p_2} \times S &\to P_3 \\
(\overline{\gamma}, s) &\mapsto \ev{2}{3}{3}{+}(\overline{\gamma}) + s = \gamma_\ell(0) +s, \\
E_{1,3;3}: \Mtob{1}{3}{3}{p_0} \times S &\to P_3 \\
(\overline{\gamma}, s) &\mapsto \ev{1}{3}{3}{-}(\overline{\gamma}) + s = \gamma_1(0) +s.\\
\end{align*}

If $M$ is a closed manifold, we define the \dfn{generalized perturbed evaluation maps $E_{i,j;3}$} using the map $\pi: M^{\epsilon} \times \R^{3N} \to P_3$ defined in Definition \ref{pert}:
\begin{align*}
E_{1,2;3}: \Mfrob{1}{2}{3}{p_1} \times S &\to P_3 \\
(\overline{\gamma}, s) &\mapsto \pi \left( \ev{1}{2}{3}{+}(\overline{\gamma}) + s \right)= \pi \left(\gamma_\ell(0) +s \right), \\
E_{2,3;3}: \Mfrob{2}{3}{3}{p_2} \times S &\to P_3 \\
(\overline{\gamma}, s) &\mapsto \pi \left( \ev{2}{3}{3}{+}(\overline{\gamma}) + s \right) = \pi \left( \gamma_\ell(0) +s \right), \\
E_{1,3;3}: \Mtob{1}{3}{3}{p_0} \times S &\to P_3 \\
(\overline{\gamma}, s) &\mapsto \pi \left( \ev{1}{3}{3}{-}(\overline{\gamma}) + s \right) = \pi \left( \gamma_1(0) +s\right). \\
\end{align*}

In either case, define the following \dfn{triple perturbed evaluation map}:
\begin{align*}
E: \overline{X} \times (S \times S \times S) &\to P_3 \times P_3 \times P_3 \\
\left( (\gamma_1, \gamma_2, \gamma_3) , (s_1,s_2,s_3) \right) &\mapsto \left( E_{1,2;3}(\gamma_1,s_1), E_{2,3;3}(\gamma_2,s_2), E_{1,3;3}(\gamma_3,s_3) \right).
\end{align*}
\end{defn}

%
%

\begin{rem}
The $E_{i,j;3}$ maps are well-defined: this is clear when $M= \R^n$, and for compact $M \subset \R^m$, the evaluation maps have outputs in $P_3 =M \times \R^{3N} \subset M^{\epsilon} \times \R^{3N} \subset \R^{m+3N}$. Adding an element $s \in S$ to this output will give a point within distance $\epsilon$ of the endpoint. This is a valid input for the map $\pi$, which we use to get a corresponding point $P_3$. 
\end{rem}

%
For a fixed $s = (s_1,s_2,s_3) \in S^3$, we denote $E_s^{-1}(\Delta^3)$ by $\modsp$. We will show in Theorem \ref{cornertrees} that for almost every $s \in S^3$, $\modsp$ is a smooth manifold with corners. First, we must argue that analogues of Lemma \ref{awayfrom0pre} and Lemma \ref{lem:cpct-set} still hold after perturbation by $s$. Clearly, the image of a perturbed tree will still be a subset of a compact set of $P_3$, so it only remains to show that the choice to form trees with the edge following $\nabla \w{1}{3}{3}$ living in $\overline{\M}_{1,3;3}\left(\left\lbrace\w{1}{3}{3} > \dfrac{\rho}{8}\right\rbrace, p_0\right)$ was not restrictive.

\begin{lem}\label{awayfrom0}
If $p_1, p_2 \in \V{}$ and $$\Gamma = (\gamma_1, \gamma_2, \gamma_3) \in  \M_{1,2;3}(p_1, P_3) \times \M_{2,3;3}(p_2, P_3) \times \M_{1,3;3}(P_3, p_0)$$ 
with $E_{1,2;3}(\gamma_1,s_1)= E_{2,3;3}(\gamma_2,s_2) = E_{1,3;3}(\gamma_3,s_3),$ then 
$ \gamma_3(0) > \dfrac{ \rho}{4} >0, $
where $\rho$ is the least positive critical value of $w$. 
\end{lem}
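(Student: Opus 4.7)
The plan is to mimic the proof of Lemma \ref{awayfrom0pre}, replacing the exact identification $\gamma_1(0) = \gamma_2(0) = \gamma_3(0)$ with an approximate one after perturbation. Perturbation sizes are constrained by $|s_i| < \delta$ from Remark \ref{pertdelta}, where $\delta$ was chosen in Remark \ref{shrinkfiber}(2) so that on the compact set $K$ each extended difference function varies by at most $\rho/4$ across a displacement of size $\delta$. Each inequality in the unperturbed proof will absorb a loss of at most $\rho/4$; starting from $\w{1}{2}{3}(p_1), \w{2}{3}{3}(p_2) \geq \rho$, there will still be enough slack at the end to conclude $\w{1}{3}{3}(\gamma_3(0)) > \rho/4$.

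Concretely, I would set $y_i = \gamma_i(0)$ and let $y$ denote the common perturbed value $E_{1,2;3}(\gamma_1,s_1) = E_{2,3;3}(\gamma_2,s_2) = E_{1,3;3}(\gamma_3,s_3)$. For $M = \R^n$, $y = y_i + s_i$, so $|y - y_i| = |s_i| < \delta$ is automatic. For $M$ a closed manifold embedded in $\R^m$, instead $y = \pi(y_i + s_i)$; since $\pi$ is the nearest-point projection to $P_3$ and $y_i \in P_3$, one has $|y - (y_i+s_i)| \leq |s_i|$, and the triangle inequality controls $|y - y_i|$ in terms of $|s_i|$ alone. In either case, uniform continuity converts this into $|\w{i}{j}{3}(y) - \w{i}{j}{3}(y_i)| < \rho/4$ for each relevant pair $(i,j)$.

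The chain of estimates from Lemma \ref{awayfrom0pre} then goes through almost verbatim, with a $\rho/4$ loss at each comparison. Positive gradient flow along $\gamma_1$ and $\gamma_2$ gives $\w{1}{2}{3}(y_1), \w{2}{3}{3}(y_2) \geq \rho$, which transfer to $\w{1}{2}{3}(y), \w{2}{3}{3}(y) > 3\rho/4$. Combining this with the algebraic identity from Definition \ref{extdiff} relating $\w{1}{3}{3}$ to the other two extended difference functions plus a combination of $Q$-terms, and the fiber-shrinking bound of Remark \ref{shrinkfiber}(1) on that combination, yields $\w{1}{3}{3}(y) > 3\rho/4 + 3\rho/4 - \rho = \rho/2$. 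One final application of the continuity estimate between $y$ and $y_3 = \gamma_3(0)$ produces $\w{1}{3}{3}(y_3) > \rho/2 - \rho/4 = \rho/4$, as required.

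The only point requiring genuine care is the compact $M$ case, where one must verify that the nearest-point projection $\pi$ in the definition of $E_{i,j;3}$ does not displace the finite endpoint by more than $\delta$ from $y_i$. This is precisely why $S$ was defined with radius strictly less than $\delta$ rather than equal to $\delta$ --- the resulting slack absorbs the constant factor (at most $2$) introduced by composing translation by $s_i$ with $\pi$, so the uniform continuity step goes through uniformly in $i$.
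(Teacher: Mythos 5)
Your argument is essentially the paper's own proof of Lemma \ref{awayfrom0}: the same chain of estimates with a loss of $\rho/4$ at each of the three transfers between $y$ and the finite endpoints $\gamma_k(0)$ (via the uniform-continuity constant $\delta$ of Remark \ref{shrinkfiber}), the same identity $\w{1}{3}{3}(y) = \w{1}{2}{3}(y) + \w{2}{3}{3}(y) - \bigl(Q(e_3^y)+Q(e_1^y)-Q(e_2^y)\bigr)$, the same $\rho$-bound on the $Q$-terms from the fiber-shrinking choice, and the same final count $2\rho - \rho - 3\rho/4 = \rho/4$.

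The one place you go beyond the paper --- the compact-$M$ case --- contains a small error in its resolution. You correctly observe that with $y = \pi(\gamma_k(0)+s_k)$ the triangle inequality only gives $|y - \gamma_k(0)| \leq 2|s_k|$, but your claim that this factor of $2$ is ``absorbed'' because $S$ has radius strictly less than $\delta$ does not work: $|s_k| < \delta$ gives $2|s_k| < 2\delta$, not $<\delta$, and strictness provides no fixed slack. The paper simply writes $E_{i,j;3}$ as $\pi(\gamma_k(0)+s_k)$ and applies the $\delta$-estimate without comment; the honest fix for the point you raise is either to take the radius of $S$ at most $\delta/2$ (Remark \ref{pertdelta} only requires $|s|<\delta$, so this is harmless), or to note that for $\epsilon$ small the nearest-point projection $\pi_M$ on $M^{\epsilon}$ has Lipschitz constant arbitrarily close to $1$, so the displacement is controlled by (a constant close to) $|s_k|$. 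With either repair your proof is complete and coincides with the paper's.
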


\begin{proof}
Consider
\begin{align*}
y =  E_{1,2;3}(\gamma_1,s_1)= E_{2,3;3}(\gamma_2,s_2) = E_{1,3;3}(\gamma_3,s_3) \\ =(x(y),e_1(y),e_2(y),e_3(y)) \in P_3.
\end{align*}
 While the $E_{i,j;3}$ maps were defined in slightly different ways dependent on if the underlying manifold $M$ was Euclidean or closed (see Definition \ref{Eijbroken}), we may express them as $\pi(\gamma_k +s_k)$ for $k=1,2,3$, where $\pi$ is the identity or the submersion described in the definition. 

Since the trees in $\modsp$ are defined using the positive gradient flow of the extended difference functions, we have that $\w{1}{2}{3}(\gamma_1(0)) \geq \w{1}{2}{3}(p_1)$ and $\w{2}{3}{3}(\gamma_2(0)) \geq \w{2}{3}{3}(p_2)$.

By construction of the extended difference functions,
\begin{equation}\label{jumpeq}
\w{1}{3}{3}(y) = \w{1}{2}{3}(y) + \w{2}{3}{3}(y) - \left(Q(e_3^y) + Q(e_1^y) - Q(e_2^y)\right).
\end{equation}

We now make use of a couple of the choices we have built into our constructions of $\modsp$; see Remark \ref{shrinkfiber}. Since the perturbation terms $s_i \in S$, we have ensured that $|s_i| < \delta $ so that, using the uniform continuity of $\w{i}{j}{3}$ on $K$, we have that
\begin{equation} 
 |\w{i}{j}{3} (y) - \w{i}{j}{3} \left( \gamma_k(0)\right)| < \dfrac{\rho}{4}. 
 \end{equation}
From this and ($\ref{jumpeq}$) we see that

\begin{align*}
  &\w{1}{3}{3}(\gamma_3(0)) \\
&> \w{1}{3}{3}(y) - \dfrac{\rho}{4}\\
&= \; \w{1}{2}{3}(y) + \w{2}{3}{3}(y) - \left(Q(e_3^y) + Q(e_1^y) - Q(e_2^y)\right) - \dfrac{\rho}{4} \\
&> \w{1}{2}{3}(\gamma_1(0)) - \dfrac{\rho}{4} +  \w{2}{3}{3}(\gamma_2(0)) - \dfrac{\rho}{4} - \left(Q(e_3^y) + Q(e_1^y) - Q(e_2^y)\right) - \dfrac{\rho}{4} \\
& \geq \w{1}{2}{3}(p_1)+ \w{2}{3}{3}(p_2) - \left(Q(e_3^y) + Q(e_1^y) - Q(e_2^y)\right) -\dfrac{3\rho}{4}\\
&> \w{1}{2}{3}(p_1)+ \w{2}{3}{3}(p_2) - \rho -\dfrac{3\rho}{4} \\
&> 2\rho - \rho -\dfrac{3\rho}{4} =\dfrac{ \rho}{4} > 0. \qedhere
\end{align*}
\end{proof}

\begin{thm}\label{cornertrees}
For almost every $s=(s_1,s_2,s_3) \in S^3 = S \times S \times S$, $\overline{E}_s^{-1}(\Delta^3) = \bmodsp$ is a compact manifold with corners of dimension $|p_0|-|p_1|-|p_2|$ with $i-stratum$ $\bmodsp_i = \overline{X}_i \cap \overline{E}_s^{-1}(\Delta^3)$ given by trees with a total of $i$ breaks on the tree edges. 
\end{thm}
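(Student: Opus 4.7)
The plan is to apply the transversality theorem for manifolds with corners (Theorem \ref{transcorners}) followed by the preimage theorem (Theorem \ref{preimage}) to the triple perturbed evaluation map $E \colon \overline{X}\times S^3 \to (P_3)^3$ together with the closed diagonal $\Delta^3 \subset (P_3)^3$. Compactness of $\bmodsp$ will be handled separately using the geometric containment results of Lemma \ref{lem:cpct-set} and Lemma \ref{awayfrom0}.

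The first step is to verify that $E$ and each restriction $\partial_i E$ to a corner stratum $\overline{X}_i \times S^3$ is transverse to $\Delta^3$. I would in fact prove the stronger statement that $E$ is a submersion at every point, because independent perturbation by the three $S$-factors adds independent vectors to each of the three evaluation points: in the Euclidean case this is direct, and in the closed-$M$ case it follows from the fact that the map $\pi$ of Definition \ref{pert} is itself a submersion. Because $dE$ is surjective onto $T(P_3)^3$ at every point, $E$ is transverse to any submanifold of $(P_3)^3$, including $\Delta^3$, and the same argument restricted to any corner stratum gives transversality there (using smoothness of the factors from Theorem \ref{KW} and smoothness of Ev from Remark \ref{evcont}).

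Theorem \ref{transcorners} then produces a full-measure set of $s \in S^3$ for which $E_s$ and each $\partial_i E_s$ are transverse to $\Delta^3$. For such $s$, Theorem \ref{preimage} gives that $\bmodsp = E_s^{-1}(\Delta^3)$ is a smooth manifold with corners whose $i$-stratum equals $\overline{X}_i \cap E_s^{-1}(\Delta^3)$, with codimension in $\overline{X}$ equal to the codimension of $\Delta^3$ in $(P_3)^3$, namely $2(n+3N)$. Using the identifications in Remark \ref{smothstr} together with the index relation from Lemma \ref{iotabij}, one computes
\[
\dim \overline{X} = (n+2N-|p_1|) + (n+2N-|p_2|) + (|p_0|+2N) = 2n+6N - |p_1| - |p_2| + |p_0|,
\]
yielding the claimed dimension $|p_0|-|p_1|-|p_2|$. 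The identification of the $i$-stratum with trees having $i$ total edge breaks follows from the stratum description of each factor provided by Theorem \ref{KW}, together with Lemma \ref{cornerproducts}, which stratifies the product by total corner degree.

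The main obstacle is compactness, since the half-infinite generalized trajectory spaces are only locally compact. To deal with it I would combine two ingredients. First, the argument of Lemma \ref{lem:cpct-set} must be adapted to the perturbed setting: using the choice $|s_i|<\delta$ from Remark \ref{pertdelta}, the three approximate endpoints lie within $\delta$ of a common point, and the same case-by-case analysis of the components of $\nabla \w{i}{j}{3}$ outside $K$ forces each trajectory to lie inside a slightly enlarged compact neighborhood of $K$. Second, Lemma \ref{awayfrom0} guarantees that the finite endpoint of the $\w{1}{3}{3}$-edge stays in the open set $\{\w{1}{3}{3} > \rho/4\}$, safely inside the allowed domain $\{\w{1}{3}{3} > \rho/8\}$, so no sequence in $\bmodsp$ can escape to the boundary of that open set. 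Combining these, $\bmodsp$ is contained in a compact subset of $\overline{X}$; since it is also closed in $\overline{X}$ as the preimage of the closed set $\Delta^3$ under the continuous map $E_s$, compactness follows.
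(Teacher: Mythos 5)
Your proposal follows essentially the same route as the paper's proof: transversality obtained by observing that the perturbed evaluation maps are submersions in the $S$-factors (composed with $\pi$ in the closed-$M$ case), then Theorems \ref{transcorners} and \ref{preimage} to cut out the manifold with corners with the stated strata, the identical dimension count, and compactness via the containments of Lemma \ref{lem:cpct-set} and Lemma \ref{awayfrom0}. The one place you are lighter than the paper is the last step: the assertion that $\bmodsp$ lies in a compact subset of $\overline{X}$ is itself the nontrivial point, which the paper settles by the reparametrization/equicontinuity and Arzel\`{a}--Ascoli argument of \cite{wehrheim:morse} and \cite{bhsmale}, so that sequential-compactness argument should be cited or reproduced rather than asserted.
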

\begin{proof}
We transversely cut out a smooth manifold with corners from $\overline{X}$, making use of extensions of Transversality and Preimage Theorems for manifolds with corners; see Theorems \ref{preimage} and \ref{transcorners}.

We use Theorem \ref{transcorners} to show that for almost every $s=(s_1,s_2,s_3) \in S \times S \times S$, $\partial_\ell \overline{E}_s = \overline{E}_s |_{\overline{X}_\ell}$ is transversal to the diagonal $\Delta^3 \subset P_3 \times P_3 \times P_3$. This will imply, by Theorem \ref{preimage}, $\overline{E}_s^{-1}(\Delta^3)$ is a smooth submanifold with corners of $\overline{X}$ whose $\ell$-stratum $\overline{E}_s^{-1}(\Delta^3)_\ell$ is $\overline{X}_\ell \cap \overline{E}_s^{-1}(\Delta^3)$.

To use Theorem \ref{transcorners}, we need to show that $\partial_i \overline{E}$ is transversal to $\Delta^3$ for all strata of $\overline{X}$. Fix a trajectory sequence $\overline{\gamma}$ in $\Mfrob{1}{2}{3}{p_1}, \Mfrob{2}{3}{3}{p_2},$ or $\overline{\M}_{1,3;3}(\{\w{1}{3}{3} > \dfrac{\rho}{8}\}, p_0)$. 

For $M=\R^n$, the map $\overline{E}_{i,j;3}$ restricted to $\overline{\gamma}$ is the translation $s \mapsto x +s$ where $x = \ev{i}{j}{3}{\pm}(\overline{\gamma})$ and so is a submersion, and since $\overline{\gamma}$ was arbitrary, $E_{i,j;3}$ restricted to any strata (which is exactly the map $\partial^i \overline{E}_{i,j;3}$) of $\Mfrob{i}{j}{3}{p_i}$ or $\overline{\M}_{1,3;3}(\{\w{1}{3}{3} > \dfrac{\rho}{8}\}, p_0)$ is a submersion. 

The case where $M$ is closed is similar. For fixed $\overline{\gamma}$ as above, the map $\overline{E}_{i,j;3}$ sends $s$ to $ \pi(x +s)$ and is a composition of a translation with $\pi$, which was chosen through the $\epsilon$-Neighborhood Theorem to be a submersion (see Definition \ref{pert}). Since a restriction to one trajectory sequence is a submersion, a restriction to any stratum will be as well.

Thus, fixing a triple of trajectories $\hat{\gamma} \coloneqq \left( \overline{\gamma}_1, \overline{\gamma}_2, \overline{\gamma}_3 \right) \in \overline{X}$, note that, no matter which stratum this triple lives in, the map $E_{\hat{\gamma}}: S \times S \times S \to P_3 \times P_3 \times P_3$ is a product of submersions of the ball $S$. Thus, any restriction of $E$ to any strata of $\overline{X}$ is transversal to any submanifold of $(P_3)^3$, which shows that $\partial_i \overline{E} \pitchfork \Delta^3$.

Thus, for almost every $s \in S \times S \times S$, $\bmodsp$ will be a smooth manifold with corners whose codimension in $\overline{X}$ equals the codimension of $\Delta^3$ in $(P_3)^3$. 
From this, Remark \ref{smothstr}, and Equation \ref{eqn:index}, we can calculate:
\begin{align*}
&\dim (\bmodsp)\\
&=\dim (\modsp)\\
 &= \dim(X) - \left( \dim ((P_3)^3) - \dim(\Delta^3) \right) \\
&= \dim (\Mfro{1}{2}{3}{p_1}) + \dim (\Mfro{2}{3}{3}{p_2}) + \dim (\Mto{1}{3}{3}{p_0}) - 2(n+3N) \\
&= \dim (W^-_{p_1}(\w{1}{2}{3})) + \dim (W^-_{p_2}(\w{2}{3}{3})) + \dim (W^+_{p_0}(\w{1}{3}{3}))- 2(n+3N) \\
&= (n+3N) - \ind_{\w{1}{2}{3}}(p_1) + (n+3N) - \ind_{\w{2}{3}{3}}(p_2) + \ind_{\w{1}{3}{3}}(p_0) - 2(n+3N) \\
&= -(|p_1| + N) - (|p_2| + N) + (|p_0| + 2N) \\
&= |p_0| -  |p_1| - |p_2|.
\end{align*}


It remains to show that $\bmodsp$ is compact. This would be immediate if $\overline{X}$ were compact, as $\Delta^3$ is closed in $(P_3)^3$ and so $\overline{E}_s^{-1}(\Delta^3)$ is closed in $\overline{X}$. We will argue, instead, that trajectory sequences that cause $\overline{X}$ to be noncompact do not show up in trees in $\bmodsp$. We know that broken half-infinite trajectory spaces for Morse-Smale pairs on a closed manifold are compact from \cite[Theorem 2.3]{wehrheim:morse}, so issues of noncompactness in 
$$\overline{X} = \overline{\M}_{1,2;3}(p_1,P_3) \times \overline{\M}_{2,3;3}(p_2,P_3) \times \overline{\M}_{1,3;3}\left(\left\lbrace\w{1}{3}{3} > \dfrac{\rho}{8}\right\rbrace, p_0\right)$$ stem from the noncompactness of $P_3$ and $\{\w{1}{3}{3} > \dfrac{\rho}{8}\}$. In particular, the spaces $\Mfrob{i}{j}{3}{p_i}$ could contain a sequence of trajectories whose finite ends (i.e., images of $\ev{i}{j}{3}{+}$) diverge. Similarly, there could be sequence in $\overline{\M}_{1,3;3}(\{\w{1}{3}{3} > \dfrac{\rho}{8}\}, p_0)$ whose limit has finite end, given by $\ev{i}{j}{3}{-}$, in the level set $\{\w{1}{3}{3} = \dfrac{\rho}{8}\}$. 

As $\bmodsp$ is a metric space, to show that it is compact it suffices to prove sequential compactness. Suppose $\Gamma_n = (\overline{\gamma}_1, \overline{\gamma}_2, \overline{\gamma}_3)_n$ is a sequence of trees in $\bmodsp$. The same proofs of Lemma \ref{lem:cpct-set} and \ref{awayfrom0} show that, for all $n$, $\Gamma_n \subset K_s$ and $\w{1}{3}{3}((\gamma_3)_n) > \dfrac{\rho}{4} > \dfrac{\rho}{8}$. 

With these bounds, the convergence of a subsequence of $\Gamma_n$ follows as in the proof of \cite[Theorem 2.3]{wehrheim:morse} and \cite[Proposition 3]{bhsmale}. This was shown by defining a continuous reparametrization of the images of trajectories in the sequence with bounded derivatives on the complements of neighborhoods of critical points. This implies the equicontinuity of these reparametrizations, which, by the Arzel\`{a}-Ascoli Theorem, gives a convergent subsequence.
\end{proof}

\begin{defn}\label{modsp}
We may describe the 0-stratum $$\modsp \coloneqq \bmodsp_0$$ in the following way:

Given a generating family $F:M \times \R^N \to \R$, pick metrics $g_{i,j;3}$ as in Definition \ref{EMS}. Let $S$ be a perturbation ball as in Definition \ref{pert} and form the $E_{i,j;3}$ maps as in Definition \ref{Eijbroken}. Theorem \ref{cornertrees} implies that we can choose $s=(s_1,s_2,s_3) \in S \times S \times S$ so that the following set is a smooth manifold.
$$\mathcal{M}(p_1,p_2; p_0 | s) = 
  \left\lbrace 	(\gamma_1,\gamma_2,\gamma_3)  \mathrel{}\middle|  \mathrel{} \begin{aligned} 
  & \gamma_1: \left( -\infty, 0 \right] \rightarrow M \times \R^N \times \R^N \times \R^N, \\
  & \gamma_2: \left( -\infty, 0 \right] \rightarrow  M \times \R^N \times \R^N \times \R^N, \\ 
  & \gamma_3: \left[0, \infty \right) \rightarrow  M \times \R^N \times \R^N \times \R^N, \\ 
  & \dfrac{d \gamma_1}{dt} = \nabla_{g_{1,2;3}} \w{1}{2}{3}, \dfrac{d \gamma_2}{dt} = \nabla_{g_{2,3;3}} \w{2}{3}{3}, \\ &\dfrac{d \gamma_3}{dt} = \nabla_{g_{1,3;3}} \w{1}{3}{3}, \\
  & E_{1,2;3}(\gamma_1,s_1)= E_{2,3;3}(\gamma_2,s_2) = E_{1,3;3}(\gamma_3,s_3),\\
  & \lim_{t \to -\infty} \gamma_1(t) = p_1, \lim_{t \to -\infty} \gamma_2(t) = p_2, \\
  & \lim_{t \to \infty} \gamma_3(t) = p_0 \end{aligned} \right\rbrace  $$ 
\end{defn}

We apply Theorem \ref{cornertrees} to see that a 1-dimensional $\modsp$ has a natural compactification through the addition of trees with once-broken edges, see Figure \ref{brokentrees}.

\begin{cor} \label{compactification}

Given $p_1, p_2, p_0$ with $|p_0| -  |p_1| - |p_2| = 1$, $\modsp$ can be compactified to a 1-manifold $\bmodsp$ with boundary 
$$\begin{aligned}
 \partial \bmodsp  \coloneqq &\bigcup_{p_1'} \Mtofro{1}{2}{p_1}{p_1'} \times \mathcal{M}(p_1',p_2; p_0 | s) \\
 & \bigcup_{p_2'} \Mtofro{2}{3}{p_2}{p_2'} \times \mathcal{M}(p_1,p_2'; p_0 | s) \\
& \bigcup_{p_0'} \mathcal{M}(p_1,p_2;p_0' | s) \times \Mtofro{1}{3}{p_0'}{p_0},
\end{aligned}$$
where the unions are taken over $p_1' \in  \V{|p_1|+1},$ $p_2' \in  \V{|p_2|+1},$ and $p_0' \in  \V{|p_0|-1}$.

%
\end{cor}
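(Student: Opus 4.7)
The plan is to obtain this statement as a direct consequence of Theorem \ref{cornertrees}, by unpacking the structure of its 1-stratum. With the hypothesis $|p_0|-|p_1|-|p_2|=1$, Theorem \ref{cornertrees} immediately yields that $\bmodsp$ is a compact smooth 1-manifold with corners whose $0$-stratum is $\modsp$. Since a compact 1-manifold with corners has boundary equal to its 1-stratum, it remains only to identify $\bmodsp_1=\overline{X}_1 \cap \overline{E}_s^{-1}(\Delta^3)$ explicitly.

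By Lemma \ref{cornerproducts} applied twice to $\overline{X}=\overline{\M}_{1,2;3}(p_1,P_3)\times \overline{\M}_{2,3;3}(p_2,P_3)\times \overline{\M}_{1,3;3}(\{\w{1}{3}{3}>\tfrac{\rho}{8}\},p_0)$, the 1-stratum $\overline{X}_1$ is the disjoint union of three pieces, depending on which of the three factors carries a single break while the other two lie in their 0-strata. By Definition \ref{defstrata}, the 1-stratum of $\overline{\M}_{1,2;3}(p_1,P_3)$ decomposes as a disjoint union over intermediate critical points $p_1'\in \crit_+(w)$ of $\Mtofro{1}{2}{p_1}{p_1'}\times \Mfro{1}{2}{3}{p_1'}$ (here I use $\Mfro{1}{2}{3}{p_1'}$ to denote the half-infinite trajectory space with upper endpoint $p_1'$), and similarly for the other two factors. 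Intersecting with $\overline{E}_s^{-1}(\Delta^3)$ and observing that the evaluation map $\overline{E}_{1,2;3}$ reads off only the finite endpoint of the last (half-infinite) segment, a broken tree with its single break lying on the first edge is precisely a pair $(\sigma,(\widetilde\gamma_1,\gamma_2,\gamma_3))$ where $\sigma\in\Mtofro{1}{2}{p_1}{p_1'}$ and $(\widetilde\gamma_1,\gamma_2,\gamma_3)\in\mathcal{M}(p_1',p_2;p_0\mid s)$ for some $p_1'$. The analogous statements hold for breaks on the other two edges, producing the three disjoint families claimed in the statement.

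Next I would pin down which critical points $p_1', p_2', p_0'$ actually contribute. By Theorem \ref{cornertrees} the stratum $\bmodsp_1$ is a 0-dimensional manifold, so for each summand the dimensions of the two factors must be nonnegative and sum to $0$. The dimension of $\Mtofro{1}{2}{p_1}{p_1'}$ equals $|p_1'|-|p_1|-1$, and Theorem \ref{cornertrees} applied to the tree moduli gives $\dim \mathcal{M}(p_1',p_2;p_0\mid s)=|p_0|-|p_1'|-|p_2|$; these sum to $0$ by the hypothesis and nonnegativity forces $|p_1'|=|p_1|+1$ and $|p_0|-|p_1'|-|p_2|=0$, so $p_1'\in C^{|p_1|+1}(F)$. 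The analogous counts for the other two cases give $p_2'\in C^{|p_2|+1}(F)$ and $p_0'\in C^{|p_0|-1}(F)$. Finally, Lemma \ref{awayfrom0pre} (which, as noted in the remark following it, extends to broken trees since every broken tree contains an unbroken subtree near the interior vertex) guarantees that the limit point $p_0'$ of a break on the third edge satisfies $w(p_0')>0$ and hence indeed lies in $\V{|p_0|-1}$ rather than in the index-$N$ zero critical submanifold.

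The only mild subtlety in this argument is verifying that broken configurations with a break on the third edge genuinely remain within $\overline{\M}_{1,3;3}(\{\w{1}{3}{3}>\tfrac{\rho}{8}\},p_0)$ rather than escaping through the level set $\{\w{1}{3}{3}=\tfrac{\rho}{8}\}$; but this is exactly what Lemma \ref{awayfrom0} secures uniformly in $s$, by showing that the common finite endpoint lies strictly above the level $\tfrac{\rho}{4}$, leaving a definite gap to the cutoff $\tfrac{\rho}{8}$. With this in hand, the three families exhaust $\partial \bmodsp$ and the corollary follows.
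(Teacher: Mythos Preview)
Your proposal is correct and follows essentially the same approach as the paper, which simply records the corollary as an application of Theorem \ref{cornertrees} without writing out the details. You have filled in precisely those details: decomposing $\overline{X}_1$ via Lemma \ref{cornerproducts}, reading off the 1-strata from Definition \ref{defstrata}, and using the dimension formula to pin down the gradings of the intermediate critical points.
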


\begin{figure}[htb] 
\includegraphics[width=1 \textwidth]{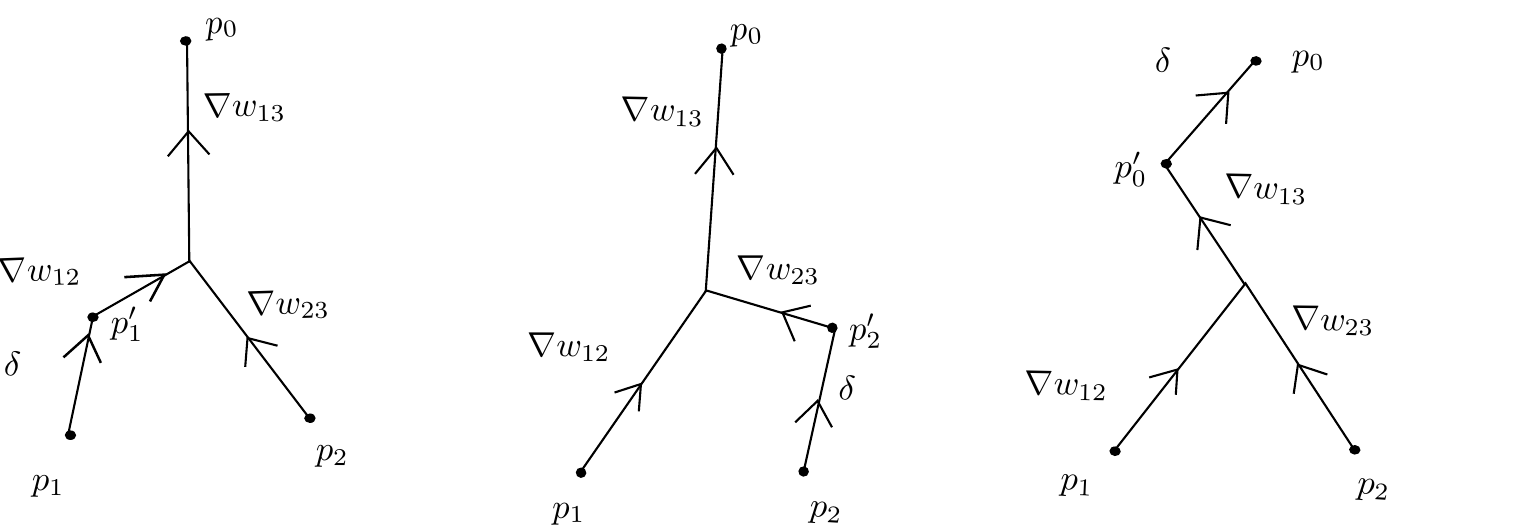} 
\caption{Elements in $\partial \modsp$ for $s=(0,0,0)$.} 
\label{brokentrees}
\end{figure}

\section{Product Structure}\label{sec:product}


Now that we have defined the manifold $\modsp$ and its compactification $\bmodsp$, a manifold with corners, we define a map on the cochain level of $GH^*(F)$ by counting isolated trees in $\modsp$ and show it to be a cochain map by considering a one-dimensional $\bmodsp$. Note that, for $p_1,p_2,p_0 \in \crit_+(w)$, Theorem \ref{brokentrees} implies that isolated trees in $\modsp$ satisfy 

$$|p_0| = |p_1| + |p_2| $$

\begin{defn}\label{productmap}
Given a generating family $F:M\times \R^N \to \R$, we define a map
 $$m_2: \V{i} \otimes \V{j}\rightarrow \V{i+j} $$
as follows: for critical points $p_1, p_2 \in \crit_+(w)$, define

$$m_2(p_1 \otimes p_2) = \sum ( \#_{\Z_2} \modsp ) \cdot p_0 $$ 
where the sum is taken over $p_0 \in \crit_+(w)$ 
such that $|p_0| = |p_1| + |p_2| $. Extend the product bilinearly over the tensor product.
\end{defn}


The following lemma shows that $m_2$ descends to a map on cohomology:

\begin{lem} The map $$m_2: \V{i} \otimes \V{j}\rightarrow \V{i+j} $$ is a cochain map, i.e., the following diagram commutes: 

 \begin{equation} \label{eqn:cochain}
      \xymatrix{
       \V{} \otimes \V{} \ar[d]_{\delta \otimes 1 + 1 \otimes \delta}  \ar[r]^{\quad \quad m_2} &\V{} \ar[d]^{\delta} \\
       \V{} \otimes \V{} \ar[r]^{\quad \quad m_2} &\V{}.
      }
    \end{equation}

\end{lem}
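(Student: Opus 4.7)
The plan is to apply the standard ``boundary of a compact $1$-manifold'' argument to $\bmodsp$ in the case where $|p_0| - |p_1| - |p_2| = 1$. By Theorem~\ref{cornertrees}, for a generic perturbation parameter $s$ the space $\bmodsp$ is a compact smooth $1$-manifold with corners whose $0$-stratum is $\modsp$ and whose $1$-stratum (boundary) is described explicitly by Corollary~\ref{compactification}. Since the mod $2$ count of the boundary of any compact $1$-manifold is zero, summing over all appropriate $p_0$ with $|p_0| = |p_1|+|p_2|+1$ will yield the desired cochain identity.

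Concretely, I would fix $p_1, p_2 \in \crit_+(w)$ and pick $p_0 \in \crit_+(w)$ with $|p_0| = |p_1| + |p_2| + 1$, so that $\modsp$ is a $1$-manifold. Corollary~\ref{compactification} decomposes $\partial\bmodsp$ into a disjoint union of products of $0$-dimensional trajectory/tree moduli spaces corresponding to breaking on one of the three edges of the Y-tree: breaking on the incoming $\w{1}{2}{3}$ edge at an intermediate critical point $p_1' \in \V{|p_1|+1}$, breaking on the incoming $\w{2}{3}{3}$ edge at $p_2' \in \V{|p_2|+1}$, or breaking on the outgoing $\w{1}{3}{3}$ edge at $p_0' \in \V{|p_0|-1} = \V{|p_1|+|p_2|}$. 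Counting mod $2$ gives
\begin{align*}
0 &\equiv \#_{\Z_2}\partial\bmodsp \\
&= \sum_{p_1'} \#_{\Z_2}\Mtofro{1}{2}{p_1}{p_1'} \cdot \#_{\Z_2}\mathcal{M}(p_1',p_2;p_0|s) \\
&\quad + \sum_{p_2'} \#_{\Z_2}\Mtofro{2}{3}{p_2}{p_2'} \cdot \#_{\Z_2}\mathcal{M}(p_1,p_2';p_0|s) \\
&\quad + \sum_{p_0'} \#_{\Z_2}\mathcal{M}(p_1,p_2;p_0'|s) \cdot \#_{\Z_2}\Mtofro{1}{3}{p_0'}{p_0}.
\end{align*}

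Under the identifications $\Mtofro{i}{j}{p}{q} \cong \M(p,q)$ from Section~\ref{sec:extdiff} (the extended-to-original trajectory bijection) and the definitions of $\delta$ and $m_2$, the three sums are precisely the coefficients of $p_0$ in $m_2(\delta p_1 \otimes p_2)$, $m_2(p_1 \otimes \delta p_2)$, and $\delta \, m_2(p_1 \otimes p_2)$ respectively. Summing over all such $p_0$ and reading off coefficients gives the cochain map relation
\[
\delta \circ m_2 \;=\; m_2 \circ (\delta \otimes 1 + 1 \otimes \delta),
\]
so that diagram~(\ref{eqn:cochain}) commutes.

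The main obstacle, and really the content already handled by the earlier sections, is the identification of $\partial\bmodsp$ with exactly these three types of once-broken configurations, together with the guarantee that no other degenerations appear (in particular, that the outgoing edge cannot break at a critical point outside $\V{}$ and that trees cannot ``escape to infinity'' in the non-compact fiber directions). These are exactly what Lemma~\ref{awayfrom0} (keeping the broken $\w{1}{3}{3}$ edge in the region $\{\w{1}{3}{3} > \rho/8\}$, so all intermediate breaks occur at positive-valued critical points) and Lemma~\ref{lem:cpct-set} together with the compactness conclusion of Theorem~\ref{cornertrees} provide, so once these are in hand the argument reduces to the routine $1$-manifold boundary count above.
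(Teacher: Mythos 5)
Your argument is correct and is essentially the paper's own proof: compactify a $1$-dimensional $\modsp$ via Corollary~\ref{compactification}, count the once-broken boundary configurations mod $2$, and match the three breaking types with the coefficients of $m_2(\delta p_1 \otimes p_2)$, $m_2(p_1 \otimes \delta p_2)$, and $\delta\, m_2(p_1 \otimes p_2)$. Your added remarks on the trajectory-space identifications and on Lemmas~\ref{awayfrom0} and \ref{lem:cpct-set} simply make explicit the ingredients the paper invokes through Theorem~\ref{cornertrees}, so no substantive difference in approach.
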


\begin{proof}
Consider a $1$-dimensional moduli space of flow trees, $\modsp$.  Theorem \ref{brokentrees} shows that such a space occurs when 
$$|p_0| = |p_1| + |p_2| +1 ,$$
so let $p_1 \in \V{i}$, $p_2 \in \V{j},$ and $p_0 \in \V{i+j+1}$.

Corollary \ref{compactification} gives an expression for $\partial \modsp$. In particular, the boundary of $\modsp$ consists of isolated trees with a single broken edge. After compactification, $\modsp$ is a compact $1$-manifold, so its boundary contains an even number of points. Thus, a $\Z_2$ count of both sides of the expression for $\partial \modsp$ gives us:
\begin{align}
\begin{split}
0 &= \sum_{p_1'} \#_{\Z_2} \Mtofro{1}{2}{p_1}{p_1'} \cdot  \#_{\Z_2} \M(p_1',p_2;p_0 | s) \\
&+ \sum_{p_2'} \#_{\Z_2} \Mtofro{2}{3}{p_2}{p_2'} \cdot  \#_{\Z_2} \M(p_1,p_2';p_0 | s) \\
&+ \sum_{p_0'}  \#_{\Z_2} \M(p_1,p_2;p_0' | s)  \cdot  \#_{\Z_2} \Mtofro{1}{3}{p_0'}{p_0}.
\end{split} \label{boundarycount}
\end{align}

This now implies the cochain map condition
$$ m_2(\delta p_1 \otimes p_2) + m_2(p_1 \otimes \delta p_2) = \delta m_2(p_1 \otimes p_2). $$ 
This follows since the terms on the right hand side of Equation \ref{boundarycount} are exactly the coefficients of the three terms in the cochain map condition.

As an example, consider the term $m_2(\delta p_1 \otimes p_2)$:

$$ m_2(\delta p_1 \otimes p_2) = \sum_{p_0} \#_{\Z_2} \M (\delta p_1,p_2,p_0 | s) \cdot p_0 $$
$$= \sum_{p_0} \#_{\Z_2} \M(\sum_{p_1'} \#_{\Z_2} \Mtofro{1}{2}{p_1}{p_1'} \cdot p_1',p_2,p_0 | s) \cdot p_0 $$

$$= \sum_{p_0} ( \sum_{p_1'}  \#_{\Z_2} \Mtofro{1}{2}{p_1}{p_1'} \cdot \#_{\Z_2} \M(p_1',p_2,p_0 | s) ) \cdot p_0.  $$

The other two terms follow similarly, which shows that $m_2$ is a cochain map, as desired.
\end{proof}

\begin{cor}\label{cor:product}
Given a generating family $F:M\times \R^N \rightarrow \R$, there is a product map on Generating Family Cohomology
$$\mu_2: GH^{i} (F) \otimes GH^{j} (F) \rightarrow GH^{i+j}(F). $$
\end{cor}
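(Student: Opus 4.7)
The plan is to deduce Corollary \ref{cor:product} as a formal consequence of the preceding lemma showing that $m_2$ is a cochain map. Since $GH^{*}(F)$ is defined as the cohomology of the complex $(C^{*}(F),\delta)$, and since the tensor product complex $(C^{*}(F)\otimes C^{*}(F),\delta\otimes 1+1\otimes\delta)$ has cohomology canonically isomorphic to $GH^{*}(F)\otimes GH^{*}(F)$ (working over $\Z_2$, so no flatness/Tor issues arise), any cochain map between these complexes descends to a well-defined map between cohomology groups.

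First I would recall the standard fact: if $\varphi:(A^{*},d_A)\to(B^{*},d_B)$ is a cochain map of degree zero between cochain complexes, then $\varphi$ sends cocycles to cocycles and coboundaries to coboundaries, hence induces a map $\varphi^{*}:H^{*}(A)\to H^{*}(B)$. Apply this to $\varphi=m_2$ with $A^{*}=(C^{*}(F)\otimes C^{*}(F),\delta\otimes 1+1\otimes\delta)$ and $B^{*}=(C^{*}(F),\delta)$, using the cochain map property established in diagram \eqref{eqn:cochain}. Concretely, if $p_1\in C^{i}(F)$ and $p_2\in C^{j}(F)$ are cocycles, then $(\delta\otimes 1+1\otimes\delta)(p_1\otimes p_2)=0$, so by the lemma $\delta m_2(p_1\otimes p_2)=0$, i.e.\ $m_2(p_1\otimes p_2)$ is a cocycle in $C^{i+j}(F)$. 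Similarly, if $p_1\otimes p_2=(\delta\otimes 1+1\otimes\delta)(\alpha)$ is a coboundary, then $m_2(p_1\otimes p_2)=\delta m_2(\alpha)$ is a coboundary, so the induced map on cohomology classes is well defined.

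Next, identify the cohomology of the tensor product complex with the tensor product of cohomologies. Since we work with $\Z_2$-coefficients and $C^{*}(F)$ is a free $\Z_2$-module (generated by $\Crit_+(w)$), the algebraic K\"unneth formula gives a canonical isomorphism
\[
H^{*}(C^{*}(F)\otimes C^{*}(F),\delta\otimes 1+1\otimes\delta)\ \cong\ GH^{*}(F)\otimes GH^{*}(F).
\]
Composing the induced map with this isomorphism yields the desired
\[
\mu_2:GH^{i}(F)\otimes GH^{j}(F)\longrightarrow GH^{i+j}(F).
\]

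There is essentially no obstacle here: all of the analytic and geometric content (smoothness of $\modsp$, compactness of $\bmodsp$, the description of its boundary) has already been absorbed into the construction of $m_2$ and the verification that it is a cochain map. The corollary is a purely formal consequence, so the only thing to write carefully is the invocation of the K\"unneth isomorphism over $\Z_2$ to identify the domain with $GH^{i}(F)\otimes GH^{j}(F)$. One could note in passing that $\mu_2$ is independent of the auxiliary choices (metrics $g_w,g_Q$ and perturbation $s$) used to define $m_2$, but a full proof of that independence is deferred to the continuation-map arguments developed later in the paper.
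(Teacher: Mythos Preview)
Your proposal is correct and matches the paper's approach: the corollary is stated without proof in the paper, being an immediate formal consequence of the preceding lemma that $m_2$ is a cochain map. Your invocation of the K\"unneth isomorphism is slightly more machinery than necessary---the Leibniz rule $\delta m_2(p_1\otimes p_2)=m_2(\delta p_1\otimes p_2)+m_2(p_1\otimes\delta p_2)$ directly shows that $m_2$ sends pairs of cocycles to cocycles and that replacing either factor by a coboundary yields a coboundary, so $\mu_2$ is well defined on $GH^i(F)\otimes GH^j(F)$ without passing through $H^*(C\otimes C)$---but this is a minor stylistic point and your argument is sound.
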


\section{Invariance with respect to Equivalences of $F$}\label{secinvar}
Recall from Subsection \ref{ssec:gf-back} that there is a notion of equivalence $\sim$ of generating families for a Legendrian submanifold $\Lambda \subset J^1M$. Lemma \ref{lem:cohom-equiv} shows that $GH^*(F)$ is invariant under $\sim$. In this section, we show that the product is unchanged under $\sim$ as well.

This amounts to showing that, when $\widehat{F}$ is obtained from $F$ by stabilization or fiber-preserving diffeomorphism 
resulting in isomorphisms $GH^*(F) \to GH^*(\widehat{F})$, the following diagram commutes:
\begin{equation} \label{eqn:productequiv}
      \xymatrix{
       GH^*(F) \otimes GH^*(F) \ar[d]_{\cong}  \ar[r]^{ \quad\quad \mu_2} &GH^*(F) \ar[d]^{\cong} \\
       GH^*(\widehat{F}) \otimes GH^*(\widehat{F}) \ar[r]^{\quad \quad \widehat{\mu}_2} &GH^*(\widehat{F}).
      }
    \end{equation}

\subsection{Fiber Preserving Diffeomorphism}\label{fibpresdif}
In this subsection, we analyze how the product is affected when we pre-compose our generating family $F: M \times \R^N \to \R$ with a fiber-preserving diffeomorphism $\Phi: M \times \R^N \to M \times \R^N$. Recall from Subsection \ref{ssec:gf-back} that, by definition, $\Phi(x,e) = (x, \phi_x(e))$ for a smooth family of diffeomorphisms $\phi_x: \R^N \to \R^N.$ As in Lemma \ref{lem-fpd},  we consider diffeomorphisms $\phi$ that are isometries outside the compact set $K_E$ since our setup of gradient flow uses metrics that are Euclidean outside $K$; see Definitions \ref{wEMS} and \ref{EMS}.

\begin{rem}\label{phidiag}
We may extend $\Phi$ naturally to a diffeomorphism on $P_3 = M \times R^{3N}$: abusing notation, let $\Phi: P_3 \to P_3$ be defined as 
$$(x, e_1, e_2, e_3) \mapsto (x, \phi_x(e_1), \phi_x(e_2), \phi_x(e_3)).$$ 
\end{rem}

\begin{lem}
For $\Delta^3 \subset (P_3)^3,$ $\Phi^3(\Delta^3) = \Delta^3$.
\end{lem}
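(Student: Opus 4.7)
The plan is to unpack the definitions: $\Phi^3 \colon (P_3)^3 \to (P_3)^3$ must act diagonally, sending $(y_1, y_2, y_3)$ to $(\Phi(y_1), \Phi(y_2), \Phi(y_3))$, and $\Delta^3$ is the ``thin'' diagonal consisting of triples of the form $(y, y, y)$.

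First I would verify the inclusion $\Phi^3(\Delta^3) \subseteq \Delta^3$: for any $y \in P_3$,
\[
\Phi^3(y, y, y) = (\Phi(y), \Phi(y), \Phi(y)) \in \Delta^3,
\]
since all three coordinates coincide. This step uses only that the same map $\Phi$ is applied in each factor.

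Next I would verify the reverse inclusion using the fact that $\Phi$ is a diffeomorphism of $P_3$ and in particular a bijection. Given any $(z, z, z) \in \Delta^3$, set $y \coloneqq \Phi^{-1}(z)$; then $(y, y, y) \in \Delta^3$ and $\Phi^3(y, y, y) = (z, z, z)$, so $\Delta^3 \subseteq \Phi^3(\Delta^3)$.

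There is essentially no obstacle here: the only substantive point is that the diffeomorphism extension from Remark \ref{phidiag} acts by the same $\Phi$ in each of the three factors, which is what allows the diagonal to be preserved. The bijectivity of $\Phi$ (as a diffeomorphism) then upgrades the set-containment to an equality.
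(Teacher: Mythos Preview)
Your proof is correct and follows essentially the same approach as the paper. The paper's version is even terser: it simply writes out coordinates on $(P_3)^3$, identifies $\Delta^3$ as the locus where the three copies of each coordinate agree, and remarks that this condition is preserved under $\Phi$; the author notes that the main purpose of the proof is to recall the coordinate description of $\Delta^3$.
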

\begin{proof}
This is not a hard fact, but we write out the proof to recall the space $\Delta^3$. We use coordinates $(x,e_1,e_2,e_3)$ on $P_3 = M \times \R^{3N}$, so we have natural coordinates 
$$\left( x_1, e_{11}, e_{21}, e_{31}, x_2, e_{12}, e_{22}, e_{32}, x_3, e_{13}, e_{23}, e_{33}\right). $$ With these coordinates, $\Delta^3$ is the submanifold in which $x_1 = x_2 = x_3$ and $e_{i1} = e_{i2} =e_{i3}$ for $i = 1, 2, 3$, which is preserved under $\Phi$.
\end{proof}

Lemma \ref{gradvfcorres} showed that critical points and gradient trajectories correspond under diffeomorphism. This induces diffeomorphisms of the stable and unstable manifolds which gives a diffeomorphism $\widetilde{X} \cong X$.

\begin{equation} \label{eqn:fpdpert}
      \xymatrix{
       X \times S^3 \ar[d]  \ar[r]^{E} &(P_3)^3 \ar[d]^{\cong} &\Delta^3 \ar@{_{(}->}[l] \ar@{|->}[d]\\
       \widetilde{X} \times \widetilde{S}^3 \ar[r]^{E} &(P_3)^3 &\Delta^3 \ar@{_{(}->}[l] }
    \end{equation}

\begin{lem}
Suppose $\widetilde{F}$ is obtained from $F$ through fiber-preserving diffeomorphism. Let $s= (s_1,s_2,s_3) \in S \times S \times S$ and $p_i \in \V{}$ so that $\modsp$ is a 0-dimensional manifold. Then, for corresponding $\widetilde{p}_i \in C(\widetilde{F})$, there exists an $\widetilde{s}= (\widetilde{s}_1, \widetilde{s}_2, \widetilde{s}_3) \in \widetilde{S} \times \widetilde{S} \times \widetilde{S}$ for some $\widetilde{\delta}$ ball $\widetilde{S}$ such that $\widetilde{\M}(\widetilde{p}_1,\widetilde{p}_2;\widetilde{p}_0 | \widetilde{s})$ is in bijection with $\modsp$.
\end{lem}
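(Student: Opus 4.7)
The plan is to construct $\widetilde{s}$ explicitly via the diffeomorphism $\Phi$ extended to $P_3$ (as in Remark \ref{phidiag}), combined with a parametrized-transversality argument on the universal perturbed moduli space.  First, I would apply Lemma \ref{gradvfcorres} to show that gradient trajectories of $\w{i}{j}{3}$ with respect to $g_{i,j;3}$ are identified, via $\gamma\mapsto\Phi^{-1}\circ\gamma$, with trajectories of $\widetilde{w}_{i,j;3}:=\w{i}{j}{3}\circ\Phi$ with respect to the pullback metric $\widetilde{g}_{i,j;3}=(\Phi^{-1})^{*}g_{i,j;3}$.  Because $\Phi$ is an isometry outside the nonlinear-support set $K$, the pullback metric satisfies the conditions of Definition \ref{EMS}, and $\widetilde{w}_{i,j;3}$ is an extended difference function for $\widetilde{F}=F\circ\Phi$ with stabilizing term $\widetilde{Q}_{x}(e)=\pm Q(\phi_{x}(e))$ — this is precisely the quadratic-like (rather than strictly quadratic) flexibility built into Definition \ref{extdiff} via Remark \ref{Qissues}(2).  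Taking products over the three index pairs yields a diffeomorphism $\Psi:X\to\widetilde{X}$ respecting the critical-point correspondences given by $\iota$.

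Next I would identify a candidate $\widetilde{s}$ tree-by-tree.  Since $\modsp$ is finite, for each $\Gamma=(\gamma_{1},\gamma_{2},\gamma_{3})\in\modsp$ with common meeting point $y_{\Gamma}\in P_{3}$, set
\[
\widetilde{s}_{i}^{\Gamma}:=\Phi^{-1}(y_{\Gamma})-\Phi^{-1}(\gamma_{i}(0))
\]
(with the compact-$M$ analogue using the submersion $\pi$ from Definition \ref{pert}), so that $\Psi(\Gamma)$ achieves common meeting point $\Phi^{-1}(y_{\Gamma})$ under the perturbed evaluation map $\widetilde{E}$ with perturbation $\widetilde{s}^{\Gamma}$.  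Because $\Phi^{-1}$ is a diffeomorphism and $\Phi^{-1}(y_{\Gamma})-\Phi^{-1}(y_{\Gamma}-s_{i})=O(|s_{i}|)$, shrinking the original perturbation radius $\delta$ from Remark \ref{shrinkfiber} guarantees that every tree-dependent perturbation $\widetilde{s}^{\Gamma}$ lies in a preassigned $\widetilde{\delta}$-ball $\widetilde{S}$.

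The main obstacle is that $\widetilde{s}^{\Gamma}$ depends on $\Gamma$, whereas the lemma demands a single $\widetilde{s}$.  I would resolve this via the universal perturbed moduli space
\[
\widetilde{\mathcal{N}}:=\{(\widetilde{\Gamma},\widetilde{s}')\in\widetilde{X}\times\widetilde{S}^{3}:\widetilde{E}(\widetilde{\Gamma},\widetilde{s}')\in\Delta^{3}\}.
\]
The proof of Theorem \ref{cornertrees} shows that $\widetilde{E}$ is submersive in the $\widetilde{S}^{3}$-variable, so $\widetilde{\mathcal{N}}$ is a smooth manifold and the projection $\widetilde{\pi}:\widetilde{\mathcal{N}}\to\widetilde{S}^{3}$ has surjective derivative.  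Consequently, at each of the finitely many points $(\Psi(\Gamma),\widetilde{s}^{\Gamma})\in\widetilde{\mathcal{N}}$ the implicit function theorem provides an open neighborhood on which $\widetilde{\pi}$ restricts to a diffeomorphism onto an open neighborhood $U_{\Gamma}$ of $\widetilde{s}^{\Gamma}$ in $\widetilde{S}^{3}$.  I would choose $\widetilde{s}$ to be a regular value of $\widetilde{\pi}$ lying in $\bigcap_{\Gamma}U_{\Gamma}$; the IFT then yields a unique tree in $\widetilde{\M}(\widetilde{p}_{1},\widetilde{p}_{2};\widetilde{p}_{0}\mid\widetilde{s})$ near each $\Psi(\Gamma)$, producing an injection $\modsp\hookrightarrow\widetilde{\M}(\widetilde{p}_{1},\widetilde{p}_{2};\widetilde{p}_{0}\mid\widetilde{s})$.

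The final difficulty is ruling out extraneous elements of $\widetilde{\M}(\widetilde{s})$ not arising from any $\Gamma\in\modsp$.  For this, I would invoke properness of $\widetilde{\pi}$, which follows from Lemma \ref{lem:cpct-set} together with the compactification in Theorem \ref{cornertrees}: any sequence in $\widetilde{\M}(\widetilde{s}_{n})$ with $\widetilde{s}_{n}\to\widetilde{s}^{\Gamma}$ has a subsequence converging in $\widetilde{\mathcal{N}}$, and the limit is forced (by the bijection of unperturbed trajectory spaces established in Step 1) to lie among the $\Psi(\Gamma)$'s.  Choosing $\widetilde{s}$ sufficiently close to all $\widetilde{s}^{\Gamma}$ then forces $|\widetilde{\M}(\widetilde{s})|\le|\modsp|$, upgrading the injection above to the desired bijection.
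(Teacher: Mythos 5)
Your first two steps are, in substance, the paper's entire proof: the paper transports each tree using Lemma \ref{gradvfcorres} and Remark \ref{phidiag}, defines $\widetilde{s}_i$ uniquely from the transported meeting point, and then asserts that for this $\widetilde{s}$ one has $\widetilde{E}_{\widetilde{s}}\pitchfork\Delta^3$ and $\widetilde{s}_i\in\widetilde{S}$, with $\widetilde{\delta}$ chosen by the same uniform-continuity condition you quote. (Your side remark that the pullback metric lies in the class of Definition \ref{EMS} is not literally correct, since the pullback of a split metric under a fiber-preserving diffeomorphism need not split, but the paper makes the same move in Lemma \ref{lem-fpd}, so this is at the paper's level of rigor.) You have correctly isolated the point the paper leaves implicit: the recipe produces a perturbation $\widetilde{s}^\Gamma$ depending on the tree $\Gamma$, while the statement demands one $\widetilde{s}$ for all trees at once.

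However, your globalization does not close that gap. First, the set $\bigcap_\Gamma U_\Gamma$ may well be empty: each $U_\Gamma$ is a small implicit-function-theorem neighborhood of $\widetilde{s}^\Gamma$, and $\widetilde{s}^\Gamma_i=\Phi^{-1}(y_\Gamma)-\Phi^{-1}(y_\Gamma-s_i)$ varies with the meeting point $y_\Gamma$ through the nonlinearity of $\Phi^{-1}$, so distinct trees give distinct, possibly widely separated values of $\widetilde{s}^\Gamma$; nothing forces a single $\widetilde{s}$ to lie near all of them, so the injection you build never materializes, and the same issue undermines the final counting step. Second, the implicit-function-theorem step presupposes that $d\widetilde{\pi}$ is invertible at $(\Psi(\Gamma),\widetilde{s}^\Gamma)$. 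Surjectivity of $d\widetilde{\pi}$ at a point of $\widetilde{\mathcal{N}}$ is not a consequence of $\widetilde{E}$ being submersive in the $\widetilde{S}^3$-variable: a short computation shows it is equivalent to transversality of the fixed-perturbation map $\widetilde{E}_{\widetilde{s}^\Gamma}$ to $\Delta^3$ at $\Psi(\Gamma)$, which is exactly what needs proof (and is the very assertion the paper makes without argument). It also does not follow formally from $E_s\pitchfork\Delta^3$ at $\Gamma$: under your correspondence the three endpoint derivatives get composed with $d\Phi^{-1}$ at the three distinct points $\gamma_i(0)=y_\Gamma-s_i$, and applying three different isomorphisms to the three factors need not preserve transversality to the diagonal (already in $\R^2$, rotating one factor destroys it). A workable repair is to use the perturbations the correspondence actually respects, namely $\widetilde{\gamma}(0)\mapsto\Phi^{-1}\bigl(\Phi(\widetilde{\gamma}(0))+s_i\bigr)$, which carry $\modsp$ bijectively onto the analogous moduli space for $\widetilde{F}$ because $\Phi^{-1}$ applied diagonally preserves $\Delta^3$, and then to compare that space with one defined by a constant generic $\widetilde{s}\in\widetilde{S}^3$ via a continuation/chain-homotopy argument in the spirit of Corollary \ref{indeppert}; at the level of cohomology this yields what Corollary \ref{cor:fpd} requires, even though it replaces the claimed chain-level bijection by an equality of counts up to chain homotopy.
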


\begin{proof}
Given a tree $\Gamma = \{\gamma_1, \gamma_2, \gamma_3\} \in \modsp$, there exists a $(y,y,y) \in \Delta^3 \subset (P_3)^3$ such that $E(\Gamma) = (y,y,y)$, i.e., $E_{1,2;3}(\gamma_1) = E_{2,3;3}(\gamma_2) = E_{1,3;3} (\gamma_3) = y$.

In the case that $M=\R^m$, this means that $y = \gamma_1(0) +s_1 = \gamma_2(0) +s_2 = \gamma_3(0) + s_3$. From Lemma \ref{gradvfcorres}, we know that there are corresponding $\widetilde{\gamma}_i$. Remark \ref{phidiag} gives us an element $\widetilde{y} = \Phi(y) \in \Delta^3$. Thus, there is a unique way to pick $\widetilde{s}_i$ so that $\widetilde{y} = \widetilde{\gamma}_1(0) +\widetilde{s}_1 = \widetilde{\gamma}_2(0) +\widetilde{s}_2 = \widetilde{\gamma}_3(0) + \widetilde{s}_3$.

For this $\widetilde{s} = (\widetilde{s}_1, \widetilde{s}_2, \widetilde{s}_3)$, we have that $E_{\widetilde{s}} \pitchfork \Delta^3$ and $\widetilde{s}_i \in \widetilde{S}$ for each $i$, that is, $|\widetilde{s}_i| < \widetilde{\delta}$. Here, $\widetilde{\delta}$ is such that for all $y_1, y_2 \in \widetilde{K}$, $|y_1-y_2| < \widetilde{\delta}$ implies that $|(\w{i}{j}{3}\circ \Phi)(y_1) - (\w{i}{j}{3}\circ \Phi)(y_2)| < \rho/4$, where $\rho$ is the least positive critical value of $w$, which is the same as the least positive critical value of $w \circ \Phi$.
\end{proof}

\begin{rem}
We see here why it was necessary to define the extended difference functions in 
Definition \ref{extdiff} as ``quadratic-like" stabilizations of the difference function $w$. In particular, if $Q(e_k) = e_k^2,$, then $$(\w{i}{j}{3} \circ \Phi)(x,e_1,e_2,e_3) = (w \circ \Phi)(x,e_i,e_j) \pm (\phi_x(e_k))^2, $$
and $(\phi_x(e_k))^2$ is not necessarily a quadratic form. It is however, a function with only one critical point with preserved index and preserved critical value. In short, if $Q$ is quadratic-like as in Definition \ref{extdiff}, $Q \circ \phi_x$ is as well.
\end{rem}

\begin{cor}\label{cor:fpd}
Suppose $F:M\times \R^N \rightarrow \R$ is altered by a fiber-preserving diffeomorphism $\Phi: M\times \R^N \rightarrow M\times \R^N$, where $\Phi(x, e) = (x, \phi_x(e))$ for some diffeomorphisms $\phi_x: \R^N \rightarrow \R^N$ resulting in $\widetilde{F}= F \circ \Phi$. Then the following diagram commutes:
\begin{equation} \label{eqn:fiberpresdiffeo}
      \xymatrix{
       GH^*(F) \otimes GH^*(F) \ar[d]_{\cong}  \ar[r]^{ \quad\quad \mu_2} &GH^*(F) \ar[d]^{\cong} \\
       GH^*(\widetilde{F}) \otimes GH^*(\widetilde{F}) \ar[r]^{\quad \quad \widetilde{\mu_2}} &GH^*(\widetilde{F}).
      }
    \end{equation}
\end{cor}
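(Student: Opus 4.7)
The plan is to promote diagram (\ref{eqn:fiberpresdiffeo}) to the cochain level and then descend it to cohomology. The chain isomorphism $C^*(F) \to C^*(\widetilde{F})$ is the one supplied by Lemma \ref{lem-fpd}: the diagonal extension $\widetilde{\Phi}$ of $\Phi$ from Remark \ref{phidiag} restricts to a grading- and critical-value-preserving bijection $\crit_+(w) \to \crit_+(\widetilde{w})$, sending each $p_i$ to its counterpart $\widetilde{p}_i$, where $\widetilde{w}$ is the difference function of $\widetilde{F} = F \circ \Phi$. I plan to show that this chain-level isomorphism intertwines $m_2$ and $\widetilde{m}_2$.

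Fix $p_1,p_2,p_0 \in \crit_+(w)$ with $|p_0| = |p_1| + |p_2|$ so that, for a generic $s \in S \times S \times S$, $\modsp$ is a compact $0$-dimensional manifold. The lemma immediately preceding the corollary produces a perturbation $\widetilde{s} \in \widetilde{S} \times \widetilde{S} \times \widetilde{S}$ such that $\widetilde{\M}(\widetilde{p}_1, \widetilde{p}_2; \widetilde{p}_0 \mid \widetilde{s})$ is in bijection with $\modsp$. The bijection is realized concretely by pushing triples of gradient trajectories forward via $\widetilde{\Phi}$: by Lemma \ref{gradvfcorres}, $\widetilde{\Phi}$ carries trajectories of $\nabla_{g_{i,j;3}} \w{i}{j}{3}$ to trajectories of $\nabla_{(\widetilde{\Phi}^{-1})^* g_{i,j;3}} (\w{i}{j}{3} \circ \widetilde{\Phi})$, and the pullback metric lies in the admissible class for $\widetilde{F}$ because $\Phi$ is an isometry outside the compact set. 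Hence the $\Z_2$-counts agree, yielding $\widetilde{m}_2(\widetilde{p}_1 \otimes \widetilde{p}_2) = \widetilde{m_2(p_1 \otimes p_2)}$, where on the right we mean the image under the chain isomorphism. This is precisely commutativity of (\ref{eqn:fiberpresdiffeo}) on cochains, and it descends to cohomology since all maps involved are cochain maps.

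The main obstacle is the compatibility of auxiliary data: to conclude from the bijection of moduli spaces, one must ensure that the metric and perturbation used to compute $\widetilde{m}_2$ agree with those obtained by transporting via $\widetilde{\Phi}$. The metric issue is settled by the isometry-at-infinity hypothesis, which guarantees that $(\widetilde{\Phi}^{-1})^* g_{i,j;3}$ satisfies the conditions of Definition \ref{EMS}. The perturbation issue is the content of the preceding lemma: one verifies that $\widetilde{s}$ lies in an admissible perturbation ball and cuts out the moduli space transversely, using that $w$ and $\widetilde{w}$ share the same least positive critical value together with uniform continuity of the extended difference functions on the relevant compact set. Finally, independence of $\widetilde{\mu}_2$ from these particular choices should follow by a continuation argument parallel to Proposition \ref{metricinvrgh} and Lemma \ref{pathhtpy}, showing that different admissible metrics and perturbations $s$ produce chain-homotopic product maps; I expect the verification of this independence to be the only step that requires effort beyond quoting the preceding lemma.
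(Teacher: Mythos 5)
Your proposal is correct and follows essentially the same route as the paper: the chain-level bijection of moduli spaces via $\widetilde{\Phi}$ (pullback metric plus the transported perturbation $\widetilde{s}$ from the preceding lemma) gives commutativity on cochains, which descends to cohomology. Your final caveat about independence of the metric and perturbation choices is exactly how the paper handles it as well, deferring that verification to the continuation-flow-tree arguments that yield Corollaries \ref{indepmetric} and \ref{indeppert}.
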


\subsection{Stabilization}

Given a generating family $F: M \times \R^N \to \R$, define $F^{\pm}: M \times \R^N \times \R \to \R$ by $F^{\pm}(x,e,e') = F(x,e) \pm (e')^2$. To show invariance under stabilization, it suffices to show that the diagram \ref{eqn:productequiv} commutes for $\widehat{F}=F^\pm$. 

Since we showed that the product is invariant under fiber-preserving diffeomorphism in the previous subsection, for simplicity of the following argument, we may precompose with such a diffeomorphism and assume that the stabilization of the extended difference function is quadratic rather than quadratic-like.
With this assumption, observe that we can write out new extended difference functions $$\w{i}{j}{3}^\pm: M \times \R^{3N} \times R^3 \to \R$$ from $F^\pm$ in terms of stabilizations of $F$:
%
%
\begin{align*}
 \w{1}{2}{3}^\pm(x,e,e')&= F(x,e_1) \pm (e_1')^2 - F(x, e_2) \mp (e_2')^2 + e_3^2 +(e_3')^2 \\
 \w{2}{3}{3}^\pm(x,e,e')&= F(x,e_2) \pm (e_2')^2 - F(x, e_3) \mp (e_3')^2 + e_1^2 +(e_1')^2 \\
 \w{1}{3}{3}^\pm(x,e,e')&= F(x,e_1) \pm (e_1')^2 - F(x, e_3) \mp (e_3')^2 - e_2^2 -(e_2')^2,
\end{align*}
We may express these new stabilized extended difference functions as 
$$\w{i}{j}{3}^{\pm}(x,e_1,e_1',e_2,,e_2',e_3,e_3') =  \w{i}{j}{3}(x,e_1,e_2,e_3) + Q_{i,j;3}^\pm(e_1',e_2',e_3') $$
for different nondegenerate quadratic functions $Q_{i,j;3}^\pm: \R^3 \to \R$. 


\begin{lem}\label{stabcritpts}
Given a generating family $F$ and $F^\pm$ as above, if $p \in \V{}$, then there are corresponding critical points $p^\pm \in C(F^\pm)$ with the same critical value and $|p^\pm| = |p| +1$.
\end{lem}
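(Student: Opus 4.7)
The plan is to construct an explicit bijection between $\crit_+(w)$ and $\crit_+(w^\pm)$ that preserves critical values and shifts Morse indices by one. First I would rewrite the difference function of $F^\pm$ as the direct sum
$$w^\pm(x,e_1,e_1',e_2,e_2') = w(x,e_1,e_2) + Q^\pm(e_1',e_2'),$$
where $Q^\pm(e_1',e_2') = \pm(e_1')^2 \mp (e_2')^2$ is a nondegenerate quadratic form on $\R^2$ with unique critical point $(0,0)$, Morse index $1$, and critical value $0$. Since $dw^\pm = dw + dQ^\pm$ decouples along the extra $\R^2$ factor (this is just the argument used inside the proof of Lemma \ref{lem-stab} applied to the difference function rather than to the extended ones), the critical points of $w^\pm$ are precisely pairs of critical points of $w$ and $Q^\pm$; as $(0,0)$ is the sole critical point of $Q^\pm$, we obtain the bijection
$$\iota^\pm \colon \crit_+(w) \longrightarrow \crit_+(w^\pm), \qquad (x,e_1,e_2) \longmapsto (x,e_1,0,e_2,0).$$
Because $Q^\pm(0,0)=0$, critical values are preserved by $\iota^\pm$, so positive-valued critical points correspond to positive-valued critical points.

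Next I would compute the Morse index of $p^\pm := \iota^\pm(p)$ directly from the block-diagonal splitting of the Hessian:
$$\hess w^\pm(p^\pm) \;=\; \hess w(p) \,\oplus\, \hess Q^\pm(0,0).$$
The second block is a $2\times 2$ nondegenerate quadratic form of signature $(1,1)$ in both sign conventions (the $+$ case gives $\mathrm{diag}(2,-2)$, the $-$ case gives $\mathrm{diag}(-2,2)$), so it has Morse index $1$. Consequently
$$\ind w^\pm(p^\pm) \;=\; \ind w(p) + 1,$$
which is exactly the index shift asserted by $|p^\pm| = |p|+1$ in the notation used for $C(F^\pm)$ in this section.

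The bulk of the argument is the elementary Hessian calculation once the splitting $w^\pm = w + Q^\pm$ is in place; everything else is bookkeeping. The only point that requires care is verifying that in both signs the new $2\times 2$ block has signature $(1,1)$ rather than $(2,0)$ or $(0,2)$, so that the Morse index increases by exactly one and neither stays constant nor jumps by two.
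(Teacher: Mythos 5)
Your proof is correct and is essentially the paper's own argument: the paper gives no separate proof of this lemma, relying instead on exactly the splitting $w^\pm = w + Q^\pm$ with $Q^\pm(e_1',e_2') = \pm(e_1')^2 \mp (e_2')^2$ having a unique critical point of index $1$ and value $0$, which is carried out in the proof of Lemma \ref{lem-stab}. The only caveat is interpretive, and you handle it correctly: the assertion $|p^\pm| = |p|+1$ records the shift in Morse index of the difference function (consistent with the remark following the lemma), whereas under the shifted grading of Definition \ref{gradedvs} the degree of $p^\pm$ in $C(F^\pm)$ is actually preserved.
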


\begin{rem}
Note that, by construction, this correspondence passes to the extended difference functions and bijections in Lemma \ref{iotabij}: if $p \in \Crit(\w{i}{j}{3})$, then there is a corresponding critical point $p^\pm \in \Crit(\w{i}{j}{3}^{\pm})$ whose primed coordinates are 0. Hence, $\w{i}{j}{3}(p) = \w{i}{j}{3}^{\pm}(p^\pm)$. The index increases by 1 if $j-i =1$ and 2 if $j-i= 2$.
\end{rem}

The gradient trajectories we are interested in now live in $P_3 \times \R^3$ rather than $P_3$. To study trajectories, we equip $P_3 \times \R^3$ with split metrics $g_{i,j;3}' = g_{i,j;3} + g_0$ where $g_{i,j;3}$ is a metric on $P_3$  as in Definition \ref{EMS} and $g_0$ is the standard Riemannian metric on $\R^3$. Such a metric, if generic, facilitates comparison of gradient trajectories of the stabilized extended difference functions to those before stabilization.

\begin{lem}
If $g_{i,j;3}$ is a metric of the form in Definition \ref{EMS}; that is, if $g_{i,j;3} = g_w + g_Q$, then $g_{i,j;3}'=g_{w^\pm} + g_{Q'}$ for $g_{w^\pm} \in \mathcal{G}_{F^\pm}$ and $g_{Q'} \in \mathcal{G}_{Q'}$. Here, $\mathcal{G}_{F^\pm}$ and $\mathcal{G}_{Q'}$ are the metric sets defined in Definition \ref{EMS} for the stabilized generating family $F^\pm: M \times \R^{N+1}$ and corresponding quadratic form $Q':\R^{N+1} \to \R$ so that $\w{i}{j}{3}^\pm = w^\pm + Q'$.
\end{lem}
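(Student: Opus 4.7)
The plan is to decompose the Euclidean $g_0$ on $\R^3$ into pieces that pair naturally with $g_w$ and $g_Q$, and then to check that the resulting summands lie in the correct metric spaces.

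First I would write $g_0 = g_0' + g_0''$, where $g_0'$ is the Euclidean metric on the two $\R$ factors indexed by $(e_i', e_j')$ and $g_0''$ is the Euclidean metric on the $\R$ factor indexed by $e_k'$. After regrouping coordinates so that $(x, e_i, e_i', e_j, e_j')$ spans the domain of $w^\pm$ and $(e_k, e_k')$ spans the domain of $Q'$, the split metric rearranges as
$$g_{i,j;3}' \;=\; (g_w + g_0') \;+\; (g_Q + g_0''),$$
so the natural candidates are $g_{w^\pm} := g_w + g_0'$ and $g_{Q'} := g_Q + g_0''$.

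Next I would verify $g_{Q'} \in \mathcal{G}_{Q'}$. The new quadratic-like function $Q'$ on $\R^{N+1}$ obtained from $Q(e_k) \pm (e_k')^2$ has the unique nondegenerate critical point $(0_Q, 0)$, and the two defining conditions of $\mathcal{G}_{Q'}$ (Euclidean outside a compact set, and Euclidean on a neighborhood of the unique critical point) are inherited directly from the corresponding properties of $g_Q$ together with the fact that $g_0''$ is globally Euclidean.

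Finally I would verify the four conditions of Definition \ref{wEMS} for $g_{w^\pm}$. Conditions (1) and (2), which require a Euclidean Morse chart at every positive-valued critical point, hold because Lemma \ref{stabcritpts} identifies $\crit_+(w^\pm)$ with lifts of $\crit_+(w)$ whose primed coordinates vanish; the Morse chart for $w$ at $p$ therefore extends, via the standard quadratic in $(e_i', e_j')$ paired with the Euclidean metric, to a Morse chart for $w^\pm$ at the corresponding critical point. Condition (3) is immediate since $g_w$ and $g_0'$ are each Euclidean outside their respective compact sets. The substantive step is the Smale condition (4), which I would handle just as in Lemma \ref{lem-stab}: a direct analogue of Lemma \ref{gradsplit} shows that $\nabla_{g_{w^\pm}} w^\pm$ splits as the product of $\nabla_{g_w} w$ and the gradient of the nondegenerate quadratic $\pm (e_i')^2 \mp (e_j')^2$ on $\R^2$, so stable and unstable manifolds split as products. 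Transversality then reduces to the Smale property of $(w, g_w)$, assumed, together with the automatic Smale property for the quadratic, whose only critical point is the origin. I expect only this last verification to require genuine care; the rest is bookkeeping about product structures.
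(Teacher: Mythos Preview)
Your proposal is correct and follows essentially the same approach as the paper: decompose the extra Euclidean metric, observe that all conditions except Smale are immediate from the product structure, and then verify Smale by splitting the gradient flow and reducing to the known Smale property of $(w,g_w)$ together with the trivial Smale property of a nondegenerate quadratic with a single critical point. The paper's proof is simply a one-line version of yours, pointing to Proposition~\ref{Smaleprop} for the splitting argument where you spell out the details and cite Lemma~\ref{lem-stab} instead; the underlying technique is identical.
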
  

\begin{proof}
The only non-immediate condition to check is the Smale condition, but since $w^\pm (x, e_1, e_1', e_2, e_2') = w(x,e_1,e_2) \pm (e_1')^2 \mp (e_2')^2$, the techniques in the proof of Proposition \ref{Smaleprop} show that these metrics will ensure the Smale condition.
\end{proof}
%

\begin{rem}\label{Xstab}
With this choice of metrics $g_{i,j;3}$, the below relations between the unstable/stable manifolds hold, where $p_i^\pm \in C(F^\pm)$ denotes the corresponding critical point to $p_i \in \V{}$, see \ref{stabcritpts}. Note that we are abusing notation as promised in Remark \ref{abusenot}. The first diffeomorphism, as noted by Remark \ref{smothstr}, is due to the fact that the Morse trajectory spaces inherit their smooth structures from the unstable and stable manifolds.

\begin{align*}
\M^+_{1,2;3}(p_1^+, P_3 \times \R^3) \cong W^-_{p_1^+}(\w{1}{2}{3}^+) &\cong W^-_{p_1}(\w{1}{2}{3}) \times \R_{e_1'} \times \{ 0 \}_{e_2'} \times \R_{e_3'} \\
\M^+_{2,3;3}(p_2^+, P_3 \times \R^3) \cong W^-_{p_2^+}(\w{2}{3}{3}^+) &\cong W^-_{p_2}(\w{2}{3}{3}) \times \R_{e_1'} \times \R_{e_2'} \times \{ 0 \}_{e_3'} \\
\M^+_{1,3;3}(P_3 \times \R^3, p_0^+) \cong W^+_{p_0^+}(\w{1}{3}{3}^+) &\cong W^+_{p_0}(\w{1}{3}{3}) \times \{ 0 \}_{e_1'} \times \R_{e_2'} \times \R_{e_3'} \\
\M^-_{1,2;3}(p_1^-, P_3 \times \R^3) \cong W^-_{p_1^-}(\w{1}{2}{3}^-) &\cong W^-_{p_1}(\w{1}{2}{3}) \times \{0\}_{e_1'} \times \R_{e_2'} \times \R_{e_3'} \\
\M^-_{2,3;3}(p_2^-, P_3 \times \R^3) \cong W^-_{p_2^-}(\w{2}{3}{3}^-) &\cong W^-_{p_2}(\w{2}{3}{3}) \times \R_{e_1'} \times \{ 0 \}_{e_2'} \times \R_{e_3'} \\
\M^-_{1,3;3}(P_3 \times \R^3, p_0^-) \cong W^+_{p_0^-}(\w{1}{3}{3}^-) &\cong W^+_{p_0}(\w{1}{3}{3}) \times \R_{e_1'} \times \R_{e_2'} \times \{ 0 \}_{e_3'} 
\end{align*}
\end{rem}

Remark \ref{Xstab} tells us how the space $X = \Mfro{1}{2}{3}{p_1} \times \Mfro{2}{3}{3}{p_2} \times \Mto{1}{3}{3}{p_0}$ in which our moduli space of flow trees lives, compares to the space $$X^\pm = \M^\pm_{1,2;3}(p_1^\pm, P_3 \times \R^3) \times \M^\pm_{2,3;3}(p_2^\pm, P_3 \times \R^3) \times \M^\pm_{1,3;3}(P_3 \times \R^3, p_0^\pm)$$ obtaining using flows from $F^\pm$ and $g$. 

It remains to check transversality of perturbed evaluation at endpoints maps with the diagonal $\widehat{\Delta}^3 \cong \Delta^3 \times \Delta_{\R^3} \subset (P_3 \times \R^3)^3$ persists, and that the resulting preimage, the moduli space of flow trees, is diffeomorphic to the preimage from before. Given a perturbation $s=(s_1,s_2,s_3) \in S^3$, we claim that $(s,0) \in \widehat{S}^3$ achieves transversality, where $\widehat{S}$ is the perturbation ball in $P_3 \times \R^3$ as defined in Definition \ref{pert}.

\begin{equation} \label{eqn:stabpert}
         \xymatrix{
      &\Delta^3 \ar@{_{(}->}[d] \ar@{.>}[ld] \\
       X  \ar[d]_{}  \ar[r]^{E_s} &(P_3)^3 \ar[d] \\
       X^\pm \ar[r]^{E_{(s,0)}^\pm} &(P_3 \times \R^3)^3 \\
      &\widehat{\Delta}^3 \ar@{^{(}->}[u] \ar@{.>}[lu] }
    \end{equation}
    
\begin{lem}
Given $F: M \times \R^N \to \R$ and $F^\pm:M \times \R^N \times \R \to \R$, let $p_1, p_2, p_0 \in \V{}$ have corresponding critical points  $p_1^\pm, p_2^\pm, p_0^\pm \in C(F^\pm)$ (see Lemma \ref{stabcritpts}). Then if $E_s \pitchfork \Delta^3 $ then $\widehat{E}_{(s,0)} \pitchfork \widehat{\Delta}^3$.
\end{lem}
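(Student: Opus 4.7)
The plan is to exploit the product decomposition that the split metrics $g'_{i,j;3} = g_{i,j;3} + g_0$ induce. By Remark \ref{Xstab}, each factor of $X^\pm$ factors as the corresponding factor of $X$ times a two-dimensional coordinate plane $W^\mp_0(Q^\pm_{i,j;3}) \subset \R^3$ in the primed coordinates, where $Q^\pm_{i,j;3}$ is the nondegenerate quadratic form in $(e_1', e_2', e_3')$ appearing in the stabilization of $\w{i}{j}{3}$. Correspondingly $(P_3 \times \R^3)^3 = (P_3)^3 \times (\R^3)^3$ and $\widehat{\Delta}^3 = \Delta^3 \times \Delta_{\R^3}$. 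Since $\nabla \w{i}{j}{3}^\pm$ splits by (the argument of) Lemma \ref{gradsplit} and the $\R^3$-component of the lifted perturbation $(s,0)$ vanishes, under these identifications
\[
\widehat{E}^\pm_{(s,0)} \;=\; E_s \times L^\pm,
\]
where $L^\pm \colon Y^\pm \hookrightarrow (\R^3)^3$ is the inclusion of the six-dimensional subspace
\[
Y^\pm := W^\mp_0(Q^\pm_{1,2;3}) \times W^\mp_0(Q^\pm_{2,3;3}) \times W^\mp_0(Q^\pm_{1,3;3}),
\]
with the understanding that endpoint evaluation along a linear gradient flow on $\R^3$ is just the identity on the ambient unstable/stable manifold.

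Because target and submanifold both split as products, $\widehat{E}^\pm_{(s,0)} \pitchfork \widehat{\Delta}^3$ is equivalent to the conjunction of $E_s \pitchfork \Delta^3$ (the hypothesis) and $L^\pm \pitchfork \Delta_{\R^3}$. The whole proof therefore reduces to the linear algebra check that $Y^\pm + \Delta_{\R^3} = (\R^3)^3$; since $\dim Y^\pm = 6$ and $\dim \Delta_{\R^3} = 3$ inside the nine-dimensional target, this is in turn equivalent to the trivial intersection $Y^\pm \cap \Delta_{\R^3} = \{0\}$.

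For the $+$ case, reading the three factors off Remark \ref{Xstab} gives
\[
Y^+ = \bigl(\R_{e_1'} \times \{0\} \times \R_{e_3'}\bigr) \times \bigl(\R_{e_1'} \times \R_{e_2'} \times \{0\}\bigr) \times \bigl(\{0\} \times \R_{e_2'} \times \R_{e_3'}\bigr),
\]
so a vector $((a,b,c),(a,b,c),(a,b,c)) \in \Delta_{\R^3}$ lies in $Y^+$ only if $b=0$ (first slot), $c=0$ (second slot), and $a=0$ (third slot); hence $Y^+ \cap \Delta_{\R^3} = \{0\}$. The $-$ case is identical after permuting the roles of $e_1', e_2', e_3'$ dictated by the corresponding lines of Remark \ref{Xstab}. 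The only real content of the proof is this finite-dimensional check, and the main thing to verify is that the three coordinate zeros appearing in Remark \ref{Xstab} exhaust all three coordinate directions of $\R^3$. This is exactly what makes the naive lift $(s,0)$ of the transverse perturbation $s$ automatically transverse after stabilization, and it is the split-metric choice of Definition \ref{EMS} that decouples $P_3$ from $\R^3$ and reduces the problem to two independent transversality statements.
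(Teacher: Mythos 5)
Your proof is correct and follows essentially the same route as the paper: using the split metric and zero perturbation in the primed directions to reduce transversality to the linear statement that $W^-_0(Q^\pm_{1,2;3}) \times W^-_0(Q^\pm_{2,3;3}) \times W^+_0(Q^\pm_{1,3;3})$ meets $\Delta_{\R^3}$ only at the origin inside $(\R^3)^3$, which together with the dimension count $6+3=9$ gives spanning. The only quibble is notational: writing $W^{\mp}_0(Q^{\pm}_{i,j;3})$ uniformly for the three factors of $Y^\pm$ is misleading (the first two factors are unstable manifolds and the third is a stable manifold regardless of the sign of the stabilization), but your explicit coordinate planes are the correct ones, so the argument stands.
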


\begin{proof}
Our choice of metric and perturbation reduces this question to a elementary differential topology one. We wish to show the following in $\R^9$:
$$\left(W^-_0(Q_{1,2;3}^\pm) \times W^-_0(Q_{2,3;3}^\pm) \times W^+_0(Q_{1,3;3}^\pm) \right) \pitchfork \Delta_{\R^3}^3.$$
Remark \ref{Xstab} tells us what these stable and unstable manifolds are. Since the product of these manifolds in both the $+$ and $-$ case is 6-dimensional and $\Delta_{\R^3}^3$ is 3-dimensional, the result follows because 
$$\left(W^-_0(Q_{1,2;3}^\pm) \times W^-_0(Q_{2,3;3}^\pm) \times W^+_0(Q_{1,3;3}^\pm) \right) \cap \Delta_{\R^3}^3 = \{0\}, $$ so their tangent spaces must span $\R^9$.
\end{proof}
    
With our choice of split metric and no extra perturbation, the following lemma shows that the moduli space of flow trees that define out product splits into a space of trees defined through the original generating family $F$ and ``constant" trees, that is, three constant trajectories at $0 \in \R^3$.

\begin{lem}
Given $F, F^\pm, p_i \in \V{},$ and $p_i^\pm \in C(F^\pm)$, $\widehat{\M}(p_1^\pm, p_2^\pm; p_0^\pm \mid (s,0))$ is diffeomorphic to  $\modsp.$
\end{lem}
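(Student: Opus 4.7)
The plan is to exploit the splitting of the gradient flow induced by the split metric $g_{i,j;3}' = g_{i,j;3} + g_0$, together with the fact that the perturbation $(s,0)$ is trivial in the new $\R^3$ factor. By Lemma~\ref{gradsplit} applied to the decomposition $\w{i}{j}{3}^{\pm} = \w{i}{j}{3} + Q_{i,j;3}^{\pm}$, every (half-infinite) gradient trajectory of $\w{i}{j}{3}^{\pm}$ for the split metric splits as a pair $(\gamma_{i,j;3},\gamma'_{i,j;3})$, where $\gamma_{i,j;3}$ is a trajectory of $\nabla_{g_{i,j;3}}\w{i}{j}{3}$ on $P_3$ and $\gamma'_{i,j;3}$ is a trajectory of $\nabla_{g_0}Q_{i,j;3}^{\pm}$ on $\R^3$. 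Under the correspondence $p_i \leftrightarrow p_i^{\pm}$, the infinite end of $\gamma'_{i,j;3}$ must limit to $0\in\R^3$.

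Next I would decouple the defining condition for $\widehat{\M}(p_1^\pm,p_2^\pm;p_0^\pm\mid(s,0))$. The diagonal $\widehat{\Delta}^3\subset(P_3\times\R^3)^3$ factors as $\Delta^3\times\Delta_{\R^3}^3$, and because the perturbation in the $\R^3$-direction is zero, the condition $E_{(s,0)}^{\pm}(\Gamma^{\pm})\in\widehat{\Delta}^3$ splits into two independent conditions: the $P_3$-components $(\gamma_1,\gamma_2,\gamma_3)$ satisfy $E_s(\gamma_1,\gamma_2,\gamma_3)\in\Delta^3$ (i.e.\ give an element of $\modsp$), and the $\R^3$-components $(\gamma'_1,\gamma'_2,\gamma'_3)$ satisfy $(\gamma'_1(0),\gamma'_2(0),\gamma'_3(0))\in\Delta_{\R^3}^3$. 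The first condition identifies the $P_3$-part of an element of $\widehat{\M}$ with an element of $\modsp$.

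For the second condition, using the coordinate descriptions of the unstable/stable manifolds given in Remark~\ref{Xstab}, the $\R^3$-parts of $\gamma'_1, \gamma'_2, \gamma'_3$ are constrained to lie in explicit coordinate subspaces of $\R^3$. A direct check (case $+$ and case $-$ separately) shows that the intersection of the product of these three subspaces with $\Delta_{\R^3}^3$ consists of the single point $0$: in each case the three linear conditions force every coordinate of the common vector $v\in\R^3$ to vanish. Hence each $\gamma'_{i,j;3}$ is forced to be the constant trajectory at $0\in\R^3$, and the triple of $\R^3$-components of $\Gamma^{\pm}$ is uniquely determined by $(p_1^{\pm},p_2^{\pm},p_0^{\pm})$.

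Combining the two decoupled conditions, projection onto the $P_3$-components defines a bijection $\widehat{\M}(p_1^{\pm},p_2^{\pm};p_0^{\pm}\mid(s,0))\to\modsp$. Since both spaces are cut out transversally as smooth manifolds (using the preceding lemma to ensure $\widehat{E}_{(s,0)}\pitchfork\widehat{\Delta}^3$), and the map is induced by the smooth projection $P_3\times\R^3\to P_3$, this bijection is a diffeomorphism. The main (small) obstacle is simply the bookkeeping with the different coordinate subspaces in the $+$ and $-$ cases, and verifying that the transverse intersection in $\R^3$ is exactly $\{0\}$; everything else is a direct consequence of the splitting of $\w{i}{j}{3}^{\pm}$ and the choice of unperturbed $\R^3$-component in the perturbation parameter.
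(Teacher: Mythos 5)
Your proposal is correct and follows essentially the same route as the paper: split the preimage $\widehat{E}_{(s,0)}^{-1}(\widehat{\Delta}^3) \cong E_s^{-1}(\Delta^3)\times E_0^{-1}(\Delta_{\R^3})$ using the split metric and the unperturbed $\R^3$-factor, then use the coordinate subspaces from Remark~\ref{Xstab} to see that the $\R^3$-components must all be the constant trajectory at $0$. Your linear-algebra phrasing (three coordinate conditions force the common endpoint to vanish) is just a restatement of the paper's axis-intersection argument, so the two proofs coincide in substance.
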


\begin{proof}
As $\widehat{\M}(p_1^\pm, p_2^\pm; p_0^\pm \mid (s,0)) = \widehat{E}_{(s,0)}^{-1}(\widehat{\Delta}^3)$ by definition, our setup shows that we may split this preimage $$\widehat{E}_{(s,0)}^{-1}(\widehat{\Delta}^3) \cong E_s^{-1}(\Delta^3) \times E_0^{-1}(\Delta_{\R^3}). $$ As $E_s^{-1}(\Delta^3) = \modsp$, we consider $E_0^{-1}(\Delta_{\R^3})$. This space consists of triples of trajectories $\{\gamma_1, \gamma_2, \gamma_3\}$ with $\gamma_1, \gamma_2: (-\infty, 0] \to \R^3$ and $\gamma_3:[0, \infty) \to \R^3$. The trajectory $\gamma_1$ flows from $0 \in \R^3$ and follows $\nabla_{g_0} Q_{1,2;3}^\pm$, and so, by Remark \ref{Xstab}, is contained in the $(e_1', e_3')$-plane in the $+$ case and the $(e_2', e_3')$-plane in the $-$ case. Similarly, $\gamma_2$ flows from $0 \in \R^3$ and follows $\nabla_{g_0} Q_{2,3;3}^\pm$, and so is contained in the $(e_1', e_2')$-plane in the $+$ case and the $(e_1', e_3')$-plane in the $-$ case. Thus, in the $+$ case, $\gamma_1$ and $\gamma_2$ intersect along the $e_1'$-axis; in the $-$ case, they intersect along the $e_3'$-axis. In either case, $\gamma_3$ intersects the intersection of $\gamma_1$ and $\gamma_2$ and flows to $0 \in \R^3$ along $\nabla_{g_0} Q_{1,3;3}^\pm$. In both the $+$ and $-$ case, however, $\gamma_3$ trajectory will only intersect the $e_1'$-axis ($e_3'$-axis) at $0$, and thus has to be the constant trajectory. This implies that both $\gamma_1$ and $\gamma_2$ never flowed off of the critical point 0, and are also constant trajectories.
\end{proof}

\begin{cor}\label{cor:stab}
If $F:M\times \R^N \rightarrow \R$ is altered by a positive or negative stabilization resulting in $\widehat{F}:M\times \R^N \times \R \rightarrow \R$ then the following diagram commutes:
\begin{equation} \label{eqn:stabilization}
      \xymatrix{
       GH^*(F)  \otimes GH^*(F) \ar[d]_{\cong}  \ar[r]^{\quad  \quad \mu_2} &GH^*(F) \ar[d]^{\cong} \\
       GH^*(F^\pm) \otimes GH^*(F^\pm) \ar[r]^{\quad \quad \mu_2^\pm} &GH^*(F^\pm).
      }
    \end{equation}
\end{cor}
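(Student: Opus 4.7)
The plan is to assemble the lemmas of the Stabilization subsection into a cochain-level verification of commutativity and then descend to cohomology. The vertical isomorphism $\Psi: GH^*(F) \to GH^*(F^\pm)$ is induced by the grading-preserving bijection $p \leftrightarrow p^\pm$ of Lemma \ref{stabcritpts} (this is the chain-level map underlying Lemma \ref{lem-stab}). To show that $\Psi$ intertwines $m_2$ and $m_2^\pm$ on cochains, I would fix metrics $g_{i,j;3}$ and a generic perturbation $s \in S^3$ defining $m_2$, and then use the split metrics $g_{i,j;3}' = g_{i,j;3} + g_0$ together with the stabilized perturbation $(s, 0) \in \widehat{S}^3$ to define $m_2^\pm$.

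With these choices, the final lemma of the subsection, asserting $\widehat{\M}(p_1^\pm, p_2^\pm; p_0^\pm \mid (s, 0)) \cong \modsp$, provides a diffeomorphism of moduli spaces, so the $\Z_2$-counts agree when these spaces are zero-dimensional. Combined with Definition \ref{productmap}, this yields the cochain-level identity
\[
m_2^\pm(p_1^\pm \otimes p_2^\pm) = \Psi\bigl(m_2(p_1 \otimes p_2)\bigr),
\]
which is exactly the commutativity of (\ref{eqn:stabilization}) at the cochain level; passing to cohomology then gives the corollary. Note that the grading on $\V{}$ was set up (Definition \ref{gradedvs}) precisely so that $|p| = |p^\pm|$, so degrees are preserved and nothing needs to be tracked beyond the count of isolated trees.

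Before executing the above, I would invoke Corollary \ref{cor:fpd} to precompose with a fiber-preserving diffeomorphism reducing the quadratic-like stabilizing terms of the extended difference functions to honest quadratic forms, so that each $\w{i}{j}{3}^\pm$ is a genuine quadratic stabilization of $\w{i}{j}{3}$ and the gradient flows split cleanly as in Lemma \ref{gradsplit}. The main technical obstacle is checking that the perturbation $(s,0)$ is admissible for the stabilized setup, i.e., that $\widehat{E}_{(s,0)} \pitchfork \widehat{\Delta}^3$; the transversality lemma of the subsection reduces this to the elementary observation that
\[
\bigl(W^-_0(Q^\pm_{1,2;3}) \times W^-_0(Q^\pm_{2,3;3}) \times W^+_0(Q^\pm_{1,3;3})\bigr) \cap \Delta_{\R^3}^3 = \{0\}
\]
in $\R^9$, a dimension count that ensures no additional fiber perturbation beyond $s$ is needed. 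Once admissibility is verified, the diffeomorphism of moduli spaces together with the fact that $\Psi$ is a cochain isomorphism (Lemma \ref{lem-stab}) finishes the proof.
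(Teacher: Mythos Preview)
Your proposal is correct and follows essentially the same approach as the paper: the corollary is stated there without separate proof precisely because it is meant to follow immediately from the chain of lemmas in the Stabilization subsection (reduce to honest quadratics via Corollary~\ref{cor:fpd}, use the split metric and the perturbation $(s,0)$, verify transversality via the $\R^9$ intersection, and then apply the moduli-space diffeomorphism to equate $\Z_2$-counts), which is exactly the assembly you describe.
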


\section{Invariance under Legendrian isotopy}\label{sec:inviso}

In this section, we study the product as the underlying Legendrian $\Lambda$ undergoes a Legendrian isotopy. In particular, suppose we have a Legendrian isotopy $\Lambda^t$ with $t \in [0,1]$, and suppose $\Lambda^0$ has a generating family. From the Persistence of Legendrian Generating Families (see Proposition \ref{prop:leg-persist}), the isotopy lifts to a smooth path of generating families $F^t$ for $\Lambda^t$. In Section \ref{sec:gh-legendrian}, we constructed a chain map that induces an isomorphism between $GH^*(F^0) \to GH^*(F^1)$ (Corollary \ref{cor:end-naturality}).


Given $F^0$ and the resulting $F^1$ guaranteed by Proposition \ref{prop:leg-persist}, we may assume by stabilization that both are functions on $M \times \R^N$. We wish to compare the product from $F^0$ with the product from $F^1$. In particular, we wish to show that the following diagram commutes:
\begin{equation}\label{isotopygoal}
      \xymatrix{
       GH^*(F^0) \otimes GH^*(F^0) \ar[d]_{\cong}  \ar[r]^{ \quad\quad \mu_2^0} &GH^*(F^0) \ar[d]^{\cong} \\
       GH^*(F^1) \otimes GH^*(F^1) \ar[r]^{\quad \quad \mu_2^1} &GH^*(F^1).
      }
    \end{equation}
    
While we know there exists isomorphisms $GH^*(F^0) \to GH^*(F^1)$, for the vertical isomorphisms in the diagram in \ref{isotopygoal}, we construct maps that will be compatible with the product. To do this, we slightly alter the setup of the continuation map in Section \ref{sec:gh-legendrian} to produce three continuation maps using paths of extended difference functions. We then extend this idea to form a moduli space of ``continuation flow trees" on $P_3 \times I$ and define a map $K$ counting isolated spaces of such trees. Studying the compactification of a 1-dimensional space of the trees shows that the map $K$ defines a chain homotopy that induces the commutative diagram \ref{isotopygoal} on cohomology.
    
The first subsection of this section deals with the vertical isomorphisms in the above diagrams, while the second constructs ``continuation trees" that will define a chain homotopy that implies the commutativity of \ref{isotopygoal}.
    
\subsection{Continuation isomorphisms on $GH^*(F)$}

Given the path of linear-at-infinity generating families from $F^0: M \times \R^N \to \R$ to $F^1: M \times \R^N \to \R$, we wish to compare the product at time $t=0$ to the one at $t=1$. For $F_0$ and $F_1$, we constructed continuation maps from the path of difference functions $w^t: M \times \R^N \times \R^N \to \R$ such that $w^t(x, e_1, e_2) = F^t(x,e_1) - F^t(x,e_2)$.

To get continuation isomorphisms that are compatible with the product, we will constuct them on the paths of extended difference functions for $t \in [0,1]$, denoted $\w{i}{j}{3}^t: M \times \R^N \times \R^N \times \R^N \to \R$ defined as usual by:
\begin{align*}
& \w{1}{2}{3}^t(x,e_1,e_2,e_3)= F^t(x,e_1) - F^t(x, e_2) + Q^t(e_3) \\
& \w{2}{3}{3}^t(x,e_1,e_2,e_3)= F^t(x,e_2) - F^t(x, e_3) + Q^t(e_1) \\
& \w{1}{3}{3}^t(x,e_1,e_2,e_3)= F^t(x,e_1) - F^t(x, e_3) + Q^t(e_2)
\end{align*}
For each $\w{i}{j}{3}^t$, there is the corresponding non-linear support compact set $K^t$, which will vary smoothly with $t$.

Given $F^0$ and the resulting $F^1$, construct the resulting extended difference functions as above and pick metrics $g_{i,j;3}^0, g_{i,j;3}^1$ as in Definition \ref{EMS}. Then let $\Gamma_{i,j;3} = \{(w^t_{i,j;3}, g^t_{i,j;3}) \mid t \in \left[ 0,1 \right] \}$ be a path of the extended difference functions and metrics on $P_3$ that are standard  outside $K^t$ from $(\w{i}{j}{3}^0, g_{i,j;3}^0)$ to $(\w{i}{j}{3}^1, g_{i,j;3}^1)$.

For each these three paths we have a continuation map $\Phi_{i,j;3}: C^*(F^0) \to C^*(F^1)$ defined by counting isolated flow lines of the vector field $\nabla_{G_{i,j;3}}W_{i,j;3}$ on $(M \times \R^{3N}) \times I$ with 
\begin{align}\label{nabGWij}
\begin{split}
W_{i,j;3}(p,t) &= \w{i}{j}{3}^t(p) + \epsilon \left((1/2)t^2 - (1/4)t^4\right) \\
(G_{i,j;3})_{(p,t)} &= (g_{i,j;3}^t)_p + dt^2,
\end{split}
\end{align}
for $\epsilon > 0$ such that $\dfrac{\epsilon}{4} < \rho$, where $\rho$ is the least positive critical value of $w$.

As done in detail in Section \ref{sec:gh-legendrian}, these maps induce isomorphisms which we will denote by $\Phi^*_{i,j;3}$, with
$$\Phi^*_{i,j;3}: GH^*(F^0) \to GH^*(F^1),$$
and the arguments in Proposition \ref{metricinvrgh} show that this map does not depend on the path $F^t$ up to homotopy class.

\subsection{Continuation flow trees}
To get the commutative diagram in \ref{isotopygoal}, we construct a chain homotopy by defining a moduli space of ``continuation flow trees." The construction will be similar to that of $\modsp$. Now, our trees will live in $P_3 \times I$ rather than $P_3$ and we will require that the trees span $I$, i.e., flow along trajectories out of two critical points at $t=0$ and along a trajectory that limits to a critical point at $t=1$. We will denote the moduli space of continuation flow trees by $\M_I(p_1,p_2;p_0|s^t)$ and describe its construction in the following paragraphs.

Each branch in a continuation tree will follow one of the vector fields $V_{i,j;3} = \nabla_{G_{i,j;3}}W_{i,j;3}$ defined in the previous subsection in \ref{nabGWij}. Recall that the path of metrics $g_{i,j;3}^t$ used to define $G_{i,j;3}$ was chosen to be admissible so that the unstable and stable manifolds from each $V_{i,j;3}$ intersect transversely. This does not guarantee the transverse intersection in the flow trees. To fix this and prove that the product does not depend on the perturbation used to achieve transversality, we add the data of a path of perturbation vectors into the construction of the continuation trees.

The perturbation balls $S^0$ for $F^0$ and $S^1$ for $F^1$ might be of different sizes; see Remark \ref{pertdelta}. There is, however, a smooth path $S^t$ of perturbation balls connecting them. To  construct a path $s^t = (s_1^t, s_2^t, s_3^t) \in (S^t)^3$, first pick endpoints $s^0 \in (S^0)^3$ so that $\M_{F^0}(p_1,p_2;p_0'|s^0) \neq \emptyset$ is a smooth manifold for any choice of $p_0' \in \crit_+(w)$ and $s^1 \in (S^1)^3$ so that $\M_{F^1}(p_1',p_2';p_0|s^1) \neq \emptyset$ is a smooth manifold for any choices of $p_1', p_2' \in \crit_+(w)$. By Theorem \ref{cornertrees}, almost every choice of $s^0$ and $s^1$ will suffice for a fixed triplet of critical points so almost every choice still suffices because $\crit_+(w)$ is a finite set. Using these endpoints, construct a smooth path $s^t = (s_1^t, s_2^t, s_3^t) \in (S^t)^3$. This path may be perturbed while keeping the admissible endpoint fixed to achieve transversality as described in the following after we set up perturbed continuation evaluation maps.

To get a manifold structure and compactification results on $\mathcal{M}_I(p_1,p_2; p_0|s^t)$ we need to transversely cut it out of Morse trajectory spaces. As before in Definition \ref{halfinftraj}, we have half-infinite Morse trajectory spaces, for $(p,t) \in C(F^t) \times \{t\}$ for $t=0$ or $t=1$.

$$\M_{i,j;3}((p, t), P_3 \times I) = \{ \gamma: (-\infty, 0] \rightarrow P_3 \times I \mid \dot{\gamma} = V_{i,j;3}, \lim_{s \to -\infty} \gamma(s) = (p,t)\}  \text{ and} $$
$$\M_{i,j;3}(P_3 \times I, (p,t))= \{ \gamma: [0, \infty) \rightarrow P_3 \times I \mid \dot{\gamma} = V_{i,j;3}, \lim_{s \to \infty} \gamma(s) = (p,t) \},$$ 
where we abuse notation and use $p$ to denote a critical point in $\crit_+(w^t)$ and its bijective image in $\crit_+(\w{i}{j}{3}^t)$. Given $p_1, p_2 \in \crit_+(w^0)$ and $p_0 \in \crit_+(w^1),$ let
$$X_I = \M_{1,2;3}((p_1,0), P_3 \times I) \times \M_{2,3;3}((p_2,0), P_3 \times I) \times \M_{1,3;3}((P_3 \times I, (p_0,1)). $$ This space has a smooth structure induced by a diffeomorphism with $$W^-_{(p_1,0)}(V_{1,2;3}) \times W^-_{(p_2,0)}(V_{2,3;3}) \times W^+_{(p_0,1)}(V_{1,3;3}).$$

Given a path $\{s^t\}$ through $(S^t)^3$ as described above, we have analogous perturbed evaluation maps that we will use to construct a map $E_I$ as in Definition \ref{Eijbroken}. Given a half-infinite trajectory $\gamma$ in one of the above spaces, the evaluation maps $\ev{i}{j}{3}{\pm}(\gamma)$  give a point $\gamma(0)= (\gamma(0)|_{P_3},\gamma(0)|_I) \in P_3 \times I$ and we will perturb $\gamma(0)|_{P_3}$ by $s^t$ in $P_3 \times \{t\}$ for $t = {\gamma(0)|_I}$. That is, we have the following three maps, with $\pi$ representing the submersion or the identity to remain consistent with Definition \ref{Eijbroken}.
\begin{align*}
E_{1,2;3}: \M_{1,2;3}((p_1, 0), P_3 \times I) &\to P_3 \times I \\
E_{2,3;3}: \M_{2,3;3}((p_2,0), P_3 \times I) &\to P_3 \times I \\
E_{1,3;3}: \M_{1,3;3}((P_3 \times I, (p_0,1)) &\to P_3 \times I \\
\gamma &\mapsto \pi \left( \gamma(0)|_{P_3} + s^{\gamma(0)|_I} \right)
\end{align*}
Thus, we have the map $E:X_I \to (P_3 \times I)^3$ defined by 
$$E(\gamma_1, \gamma_2, \gamma_3) = (E_{1,2;3}(\gamma_1), E_{2,3;3}(\gamma_2), E_{1,3;3}(\gamma_3)) . $$

\begin{defn}
The \textit{moduli space of continuation trees} is $$\M_I(p_1,p_2;p_0|s^t) \coloneqq E^{-1}(\Delta(P_3 \times I)^3).$$

We may express this moduli space as the following set:
$$\mathcal{M}_I(p_1,p_2; p_0|s^t) = 
  \left\lbrace 	(\gamma_1,\gamma_2,\gamma_3)  \mathrel{}\middle|  \mathrel{} \begin{aligned} 
  & \gamma_1: \left( -\infty, 0 \right] \rightarrow P_3 \times I, \\
  &  \gamma_2: \left( -\infty, 0 \right] \rightarrow P_3 \times I, \\ 
  & \gamma_3: \left[0, \infty \right) \rightarrow  P_3 \times I, \\ 
  & \dfrac{d \gamma_1}{ds} = V_{1,2;3}, \dfrac{d \gamma_2}{ds} = V_{2,3;3}, \dfrac{d \gamma_3}{ds} = V_{1,3;3}, \\
  & E_{1,2;3}(\gamma_1) = E_{2,3;3}(\gamma_2) = E_{1,3;3}(\gamma_3)\\
  & \lim_{s \to -\infty} \gamma_1(s) = (p_1,0), \lim_{s \to -\infty} \gamma_2(s) = (p_2,0),\\
  & \lim_{s \to \infty} \gamma_3(s) = (p_0,1) \end{aligned} \right\rbrace .$$
\end{defn}


\begin{lem}
There is a perturbation of the path $\{s^t\}$ so that $E \pitchfork \Delta(P_3 \times I)^3$. Then $\M_I(p_1,p_2;p_0|s^t)$ is a manifold of dimension $|p_0| - |p_1| - |p_2| + 1$.
\end{lem}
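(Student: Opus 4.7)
The plan is to mirror the strategy of Theorem \ref{cornertrees}: apply the parametric transversality result (Theorem \ref{transcorners}) to a universal evaluation map over the space of path perturbations, then extract the manifold structure from the preimage theorem (Theorem \ref{preimage}). Concretely, I would introduce a parameter space $\mathcal{P}$ of smooth paths $\{s^t\} \subset (S^t)^3$ with fixed admissible endpoints $s^0, s^1$ (represented by a sufficiently rich finite-dimensional family, or a Banach manifold treated via Sard-Smale) and consider the universal evaluation $\mathbf{E}: X_I \times \mathcal{P} \to (P_3 \times I)^3$. If $\mathbf{E}$ is a submersion along $\mathbf{E}^{-1}(\Delta(P_3 \times I)^3)$, then Theorem \ref{transcorners} yields a residual set of $\{s^t\}$ for which $E$ itself is transverse to $\Delta(P_3 \times I)^3$.

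To verify submersivity of $\mathbf{E}$, fix a triple $\hat{\gamma} = (\gamma_1, \gamma_2, \gamma_3)$ and a path $\{s^t\}$ with $\mathbf{E}(\hat{\gamma}, \{s^t\}) \in \Delta(P_3 \times I)^3$, so the three finite endpoints share a common $I$-coordinate $t^* \in (0, 1)$. Varying the three components $s_i^t$ independently within a small $t$-neighborhood of $t^*$ translates each $E_{i,j;3}(\gamma_i)$ freely in the $P_3$ direction while leaving the $I$-coordinate fixed; this is exactly the argument used in Theorem \ref{cornertrees} and gives surjectivity of $d\mathbf{E}$ onto $T(P_3)^3 \subset T(P_3 \times I)^3$. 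For the complementary $I$ directions, no perturbation acts, so I rely on the intrinsic flow: at $t^* \in (0,1)$, the $t$-component of each vector field $V_{i,j;3}$ is nonzero because $\partial_t \bigl( \epsilon((1/2)t^2 - (1/4)t^4) \bigr) = \epsilon(t - t^3)$ is strictly positive on $(0,1)$. The flow direction therefore contributes, in each of the three factors of $X_I$, a tangent vector with nonzero $I$-component, and combining these with the diagonal $T\Delta$ spans all three $I$-coordinates of the target.

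Once transversality is established, Theorem \ref{preimage} shows that $\M_I(p_1, p_2; p_0 \mid s^t)$ is a smooth manifold of codimension $\dim((P_3 \times I)^3) - \dim(\Delta(P_3 \times I)^3) = 2(n + 3N + 1)$ in $X_I$. The dimension is then a direct count using Equation (\ref{eqn:index}) and the observation that the critical points $(p_1, 0)$ and $(p_2, 0)$ contribute $t$-index $0$ (a local minimum of the $t$-term in $W_{i,j;3}$) while $(p_0, 1)$ contributes $t$-index $1$:
\begin{align*}
\dim X_I &= \bigl( (n+3N+1) - (|p_1| + N) \bigr) + \bigl( (n+3N+1) - (|p_2| + N) \bigr) + (|p_0| + 2N + 1) \\
&= 2n + 6N + 3 + |p_0| - |p_1| - |p_2|,
\end{align*}
so $\dim \M_I(p_1,p_2;p_0 \mid s^t) = 2n + 6N + 3 + |p_0| - |p_1| - |p_2| - 2(n + 3N + 1) = |p_0| - |p_1| - |p_2| + 1$, as claimed.

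The main obstacle is the asymmetric role of the perturbation and the flow: perturbations $s^t$ only move $E$ in the $P_3$ factor, so transversality in the $I$ direction must come for free from the dynamics. The resolution is the simple observation that the ``continuation bump'' $\epsilon((1/2)t^2 - (1/4)t^4)$ has strictly positive derivative on $(0, 1)$, forcing any relevant trajectory to pass through $t^* \in (0,1)$ with nonzero $t$-velocity. This guarantees the needed $I$-transversality automatically, and the remainder of the argument is an exact imitation of Theorem \ref{cornertrees}, making ostensibly the only delicate point a bookkeeping check that the choice of admissible endpoints $s^0, s^1$ (guaranteed by Theorem \ref{cornertrees} applied at $t=0$ and $t=1$) is compatible with a generic admissible interior path.
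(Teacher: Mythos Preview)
Your proposal is correct and follows the same overall strategy as the paper: parametric transversality over the space of path perturbations, followed by the dimension count via Equation~(\ref{eqn:index}). Your dimension calculation agrees line-for-line with the paper's.

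Where you differ is in the level of detail. The paper's proof is essentially a one-sentence assertion---``the freedom given by perturbing the path $\{s^t\}$ together with the larger class of metrics used to define $V_{i,j;3}$ give us room to achieve transversality''---followed immediately by the dimension computation. You, by contrast, isolate and resolve the one genuinely new issue that does not appear in Theorem~\ref{cornertrees}: the perturbation $s^t$ only moves the evaluation in the $P_3$ factor, so transversality in the $I$ direction must come from elsewhere. Your observation that $\partial_t\bigl(\epsilon((1/2)t^2-(1/4)t^4)\bigr)=\epsilon(t-t^3)>0$ on $(0,1)$, so the flow direction in each factor contributes a nonzero $I$-component, is exactly the missing ingredient and makes the argument self-contained. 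The paper instead gestures at the freedom in the interior metrics $g^t_{i,j;3}$ (which need not lie in $\mathcal{G}_{F^t}$) as an additional source of genericity, without spelling out how it is used. Your route is arguably cleaner, since it shows the path perturbation alone suffices once the continuation bump is in place.
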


\begin{proof}
The freedom given by perturbing the path $\{s^t\}$ together with the larger class of metrics used to define $V_{i,j;3}$ give us room to achieve transversality. We calculate the dimension:
\begin{align*}
&\text{dim}(E^{-1}(\Delta(P_3 \times I)^3)) \\
 &= \text{dim}(X_I) - \text{codim}((\Delta(P_3 \times I)^3) \\ &= \text{dim}(W^-_{(p_1,0)}(V_{1,2;3})) + \text{dim}(W^-_{(p_2,0)}(V_{2,3;3})) + \text{dim}(W^+_{(p_0,1)}(V_{1,3;3})) - 2(n+3N+1) \\
&=(n+ 3N +1) - \text{ind}_{V_{1,2;3}}((p_1,0)) + (n+ 3N +1) - \text{ind}_{V_{2,3;3}}((p_2,0)) \\ & \quad \quad \quad + \text{ind}_{V_{1,3;3}}((p_0,1)) - 2(n+3N+1)\\
&= \text{ind}_{\w{1}{3}{3}^1}(p_0) + 1 - \text{ind}_{\w{1}{2}{3}^0}(p_1) - \text{ind}_{\w{2}{3}{3}^0}(p_2) \\
&= (|p_0| + 2N) +1 - (|p_1| + N) - (|p_2| +N) \\
&= |p_0| - |p_1| - |p_2| + 1. \qedhere
\end{align*}
\end{proof}


As in previous arguments in this paper, we will need to understand the boundary of the compactification of a 1-dimensional $\mathcal{M}_I(p_1,p_2; p_0|s^t)$. Rather than defining a larger manifold with corners structure as in Section \ref{sec:moduli}, we will use similar arguments to classify possible limits of unbroken continuation trees.

To apply similar arguments, we need bounds on the continuation trees as in Lemmas \ref{lem:cpct-set} and \ref{awayfrom0}. The compact non-linear support set from each generating family $F^t$ gives a path of compact non-linear support sets $K^t$ as in Definition \ref{defK}. A similar argument to Lemma \ref{lem:cpct-set} shows that, for all $\Gamma \subset \mathcal{M}_I(p_1,p_2; p_0|\{s^t\})$, $\text{Im}(\Gamma) \subset \bigcup_{t \in I} (K^t \times \{t\}) \subset P_3 \times I$. Similarly, given $\{\rho^t\}$, the path of smallest positive critical values of $w^t$, we may bound the ```midpoint" of any tree $\Gamma$, which occurs at a specific slice $P_3 \times \{t\}$ away from the critical submanifold of $w_{1,3;3}^t$ as in Lemma \ref{awayfrom0}.

\begin{prop}\label{compconttrees}
Given $p_1, p_2 \in \crit_+(w^0)$ and $p_0 \in \crit_+(w^1)$ with $|p_0| = |p_1| + |p_2|$, if $\{s_t\}$ is a path so that $\mathcal{M}_I(p_1,p_2; p_0|s^t)$ is a 1-manifold, then it may be compactified to a 1-manifold $\overline{\mathcal{M}}_I(p_1,p_2; p_0|s^t)$ with boundary

\begin{align*}
& \partial \overline{\mathcal{M}}_I(p_1,p_2; p_0|s^t) = \\ &\bigcup_{p_1'} \Mtofro{1}{2}{(p_1,0)}{(p_1',0)} \times \mathcal{M}_I(p_1',p_2; p_0|s^t) \; \cup \\
& \bigcup_{p_2'} \Mtofro{2}{3}{(p_2,0)}{(p_2',0)} \times \mathcal{M}_I(p_1,p_2'; p_0|s^t) \; \cup \\
& \bigcup_{p_0'} \mathcal{M}_I(p_1,p_2,p_0'|s^t) \times \Mtofro{1}{3}{(p_0',1)}{(p_0,1)} \; \cup \\
& \bigcup_{p_0''} \mathcal{M}_{F^0}(p_1,p_2,p_0'' | s^0) \times \Mtofro{1}{3}{(p_0'',0)}{(p_0,1)} \; \cup\\
& \bigcup_{p_1'', p_2''} \Mtofro{1}{2}{(p_1,0)}{(p_1'',1)} \times \Mtofro{2}{3}{(p_2,0)}{(p_2'',1)} \times \mathcal{M}_{F^1}(p_1'',p_2''; p_0|s^1),
\end{align*}
where the unions are taken over $p_1' \in C^{|p_1|+1}(F^0)$, $p_2' \in C^{|p_2|+1}(F^0)$, $p_0' \in C^{|p_0|-1}(F^1)$, $p_0'' \in C^{|p_0|}(F^0)$, $p_1'' \in C^{|p_1|}(F^1)$, and $p_2'' \in C^{|p_2|}(F^1)$, respectively.
\end{prop}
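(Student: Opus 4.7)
The plan is to run the standard Morse-theoretic compactification argument (as in Corollary~\ref{compactification}), but enriched by the two new features introduced by the extra parameter $t \in I$: the vector fields $V_{i,j;3}$ have positive-valued critical points only in the slices $t=0$ and $t=1$, and the common ``vertex'' of a tree can slide all the way to either endpoint of $I$. First I would establish compactness of $\mathcal{M}_I(p_1,p_2;p_0|s^t)$. Using the smoothly varying non-linear support sets $K^t$, the analogue of Lemma~\ref{lem:cpct-set} confines the image of every tree to the compact set $\bigcup_{t \in I}(K^t \times \{t\})$, and the analogue of Lemma~\ref{awayfrom0} keeps the vertex uniformly bounded away from the critical submanifold of $\w{1}{3}{3}^t$. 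An Arzel\`{a}--Ascoli argument in the spirit of \cite[Theorem 2.3]{wehrheim:morse} then extracts from any sequence $\Gamma_n$ a limiting configuration $\Gamma_\infty$ consisting of a finite collection of trajectories.

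Next I would classify the possible limits. Because the positive-valued critical points of each $W_{i,j;3}$ sit only in the slices $t=0$ and $t=1$, any broken subtrajectory must break at one of these two slices. There are two flavors of degeneration. The first is plain \emph{Morse breaking} of exactly one of the three branches: for dimension reasons in the $1$-dimensional case, breaking the $p_1$-branch (necessarily at a critical point $(p_1',0)$), breaking the $p_2$-branch (at $(p_2',0)$), or breaking the $p_3$-branch (at $(p_0',1)$) produces the first three unions in the stated boundary as standard strata inherited from the compactification of each individual Morse trajectory space.

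The second flavor is \emph{vertex sliding}: because the $I$-coordinate along each continuation trajectory is weakly monotone (from the $dt$-piece of $V_{i,j;3}$), the common vertex lies at some time $t_\infty \in [0,1]$ in the limit. If $t_\infty \in (0,1)$ then $\Gamma_\infty$ is still an unbroken tree, hence an interior point. If $t_\infty = 0$, the two incoming branches degenerate to a pair of flow lines of $\w{1}{2}{3}^0$ and $\w{2}{3}{3}^0$ inside $P_3 \times \{0\}$ whose perturbed endpoints match; since the endpoint $s^0$ of the perturbation path was chosen so that $\mathcal{M}_{F^0}(p_1,p_2;p_0''|s^0)$ is cut out transversely for every $p_0''$, this yields an element of that $F^0$-tree space, together with a continuation trajectory from $(p_0'',0)$ to $(p_0,1)$ along $V_{1,3;3}$ --- exactly the fourth union. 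The case $t_\infty = 1$ is symmetric: the outgoing branch collapses onto an $F^1$-tree in $\mathcal{M}_{F^1}(p_1'',p_2'';p_0|s^1)$, preceded by two full continuation trajectories on the incoming branches, producing the fifth union.

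Finally, dimension counts (using the index shift in Lemma~\ref{iotabij}, the Theorem~\ref{cornertrees}-style dimension computation for $\mathcal{M}_I$, and the fact that each additional break drops dimension by one) confirm that every listed stratum is $0$-dimensional and that no simultaneous breaking or mixed degeneration is possible in codimension one. A standard gluing argument --- transverse because of the admissibility perturbation of the path $s^t$ --- shows each configuration is genuinely the limit of a one-parameter family in $\mathcal{M}_I(p_1,p_2;p_0|s^t)$. The main obstacle is handling the vertex-sliding phenomenon carefully: one must check that the gluing of a time-$0$ tree with a continuation trajectory (and symmetrically at $t=1$) is transverse and compatible with $s^t$ near its endpoints. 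This compatibility is precisely why the endpoints $s^0$ and $s^1$ were chosen to be admissible for the unperturbed products \emph{before} choosing the interpolating path $s^t$.
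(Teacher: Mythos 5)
Your proposal is correct and takes essentially the same route as the paper: compactness by confining trees to $\bigcup_{t}K^t\times\{t\}$ and an Arzel\`a--Ascoli argument, followed by a classification of once-broken limits according to which branch breaks and whether the vertex limits to the $t=0$ or $t=1$ slice, with the admissible endpoint choices $s^0,s^1$ identifying the limiting $F^0$- and $F^1$-trees and index counts pinning down the critical points --- this is the paper's five-case analysis in slightly different packaging. The one quibble is your claim that the $I$-coordinate is weakly monotone along continuation trajectories (the $t$-component of $V_{i,j;3}$ contains $\partial_t \w{i}{j}{3}^t$, which can have either sign), but this is inessential since you only use it to extract a limiting vertex time, which follows from compactness of $I$; at the $t=1$ slice the paper instead argues via the vanishing of the $\partial t$ component that an incoming branch whose endpoint reaches $t=1$ must break there.
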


\begin{proof}
Let $\M_I(p_1,p_2; p_0|s^t)$ be of dimension 1. A similar argument as in Section \ref{sec:moduli} gives a compactification of this space by trees with once broken branches. By construction of our vector fields, all critical points of $V_{i,j;3}$ live in $P_3 \times \{0\}$ and $P_3 \times \{1\}$. With three branches that may break at critical points in either of these manifolds, we seemingly have six cases of broken trees that might show up in the boundary of a compactified 1-dimensional moduli space of continuation trees:
\begin{enumerate}
\item The branch flowing from $(p_1,0)$ along $V_{1,2;3}$ breaks in $P_3 \times \{0\}$: This would mean that $p_1$ flows along $\nabla_{g_{1,2;3}} \w{1}{2}{3}^0$ to another critical point $p_1' \in C(F^0)$ with $|p_1'|=|p_1|+1$. An index calculation shows that a tree from $(p_1',0)$ and $(p_2,0)$ to $(p_0,1)$ would be isolated. 
\item In the same way, $(p_2,0)$ could flow along $\nabla_{g_{2,3;3}} \w{2}{3}{3}^0$ to a point $(p_2',0)$ with $p_2' \in C^{|p_2|+1}(F^0)$. Note that the indices force only one edge to break in this way at a time.
\item If the branch flowing along $V_{1,3;3}$ ending at $(p_0,1)$ breaks at a point in $P_3 \times \{1\}$, then the trajectories form a tree from $(p_1,0)$ and $(p_2,0)$ to a critical point $(p_0',1)$, where $p_0' \in C^{|p_0|-1}(F^1)$ and then $p_0'$ flows along $\nabla_{g_{1,3;3}} \w{1}{3}{3}^1$ to $p_0$. 
\item If the branch flowing along $V_{1,3;3}$ to $(p_0,1)$ breaks at $t=0$ at a point $(p_0'',0)$, then we see a tree that must be contained in $P_3 \times \{0\}$. Since $\text{ind}_{V_{1,3;3}}(p_0'',0) = \text{ind}_{V_{1,3;3}}(p_0,1) - 1$, it must be that $p_0'' \in C^{|p_0|}(F^0)$. The tree in $P_3 \times \{0\}$ is in a moduli space $\M_{F^0}(p_1, p_2,p_0''|s^0)$ of flow trees from $F^0$, and our conditions on the endpoints of the path $\{s^t\}$ guarantee that this a manifold of dimension $|p_0''| - |p_1| - |p_2| = 0$. We then see a flow line from $(p_0'',0)$ to $(p_0,1)$, which is in the 0-dimensional continuation moduli space $\M_{1,3;3}((p_0'',0), (p_0,1))$.
%
\item Suppose the branch following $V_{1,2;3}$ from $(p_1,0)$ breaks in $P_3 \times \{1\}$ at a point $(p_1'',1)$. This would imply that $(p_1'',1) \in \Crit_+(V_{1,2;3})$ of index $\ind_{\w{1}{2}{3}^1}(p_1)+1$, so $p_1'' \in C^{|p_1|}(F^1)$. Thus, this is a flow line in the isolated continuation moduli space $\M_{1,2;3}((p_1,0),(p_1'',1)) $. Then we see a tree with the $V_{1,2;3}$ branch contained in $P_3 \times \{1\}$. In particular, the ``midpoint" of the tree (the point in $\Delta(P_3 \times I)^3$) is a point $y \in P_3 \times \{1\}$. This means that the branch $\gamma_2$ of the tree that flows along $V_{2,3;3}$ has finite endpoint $\gamma_2(0)$ in $P_3 \times \{1\}$. Due to the $\partial t$ component of the vector field vanishing as $t \to 1$, this cannot happen in finite time. Thus, the branch flowing along $V_{2,3;3}$ must break at a critical point $(p_2'',1)$ with $p_2'' \in C^{|p_2|}(F^1)$. Then there is a tree from $(p_1'',1)$ and $(p_2'',1)$ to $(p_0,1)$ completely contained in $P_3 \times \{1\}$. This tree lives in a moduli space $\mathcal{M}_{F^1}(p_1'',p_2''; p_0|s^1)$, which, due to the construction of the perturbation path $\{s^t\}$, is a manifold of dimension 0. \qedhere
\end{enumerate}
\end{proof}

\begin{defn}
We define a map $K: C^i(F^0) \otimes C^j(F^0) \to C^{i+j-1}(F^1)$ as follows:
\newline
Given $p_1 \in \crit_+(w^0)$ and $p_2 \in \crit_+(w^0)$, then 
$$K(p_1 \otimes p_2) = \sum ( \#_{\Z_2} \mathcal{M}_I(p_1,p_2;p_0|\{s^t\}) ) \cdot p_0 $$ 
where the sum is taken over $p_0 \in \crit_+(w^1)$ with $|p_0|=|p_1|+|p_2|-1$. Extend the product bilinearly over the tensor product.
\end{defn}

The following Corollary follows directly from the description of the boundary of a compactified one-dimensional continuation flow tree moduli space in Proposition \ref{compconttrees}.

\begin{cor}
The map $K: C^i(F^0) \otimes C^j(F^0) \to C^{i+j-1}(F^1)$ is a chain homotopy, i.e.,
$$  \delta_{1,3;3} \circ K + K \circ (\delta_{1,2;3} \otimes 1 + 1 \otimes \delta_{2,3;3}) = (\Phi_{1,3;3} \circ m_2^0) + m_2^1 \circ (\Phi_{1,2;3} \otimes \Phi_{2,3;3})$$
\begin{equation} \label{eqn:cochainhtpy}
     \xymatrixrowsep{5pc} \xymatrixcolsep{8pc}\xymatrix{
       C(F^0) \otimes C(F^0) \ar@<-.5ex>[d]_{m_2^1 \circ (\Phi_{1,2;3}\otimes \Phi_{2,3;3})}\ar@<.5ex>[d]^{\Phi_{1,3;3}\circ m_2^0}  \ar[r]^{\delta \otimes 1 + 1 \otimes \delta} &C(F^0) \otimes C(F^0) \ar@<-.5ex>[d]_{m_2^1 \circ (\Phi_{1,2;3}\otimes \Phi_{2,3;3})}\ar@<.5ex>[d]^{\Phi_{1,3;3}\circ m_2^0} \ar[ld]_{K}  \\
      C(F^1)  \ar[r]_{\delta} &C(F^1).
      }
    \end{equation}

\end{cor}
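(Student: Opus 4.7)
The plan is to run the standard ``count the boundary of a 1-dimensional moduli space'' argument using Proposition \ref{compconttrees}. Fix $p_1 \in C^i(F^0)$, $p_2 \in C^j(F^0)$, and $p_0 \in C^{i+j}(F^1)$, so that $|p_0| - |p_1| - |p_2| + 1 = 1$ and $\mathcal{M}_I(p_1, p_2; p_0 | s^t)$ is a 1-manifold. Its compactification $\overline{\mathcal{M}}_I(p_1, p_2; p_0 | s^t)$ is a compact 1-manifold with boundary, so the $\mathbb{Z}_2$-count of its boundary points vanishes.

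The next step is to match each of the five unions in the description of $\partial \overline{\mathcal{M}}_I$ with one of the compositions appearing in the chain homotopy equation. First I would identify the two boundary pieces coming from breaking at $t=0$ along the edges following $V_{1,2;3}$ or $V_{2,3;3}$: a once-broken trajectory in $P_3 \times \{0\}$ from $(p_k,0)$ to $(p_k',0)$ contributes to $\delta^0$ applied to $p_k$, and the remaining continuation tree from $(p_1',0), (p_2,0)$ (resp. $(p_1,0), (p_2',0)$) to $(p_0,1)$ is an element of a 0-dimensional $\mathcal{M}_I$ space, so these counts give exactly the coefficients of $K \circ (\delta_{1,2;3} \otimes 1 + 1 \otimes \delta_{2,3;3})(p_1 \otimes p_2)$. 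The piece where the edge along $V_{1,3;3}$ breaks at $t=1$ yields a 0-dimensional continuation tree to $(p_0',1)$ followed by a trajectory of $\w{1}{3}{3}^1$ from $p_0'$ to $p_0$, contributing $\delta_{1,3;3} \circ K(p_1 \otimes p_2)$.

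The remaining two boundary pieces are the geometrically interesting ones that produce the terms on the right-hand side. Breaking of the $V_{1,3;3}$ edge at $t=0$ leaves a 0-dimensional $F^0$-flow tree $\mathcal{M}_{F^0}(p_1,p_2; p_0'' | s^0)$ glued to a continuation trajectory from $(p_0'',0)$ to $(p_0,1)$; this count is exactly $\Phi_{1,3;3} \circ m_2^0(p_1 \otimes p_2)$. The final case, breaking of the $V_{1,2;3}$ and $V_{2,3;3}$ edges simultaneously at $t=1$ (forced, as discussed in the proof of Proposition \ref{compconttrees}, by the vanishing of the $\partial_t$ component of the continuation vector field), leaves two continuation trajectories from $(p_k,0)$ to $(p_k'',1)$ concatenated with a 0-dimensional $F^1$-tree $\mathcal{M}_{F^1}(p_1'',p_2''; p_0 | s^1)$; summing gives $m_2^1 \circ (\Phi_{1,2;3} \otimes \Phi_{2,3;3})(p_1 \otimes p_2)$.

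Setting the total mod 2 count to zero and rearranging yields the stated chain homotopy identity. The one point requiring care is the bookkeeping of the indices in each broken configuration to be sure that the correct Morse/continuation moduli space is actually 0-dimensional (and hence its $\mathbb{Z}_2$-count is what appears in the relevant map); this is the main thing I would check explicitly, but it is purely a matter of applying Equation \eqref{eqn:index} together with the dimension formula for $\mathcal{M}_I$ case by case, since everything transverse and compact has already been arranged by Proposition \ref{compconttrees}. No further analytic input is needed.
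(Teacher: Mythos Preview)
Your proposal is correct and is exactly the argument the paper has in mind: the paper states only that the corollary ``follows directly from the description of the boundary of a compactified one-dimensional continuation flow tree moduli space in Proposition~\ref{compconttrees},'' and what you have written is precisely that boundary count, with each of the five unions matched to one term of the chain homotopy identity. Your proposal is in fact more explicit than the paper's own treatment.
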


The following results follow from the fact that $K$ is a chain homotopy:

\begin{thm}\label{inviso}
Let $\leg_t \subset J^1M, t \in [0,1]$ be isotopy of Legendrian submanifolds, and suppose $\leg_0$ has a linear-at-infinity generating family. Then for $F^0$ and $F^1$ guaranteed by Proposition \ref{prop:leg-persist}, the following diagram commutes:
\begin{equation} \label{eqn:legiso}
      \xymatrix{
       GH^*(F^0) \otimes GH^*(F^0) \ar[d]_{\cong}  \ar[r]^{ \quad\quad \mu_2^0} &GH^*(F^0) \ar[d]^{\cong} \\
       GH^*(F^1) \otimes GH^*(F^1) \ar[r]^{\quad \quad \mu_2^1} &GH^*(F^1).
      }
    \end{equation}
\end{thm}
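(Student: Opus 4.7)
The plan is to deduce the commutativity of the diagram immediately from the chain homotopy property of $K$ established in the corollary just above the theorem. First, I would identify the unlabeled vertical isomorphisms in the target diagram: the right vertical arrow is the continuation isomorphism $\Phi_{1,3;3}^*$, and the left vertical arrow is $\Phi_{1,2;3}^*\otimes \Phi_{2,3;3}^*$, where each $\Phi_{i,j;3}^*$ is the map induced on $GH^*$ by the admissible path $\Gamma_{i,j;3} = \{(\w{i}{j}{3}^t,g_{i,j;3}^t)\}$ constructed in the preceding subsection. That each $\Phi_{i,j;3}^*$ is an isomorphism follows by running the four-step recipe used in the proof of Proposition \ref{metricinvrgh} — cochain map property, homotopy-of-paths chain homotopy, concatenation chain homotopy, constant-path identity — with the vector field $V_{i,j;3}$ in place of $\nabla_G W$. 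The fact that $\w{i}{j}{3}^t$ now depends nontrivially on $t$ does not affect those arguments, because the relevant trajectories still live in a compact subset of $P_3\times I$ by the smooth variation of the non-linear support sets $K^t$.

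Next, I would invoke the chain homotopy relation established in the corollary immediately preceding the theorem:
$$\delta_{1,3;3}\circ K + K\circ(\delta_{1,2;3}\otimes 1 + 1\otimes \delta_{2,3;3}) = \Phi_{1,3;3}\circ m_2^0 + m_2^1\circ(\Phi_{1,2;3}\otimes \Phi_{2,3;3}).$$
Since chain homotopic maps induce the same map on cohomology, passing to cohomology yields
$$\Phi_{1,3;3}^*\circ \mu_2^0 = \mu_2^1\circ (\Phi_{1,2;3}^*\otimes \Phi_{2,3;3}^*),$$
which is precisely the commutativity claim of the theorem.

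The main obstacle in the whole argument is therefore not the present theorem but the earlier boundary analysis $\overline{\mathcal{M}}_I(p_1,p_2;p_0\,|\,s^t)$ in Proposition \ref{compconttrees}. The hardest step of that analysis is verifying that the five enumerated families of broken trees exhaust all degenerations of a 1-parameter family of unbroken continuation trees. This combines compactness arguments in the style of Theorem \ref{cornertrees} with the key observation that the $\partial_t$ component of each $V_{i,j;3}$ vanishes at $t=0,1$, forcing any branch whose finite endpoint approaches a slice $P_3\times\{0\}$ or $P_3\times\{1\}$ to actually break at a critical point in that slice rather than escaping in finite parameter time. Once that proposition is in hand, the theorem is a formal consequence at the cohomology level.
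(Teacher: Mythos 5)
Your proposal is correct and follows essentially the same route as the paper: the paper also takes the vertical maps to be the continuation isomorphisms $\Phi_{i,j;3}^*$ (shown to be isomorphisms by the argument of Proposition \ref{metricinvrgh}) and deduces Theorem \ref{inviso} directly from the chain homotopy corollary by passing to cohomology, with all the real work residing in Proposition \ref{compconttrees} and the construction of $K$.
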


Since paths of metrics and perturbations appeared in the construction of continuation flow trees, the resulting chain homotopy also shows the following two results of invariance.

\begin{cor}\label{indepmetric}
The construction of $\mu_2$ does not depend on choice of metrics from $\mathcal{G}_F$ and $\mathcal{G}_Q$ in Definition \ref{EMS} used in the gradient vector fields.
\end{cor}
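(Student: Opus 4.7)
The plan is to reuse the continuation flow tree machinery from the previous subsection, but applied to a \emph{trivial} Legendrian isotopy with a nontrivial path of metrics. Fix a generating family $F$ and two choices of metrics $g_{i,j;3}^0$ and $g_{i,j;3}^1$, each built from data in $\mathcal{G}_F$ and $\mathcal{G}_Q$ as in Definition \ref{EMS}. Choose the constant path $F^t = F$, $t \in [0,1]$, together with smooth paths $g_{i,j;3}^t$ connecting the two metric choices inside the space of metrics that are standard outside the (now $t$-independent) non-linear support $K$. Also pick a smooth path $s^t = (s_1^t, s_2^t, s_3^t)$ of perturbation vectors connecting an admissible $s^0$ (making $\mu_2^0$ defined) to an admissible $s^1$ (making $\mu_2^1$ defined). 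Generic perturbation of these three paths of metrics and of the path $\{s^t\}$ makes each path $\Gamma_{i,j;3} = \{(\w{i}{j}{3}, g_{i,j;3}^t)\}$ admissible and achieves transversality of the perturbed triple evaluation map with $\Delta(P_3\times I)^3$.

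Next, I would form the moduli space $\mathcal{M}_I(p_1,p_2;p_0 \mid s^t)$ of continuation flow trees exactly as in the previous subsection. Because $F^t = F$ does not vary, the uniform bounds of Lemmas \ref{lem:cpct-set} and \ref{awayfrom0} go through verbatim with the single non-linear support $K$ and the single least positive critical value $\rho$, so the compactness argument and boundary description of Proposition \ref{compconttrees} apply without modification. Counting isolated continuation trees defines $K: C^i(F)\otimes C^j(F) \to C^{i+j-1}(F)$, and the boundary analysis of the one-dimensional $\overline{\mathcal{M}}_I(p_1,p_2;p_0 \mid s^t)$ gives the chain homotopy identity
\begin{equation*}
\delta^1 \circ K + K \circ (\delta^0 \otimes 1 + 1 \otimes \delta^0) = \Phi_{1,3;3} \circ m_2^0 + m_2^1 \circ (\Phi_{1,2;3} \otimes \Phi_{2,3;3}),
\end{equation*}
where $m_2^0$ and $m_2^1$ are the chain-level products built from the metrics $g^0$ and $g^1$, and the $\Phi_{i,j;3}$ are the continuation cochain maps associated to the three paths $\Gamma_{i,j;3}$.

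Passing to cohomology yields $\Phi_{1,3;3}^* \circ \mu_2^0 = \mu_2^1 \circ (\Phi_{1,2;3}^* \otimes \Phi_{2,3;3}^*)$. The key remaining point is that each $\Phi_{i,j;3}^*$ is the \emph{canonical} metric-independence isomorphism of Proposition \ref{metricinvrgh}: the path $\Gamma_{i,j;3}$ has constant extended difference function and varies only in the metric, so by Lemmas \ref{pathhtpy}, \ref{concathtpy}, and \ref{constpath} the induced map on $GH^*$ depends only on the path-homotopy class of $g_{i,j;3}^t$, which is trivial since the space of compatible metrics that are Euclidean outside $K$ is path-connected (and contractible after the standard $L^2$-small Smale perturbation). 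Under the canonical identifications of $GH^*(F)$ with itself given by Proposition \ref{metricinvrgh}, each $\Phi_{i,j;3}^*$ is therefore the identity, so $\mu_2^0 = \mu_2^1$ on $GH^*(F)$.

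The main obstacle is the final verification that the three continuation isomorphisms $\Phi_{i,j;3}^*$ coincide with the single canonical metric-independence isomorphism used to identify $GH^*(F)$ with itself. This requires matching the continuation map built from $(\w{i}{j}{3}, g_{i,j;3}^t)$ on $P_3$ with the continuation map for $(w, g^t)$ on $M \times \R^{2N}$ under the bijection $\iota_{i,j;3}$ of Lemma \ref{iotabij}; since the metrics split as in Lemma \ref{gradsplit} and $Q$ has a unique critical point, the same splitting argument used in Proposition \ref{Smaleprop} and in the proof of Lemma \ref{lem-stab} identifies flow lines of $V_{i,j;3}$ with products of flow lines of $\nabla w$ and the constant trajectory at $0_Q$, so the two continuation counts agree. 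Once this identification is established, the commutativity above forces $\mu_2^0 = \mu_2^1$, proving the corollary.
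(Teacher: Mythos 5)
Your proposal takes essentially the same route as the paper: the paper obtains this corollary directly from the Section \ref{sec:inviso} continuation-flow-tree chain homotopy, applied with a constant path of generating families but varying paths of metrics and perturbation vectors, which is exactly your construction and yields the same commutative diagram on cohomology. Your additional final step, identifying each $\Phi_{i,j;3}^*$ with the canonical metric-independence isomorphism of Proposition \ref{metricinvrgh} via the split-metric/flow-line identification, goes beyond what the paper records (and would need care about achieving admissibility within split paths), but it is not needed for the invariance statement as the paper uses it.
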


\begin{cor}\label{indeppert}
The construction of $\mu_2$ does not depend on choice of perturbation $s$ used to achieve transversality in $\modsp$.
\end{cor}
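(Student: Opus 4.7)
The plan is to specialize the continuation-tree machinery from Theorem~\ref{inviso} to a setting in which only the perturbation parameter is allowed to vary. Let $s^0, s^1 \in S^3$ be two choices of perturbation, each achieving transversality in $\modsp$, and let $m_2^{s^0}, m_2^{s^1}$ be the resulting cochain-level products as in Definition~\ref{productmap}. Since each factor of $S$ is an open ball, $S^3$ is convex, and I will pick a smooth path $\{s^t\}_{t \in [0,1]}$ in $S^3$ joining $s^0$ to $s^1$. I will then run the continuation-tree construction from Section~\ref{sec:inviso} with the constant path of Legendrians $\Lambda^t \equiv \Lambda$, the constant lift $F^t \equiv F$, and the constant choice of metrics $g^t_{i,j;3} \equiv g_{i,j;3}$, but with this nontrivial perturbation path $\{s^t\}$. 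This produces continuation-tree moduli spaces $\mathcal{M}_I(p_1, p_2; p_0 \mid s^t) \subset P_3 \times I$ analogous to those in Proposition~\ref{compconttrees}.

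The heart of the argument is that Lemma~\ref{constpath}, applied componentwise to each of the constant difference-function-and-metric paths $\Gamma_{i,j;3}$, identifies each continuation map $\Phi_{i,j;3}$ with the identity on cochains. A $\Z_2$ count of isolated elements of $\mathcal{M}_I(p_1, p_2; p_0 \mid s^t)$ then defines a map $K : \V{i} \otimes \V{j} \to \V{i+j-1}$, and the chain-homotopy relation associated to one-dimensional continuation-tree moduli spaces collapses in this setting to
\begin{equation*}
\delta \circ K + K \circ (\delta \otimes 1 + 1 \otimes \delta) = m_2^{s^0} + m_2^{s^1},
\end{equation*}
which exhibits $m_2^{s^0}$ and $m_2^{s^1}$ as cochain-homotopic and hence as inducing equal maps $\mu_2$ on $GH^*(F)$.

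The step I expect to be the main obstacle is establishing transversality for the perturbed triple evaluation $E$ attached to the moving perturbation $\{s^t\}$ without the freedom to perturb the underlying functions or metrics. The strategy will mirror Theorem~\ref{cornertrees}: at each slice $P_3 \times \{t\}$ the three independent perturbations $s^t_1, s^t_2, s^t_3$ translate the finite endpoints of the three half-trajectories and, taken jointly, give a submersion onto $(P_3)^3$, so a generic $C^\infty$-small perturbation of the path $\{s^t\}$ with endpoints $s^0, s^1$ fixed should make $E$ transverse to $\Delta(P_3 \times I)^3$ together with each of its corner strata. Compactness of the resulting moduli spaces would then follow, exactly as in Theorem~\ref{cornertrees} and Proposition~\ref{compconttrees}, from Lemma~\ref{lem:cpct-set} and a slice-wise version of Lemma~\ref{awayfrom0} applied uniformly in $t$. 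Once these analytic ingredients are in place, the boundary analysis of Proposition~\ref{compconttrees} applies verbatim, yielding the desired chain homotopy and completing the argument.
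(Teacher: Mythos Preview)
Your proposal is correct and follows essentially the same approach as the paper: the paper derives this corollary directly from the continuation-tree machinery of Section~\ref{sec:inviso}, noting that a path of perturbations $\{s^t\}$ is already built into that construction, so specializing to constant $F^t$, $g^t_{i,j;3}$ (whence $\Phi_{i,j;3} = \id$ by Lemma~\ref{constpath}) yields the chain homotopy between $m_2^{s^0}$ and $m_2^{s^1}$. Your treatment is in fact more explicit than the paper's, which simply asserts the corollary as an immediate consequence.
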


\bibliographystyle{amsplain} 
\bibliography{main}

\end{document}